\newtheorem{theorem}{Theorem}
\newtheorem{proposition}{Proposition}[section]
\newtheorem{lemma}[proposition]{Lemma}
\newtheorem{corollary}[proposition]{Corollary}
\theoremstyle{definition}
\newtheorem{definition}[proposition]{Definition}
\newtheorem{example}[proposition]{Example}
\theoremstyle{remark}
\newtheorem{remark}[proposition]{Remark}
\DeclarePairedDelimiter{\norm}{\lVert}{\rVert}
\DeclarePairedDelimiter{\abs}{\lvert}{\rvert}
\DeclarePairedDelimiterXPP{\evalAt}[2]{}{.}{\rvert}{_{#1}}{#2}
\newcommand{\Liederivative}[1]{\mathcal{L}_{#1}}
\newcommand{\contractwith}[1]{\iota_{#1}}
\newcommand{\dif}{\mathop{}\!d}
\DeclareMathOperator{\divergence}{div}
\DeclareMathOperator{\trace}{tr}
\DeclareMathOperator{\HodgeStar}{\star}
\newcommand{\tensorproduct}{\otimes}
\newcommand{\identityOperator}{1}
\newcommand{\Aut}{\mathrm{Aut}}
\newcommand{\actsOn}{\curvearrowright}
\newcommand{\odfrac}[2]{\frac{\dif #1}{\dif #2}}
\newcommand{\Liegroup}[1]{\mathrm{#1}}
\newcommand{\Liealgebra}[1]{\mathfrak{#1}}
\newcommand{\Reebfield}{R}		% To decide later on R, \xi, ...
\newcommand{\emptyslot}{\cdot}	% For writing e.g. g(-,-)
\newcommand{\difalphasharp}{\dif \alpha^+}
\newcommand{\dtFactor}{\tau}	% alpha = \dtFactor dt on mapping tori
\newcommand{\modelLieGroup}{\Liegroup{Sol}}
\newcommand{\modelLieAlgebra}{\Liealgebra{sol}}
\title{Cosymplectic Chern--Hamilton conjecture}
\author[S. Dyhr]{Søren Dyhr}
\address{
	Søren Dyhr, Laboratory of Geometry and Dynamical Systems, Universitat Po\-li\-tèc\-ni\-ca de Ca\-ta\-lunya, Avinguda del Doctor Marañon 44-50, Barcelona 08028, Spain
	and
	Centre de Recerca Matemàtica, Edifici C, Campus Bellaterra, 08193 Bellaterra, Spain.
}
\email{soren.dyhr@upc.edu}
\author[Á. González-Prieto]{Ángel González-Prieto}
\address{ Ángel González-Prieto, Departamento de Álgebra, Geometría y Topología, Facultad de CC.\ Matemáticas, Universidad Complutense de Madrid, 28040 Madrid, Spain and Instituto de Ciencias Matemáticas (CSIC-UAM-UCM-UC3M),
28049 Madrid, Spain.}
\email{angelgonzalezprieto@ucm.es}
\author[E. Miranda]{Eva Miranda}
\address{Eva Miranda, Laboratory of Geometry and Dynamical Systems and SYMCREA, Department of Mathematics, EPSEB, Universitat Politècnica de Catalunya-IMTech in Barcelona and
\\
CRM Centre de Recerca Matemàtica, Campus de Bellaterra Edifici C, 08193 Bellaterra, Barcelona.}
\email{eva.miranda@upc.edu}
\author[D. Peralta-Salas]{Daniel Peralta-Salas}
\address{Daniel Peralta-Salas, Instituto de Ciencias Matemáticas, Consejo Superior de Investigaciones Científicas, 28049 Madrid, Spain.}
\email{dperalta@icmat.es}
\thanks{S.\ Dyhr received the support of a fellowship from the ”la Caixa” Foundation (ID 100010434) with fellowship code LCF/BQ/DI23/11990085. Á.\ González-Prieto is supported by the Spanish State Research Agency project PID2021-124440NB-I00. E.\ Miranda is supported by the Catalan Institution for Research and Advanced Studies via an ICREA Academia Prize 2021. D.\ Peralta-Salas is supported by the grants RED2022-134301-T and PID2022-136795NB-I00 funded by MCIN/AEI/10.13039/501100011033.  S.\ Dyhr and E.\ Miranda are supported by the Spanish State Research Agency project PID2023-146936NB-I00 funded by MICIU/AEI/10.13039/501100011033. \'A.\ Gonz\'alez-Prieto and D.\ Peralta-Salas are supported by the project CEX2023-001347-S. 
The last three authors are supported by the Bilateral AEI-DFG project: Celestial Mechanics, Hydrodynamics, and Turing Machines with reference codes PCI2024-155042-2 and PCI2024-155062-2. All authors are partially supported by the project “Computational, dynamical and geometrical complexity in fluid dynamics” (COMPLEXFLUIDS), Ayudas Fundación BBVA a Proyectos de Investigación Científica 2021.
This work is supported by the Spanish State Research Agency, through the Severo Ochoa and María de Maeztu Program for Centers and Units of Excellence in R\&D (CEX2020-001084-M). We thank CERCA Programme/Generalitat de Catalunya for institutional support.}
\begin{document}
\begin{abstract}
	 In this paper, we study the Chern--Hamilton energy functional on compact cosymplectic manifolds, fully classifying in dimension~$3$ those manifolds admitting a critical compatible metric for this functional.
	This is the case if and only if either the manifold is co-Kähler or if it is a mapping torus of the 2-torus by a hyperbolic toral automorphism and equipped with a suspension cosymplectic structure.
	Moreover, any critical metric has minimal energy among all compatible metrics.
	We also exhibit examples of manifolds with first Betti number $b_1 \geq 2$ admitting cosymplectic structures, but such that no cosymplectic structure admits a critical compatible metric.
\end{abstract}

\maketitle

\section{Introduction}
\label{sec:introduction}

The pursuit of optimal metrics through energy minimization has long been a central theme in differential geometry, linking the structure of geometric spaces to deeper analytic and topological properties of manifolds.
A significant milestone in this direction is Calabi’s introduction of extremal metrics in Kähler geometry~\cite{calabi1982extremal}, defined as critical points of the Calabi functional, which measures the deviation of a metric’s scalar curvature from being constant  within a fixed Kähler class.
Earlier, in the 1950s, Calabi formulated what became known as the Calabi conjecture~\cite{calabi1954space, calabi1957kahler,tian2000canonical}, which was solved by Yau in 1978~\cite{yau1977calabi}.
The conjecture concerns the existence of Kähler metrics with prescribed Ricci curvature on compact Kähler manifolds, and Yau’s solution established the existence of Calabi–Yau metrics—Ricci-flat Kähler–Einstein metrics that arise naturally as minimizers of Calabi’s functional.
In parallel, the Yamabe problem~\cite{yamabe1960deformation, schoen1984conformal} seeks canonical metrics within a conformal class by minimizing a curvature-related functional, ultimately yielding metrics of constant scalar curvature.

Following this tradition, Chern and Hamilton~\cite{chernRiemannianMetricsAdapted1985} studied an energy functional on the space of metrics compatible with a contact structure.
They conjectured that, when restricted to contact \( 3 \)-manifolds whose Reeb field induces a Seifert fibration, energy-minimizing metrics always exist.
Since then, the study of this problem has attracted significant attention.
Their conjecture was proved to be true~\cite{rukimbiraChernHamiltonsConjectureKcontactness}, and a generalized version was later posed~\cite{hozooriDynamicsTopologyConformally2020}, asking whether minimizing metrics exist on all contact \( 3 \)-manifolds.
Recent work~\cite{hozooriAnosovContactMetrics2023,mitsumatsuExistenceCriticalCompatible2025} has disproved this generalized conjecture.
In fact, when studying the functional on general contact manifolds, even the existence of compatible metrics which are just critical points of the functional is very restricted, and it is even possible to classify all contact \( 3 \)-manifolds that admit critical compatible metrics~\cite{hozooriAnosovContactMetrics2023,mitsumatsuExistenceCriticalCompatible2025}.

In this article we will study the Chern-Hamilton problem in the cosymplectic setting, particularly analyzing critical compatible metrics on cosymplectic manifolds.
The cosymplectic case can be understood as a degenerate geometric structure that lies at the ``boundary'' of contact structures.
In fact, both contact and cosymplectic structures can be regarded as opposite extreme cases of confoliations~\cite{eliashberg1998confoliations,geiges2008}.

A cosymplectic structure on a manifold $M$ of dimension~\( 2n + 1 \geq 3 \) consists of a closed \( 1 \)-form \( \alpha \) and a closed \( 2 \)-form \( \beta \) such that \( \alpha \wedge \beta^n \) is a volume form.
It uniquely defines the so-called Reeb vector field \( \Reebfield \) characterized by \( \alpha(\Reebfield) = 1 \) and \( \beta(\Reebfield, \emptyslot) = 0 \).
Given a fixed cosymplectic structure, one can study Riemannian metrics $g$ that are compatible with it, in a similar vein to the contact case; see \cref{def:compatible-metric} for details.

Metrics compatible with a given cosymplectic structure always exist, but they are far from unique.
In this direction, the functional which Chern and Hamilton~\cite{chernRiemannianMetricsAdapted1985} studied for contact structures aimed to minimize the \emph{scalar torsion} \( \norm{\Liederivative{\Reebfield}g}^2 \), obtained from the Lie derivative $\Liederivative{\Reebfield}g$ of a compatible metric in the direction of the Reeb field.
Adapting these ideas to the framework of cosymplectic manifolds, we propose the following version of their functional:
\begin{equation*}
	E(g)
	:=
	\int_M
	\norm{\Liederivative{\Reebfield} g}^2
	\,
	\alpha \wedge \beta^n
	,
\end{equation*}
where the functional is defined on the space of metrics \( g \) compatible with the fixed cosymplectic structure \( (\alpha, \beta) \) on \( M \).
We will refer to this as the \emph{Chern--Hamilton energy functional} in the setting of cosymplectic manifolds.
An obvious class of metrics that minimize this functional are those for which the Reeb vector field is Killing, as they have zero energy.
In dimension~\( 3 \) this is equivalent to the manifold being \emph{co-Kähler}, that is, it is a cosymplectic manifold satisfying certain integrability condition~\cite{goldbergIntegrabilityAlmostCosymplectic1969}, see \cref{sec:preliminaries-metrics} for details.

Our goal in this work is to study \emph{critical compatible metrics}, that is, compatible metrics on a cosymplectic manifold that are critical points of the Chern--Hamilton energy functional.
Analogously to the classification of contact $3$-manifolds that admit critical compatible metrics~\cite{hozooriAnosovContactMetrics2023,mitsumatsuExistenceCriticalCompatible2025}, in this paper we fully classify which \( 3 \)-dimensional cosymplectic manifolds admit critical compatible metrics.
Our main result is the following:

\begin{theorem}
	Let \( (M, \alpha, \beta) \) be a compact, connected, \( 3 \)-dimensional cosymplectic manifold and let \( \Reebfield \) be its associated Reeb vector field.
	Then a compatible metric \(g\) is critical for the Chern--Hamilton energy functional if and only if either
   \begin{enumerate}
	   \item \( \Liederivative{\Reebfield}g = 0 \), so the structure is co-Kähler, or,
	   \item \(M\) is cosymplectomorphic to a suspension cosymplectic structure on a symplectic mapping torus of the $2$-torus by a hyperbolic toral automorphism and is equipped with the metric described in \cref{ex:critical-metric-mapping-torus}.
	   In this case \(\norm{\Liederivative{\Reebfield}g}^2 \) is a positive constant.
   \end{enumerate}
   Additionally, any critical metric minimizes the energy among all compatible metrics.
   \label{thm:classification}
\end{theorem}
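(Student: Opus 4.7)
The plan is threefold: derive the Euler--Lagrange equation for $E$ on the space of compatible metrics, extract from it a dynamical invariant of the Reeb flow, and then combine that invariant with the structure theory of compact $3$-dimensional cosymplectic manifolds to identify $M$.

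Set $\tau := \Liederivative{\Reebfield} g$. Because $\alpha$ and $\beta$ are closed while $\alpha(\Reebfield) = 1$ and $\beta(\Reebfield, \emptyslot) = 0$, both forms are Reeb-invariant; compatibility then forces $\tau(\Reebfield, \emptyslot) = 0$, and since $\beta|_{\ker\alpha}$ is an $\Reebfield$-invariant volume form for $g|_{\ker\alpha}$, $\tau$ is trace-free on $\xi := \ker\alpha$. The admissible variations $h$ of compatible metrics satisfy the same algebraic constraints. Taking the first variation of $E$, using that $\Reebfield$ preserves $\alpha \wedge \beta$, and integrating the Lie-derivative term by parts, I would expect the criticality condition to reduce to $\Liederivative{\Reebfield}\tau = 0$, any algebraic correction coming from differentiating $g^{-1}$ vanishing thanks to the trace-freeness and symmetry of $\tau$ and $h$ on the $2$-plane $\xi$.

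This invariance forces a sharp dichotomy. Either $\tau \equiv 0$, so $\Reebfield$ is Killing and the structure is co-Kähler by \cite{goldbergIntegrabilityAlmostCosymplectic1969}, giving case~(1). Otherwise $\norm{\tau}_g^2$ is constant along each Reeb orbit; combining this with the fact that $\tau$ lies in the rank-$2$ bundle of trace-free symmetric tensors on $\xi$, I would argue via a continuity and invariance argument that $\tau$ is nowhere zero and $\norm{\tau}$ is a positive constant on $M$. Pointwise, $\tau$ then splits $\xi = E^+ \oplus E^-$ into two smooth $\Reebfield$-invariant line fields with opposite constant eigenvalues $\pm\lambda$, $\lambda > 0$. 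Since $\tau$ is the symmetrized derivative of $g|_\xi$ along $\Reebfield$, the line fields $E^\pm$ are uniformly expanded and contracted, so $\Reebfield$ generates a smooth Anosov flow with smooth splitting $TM = \mathbb{R}\,\Reebfield \oplus E^+ \oplus E^-$.

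To pin down $M$, I would exploit the closed nonvanishing $1$-form $\alpha$ together with the Anosov data: $\alpha$ cuts out a codimension-$1$ foliation by closed symplectic leaves preserved by $\Reebfield$, and a Tischler-type argument compatible with the cosymplectic structure realizes $M$ as a fiber bundle over $S^1$ whose Reeb flow is a suspension. The fiber is a closed orientable surface carrying two transverse smooth line fields uniformly rescaled by the first-return map, which by Franks--Newhouse forces the fiber to be $T^2$ and the monodromy to be a hyperbolic toral automorphism, thereby identifying the cosymplectic structure with the one in \cref{ex:critical-metric-mapping-torus}. The main obstacle I anticipate is exactly this rigidity step, which requires promoting the smoothness of $E^\pm$ and the constancy of $\lambda$ to a linear model on $T^2$. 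Finally, the minimality statement should follow from a pointwise Cauchy--Schwarz-type lower bound $\norm{\Liederivative{\Reebfield} g'}^2 \geq \lambda^2$ for any compatible metric $g'$, obtained from the trace-freeness of $\Liederivative{\Reebfield}g'$ on $\xi$ and integrated against $\alpha \wedge \beta$ to compare energies.
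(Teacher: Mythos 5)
Your outline follows the same broad arc as the paper (Euler--Lagrange equation, constancy of the torsion, Anosov structure, identification of $M$, minimality), but several of the individual steps are wrong or unsupported. The most serious is the Euler--Lagrange equation itself: the critical point condition is $\nabla_{\Reebfield}\Liederivative{\Reebfield}g = 0$ (covariant derivative), not $\Liederivative{\Reebfield}\bigl(\Liederivative{\Reebfield}g\bigr) = 0$. The integration by parts in the first variation must be done with $\nabla_{\Reebfield}$, because $\Reebfield\bigl(g(S,T)\bigr) = g(\nabla_{\Reebfield}S, T) + g(S, \nabla_{\Reebfield}T)$ holds for the covariant but not the Lie derivative (the inner product $g$ is not flow-invariant precisely when $\tau \neq 0$), and the difference $\Liederivative{\Reebfield}\tau - \nabla_{\Reebfield}\tau = \tau(\nabla_{\emptyslot}\Reebfield,\emptyslot)+\tau(\emptyslot,\nabla_{\emptyslot}\Reebfield)$ does not vanish: for the model metric of \cref{ex:critical-metric-mapping-torus} one computes $\Liederivative{\Reebfield}\tau$ is a nonzero positive multiple of $g|_{\ker\alpha}$, so the genuine positive-torsion critical metrics fail your proposed equation and your classification would miss case~(2) entirely. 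Second, the jump from ``$\norm{\tau}^2$ is a first integral'' to ``$\norm{\tau}^2$ is a positive constant on $M$'' is not a continuity argument: a priori the torsion could vary across orbits or vanish on a closed invariant set. The paper's route is to show that each compact invariant set where the torsion is positive is a hyperbolic set with smooth one-dimensional splitting, take a regular level set of the torsion via Sard's theorem, and invoke the theorem of \cite{aguilarNonexistenceCodimensionOne2005} that codimension-one Anosov flows admit no invariant surfaces. Some such global dynamical input is indispensable here.

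Two further points. Your minimality argument cannot work pointwise: for an arbitrary compatible metric $\tilde g$ written in the critical orthonormal frame with coefficients $p, q, r$ (with $pq - r^2 = 1$), one finds $\norm{\Liederivative{\Reebfield}\tilde g}^2 = 8\mu^2 + (\text{nonnegative terms}) + 8\mu\,\Reebfield(\ln p)$, and the last term is of indefinite sign; the inequality $E(\tilde g)\ge E(g)$ only emerges after integrating against $\alpha\wedge\beta$ and using that $\Reebfield$ is divergence-free. Finally, the rigidity step you flag as ``the main obstacle'' is indeed where the real work lies, and Tischler plus Franks--Newhouse only yields a topological conjugacy to a hyperbolic toral suspension, not the cosymplectomorphism-and-isometry the theorem asserts. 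The paper closes this gap differently: criticality gives the bracket relations $[\Reebfield, v_{\pm}] = \pm\mu v_{\pm}$ for the eigenframe, a separate compactness argument forces $[v_+, v_-] = 0$, and the resulting local $\modelLieAlgebra$-structure is integrated on the universal cover to identify $\tilde M$ with $\modelLieGroup$ carrying its left-invariant structures, after which the cocompact quotients are classified explicitly. Without establishing $[v_+,v_-]=0$ (or some equivalent rigidity), your argument does not reach the stated conclusion.
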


Notice that, by \cite{bazzoniStructureCoKahlerManifolds2014}, all compact $3$-dimensional co-Kähler manifolds are the product of a compact surface with a circle up to a finite cover.
It is also worth mentioning that the manifolds of case (2) of \cref{thm:classification} actually admit several equivalent presentations, as described in \cref{ex:critical-metric-mapping-torus} and \cref{ex:critical-metric-Sol-quotient}.
In particular, the manifolds admitting critical metrics with positive torsion as well as their cosymplectic and metric structure can be described explicitly, see \cref{sec:critical-metrics-construction}.

As a consequence of our main result, we also show that there exist manifolds that, although they admit cosymplectic structures, they do not fall into either of the two types described in \cref{thm:classification} for any cosymplectic structure on these manifolds; see \cref{sec:no-critical-metrics-construction}.
In other words, there exist manifolds that admit cosymplectic structures but do not admit any cosymplectic structure with a critical metric.

The strategy to prove \cref{thm:classification} is the following.
First, we will show that a compatible metric $g$ is critical if and only if it satisfies the Euler--Lagrange equation
\begin{equation*}
	\nabla_{\Reebfield} \Liederivative{\Reebfield}g
	=
	0
    .
	\label{eq:critical-point-condition}
\end{equation*}
This extends to the cosymplectic setting the critical point condition for the Chern--Hamilton functional obtained by Tanno~\cite{tannoVariationalProblemsContact1989} in the contact case.
In fact, we calculate the critical point condition in slightly greater generality than is necessary for \cref{thm:classification}, see \cref{sec:functional} for the details.

A second key ingredient in the proof of \cref{thm:classification} is the use of \emph{algebraic Anosov flows}.
These are a special class of Anosov flows (see \cref{def:algebraic-anosov,def:hyperbolic-set-and-Anosov}), arising on compact smooth quotients of the Lie groups \( \modelLieGroup \) and \( \widetilde{\Liegroup{SL}}(2, \mathbb{R}) \), the universal cover of \( \Liegroup{SL}(2, \mathbb{R}) \).
In fact, up to smooth parameter change, these are the only smooth, volume-preserving Anosov flows with smooth stable and unstable directions on compact \( 3 \)-manifolds~\cite{ghysDeformationsFlotsDAnosov1992}.

In this direction, we show (\cref{thm:stable-unstable-directions-as-kernels-of-global-tensors,thm:critical-metric-Reeb-is-algebraic-Anosov}) that the critical metrics with positive torsion on cosymplectic manifolds are Anosov with smooth stable and unstable directions and that they in fact are algebraic Anosov flows of \( \modelLieGroup \) type (up to reparametrization).

To conclude this introduction, we observe several parallels between our classification results for critical metrics in the cosymplectic case and those in the contact setting, as detailed in~\cite{hozooriAnosovContactMetrics2023,mitsumatsuExistenceCriticalCompatible2025}.
In both frameworks, the scalar torsion $\norm{\Liederivative{\Reebfield}g}^2$ is everywhere constant for critical metrics, leading to the following dichotomy (see \cref{tab:contact-vs-cosymplectic}
 for reference):

\begin{itemize}
	\item When the torsion is zero, \( \Liederivative{\Reebfield}g = 0 \), it means that the Reeb vector field acts as a Killing field.
	In the contact context, such structures are termed K-contact, which in three dimensions is equivalent to being Sasakian, whereas in the cosymplectic setting they are known as co-Kähler.

	\item When the torsion is positive, the Reeb flow becomes an Anosov flow with smooth stable and unstable directions.
		According to~\cite{ghysDeformationsFlotsDAnosov1992}, this implies that the flow is algebraic Anosov (up to reparametrization).
		Thus the manifold is a compact smooth quotient of either \( \modelLieGroup \) or \( \widetilde{\Liegroup{SL}}(2, \mathbb{R}) \).
	The former aligns with the cosymplectic case classification, while the latter pertains to the contact case.
\end{itemize}

\begin{table}
	\centering
	\renewcommand{\arraystretch}{1.3}
	\caption{Comparison of the classification of critical metrics in \( 3 \) dimensions in the contact and cosymplectic settings.}
	\begin{tabular}{l l l}
		\toprule
			& Contact
				& Cosymplectic
		\\
		\midrule
		\( \Liederivative{\Reebfield}g = 0 \)
			& Sasakian
				& Co-Kähler
		\\\midrule
		\multirow{ 2}{*}{\( \norm{\Liederivative{\Reebfield}g}^2 > 0 \)}
			& \multicolumn{2}{p{4cm}}{\centering Algebraic Anosov}\\
			& \( \widetilde{\Liegroup{SL}}(2, \mathbb{R}) \) type
				& \( \modelLieGroup \) type
		\\
		\bottomrule
	\end{tabular}
	\label{tab:contact-vs-cosymplectic}
\end{table}

These parallels suggest the potential existence of a geometric structure simultaneously generalizing contact and cosymplectic structures for which critical metrics exist exactly if the Reeb vector field is either Killing or its flow is a reparametrization of an algebraic Anosov flow.
A natural candidate for this general framework are the \( \Reebfield \)-invariant almost cosymplectic structures considered in \cref{thm:Euler--Lagrange}.

\section{Preliminaries}
\label{sec:preliminaries}

In this \namecref{sec:preliminaries} we give a more detailed overview of the objects to be studied and mention some of their properties that we will need.

\begin{definition}
	A cosymplectic manifold is a \( (2n + 1) \)-dimensional manifold \( M \) equipped with a closed \( 1 \)-form \( \alpha \) and a closed \( 2 \)-form \( \beta \) such that \( \alpha \wedge \beta^{n} \) is a volume form.
	The unique vector field \( \Reebfield \) satisfying that
	\begin{equation*}
		\alpha(\Reebfield) = 1
		\quad\text{and}\quad
		\beta(\Reebfield, \emptyslot) = 0
	\end{equation*}
is called the \emph{Reeb vector field} and its flow the \emph{Reeb flow}.
	\label{def:cosymplectic}
\end{definition}
A smooth map between cosymplectic manifolds \( f \colon (M_1,\alpha_1,\beta_1) \to (M_2,\alpha_2,\beta_2) \) that satisfies \( f^* \alpha_2 = \alpha_1 \) and \( f^* \beta_2 = \beta_1 \) is said to be \emph{cosymplectic}.
A cosymplectic diffeomorphism is called a \emph{cosymplectomorphism}.

\subsection{Examples of cosymplectic manifolds}
\label{sec:preliminaries-cosymplectic-manifolds}

A simple class of (open) cosymplectic manifolds is formed by products of symplectic manifolds with intervals.
Let \( (S, \omega) \) be a symplectic manifold, \( I \) an (open) interval, and \( \dtFactor \neq 0 \) a real constant.
Then \( (S \times I, \dtFactor \dif t, p_1^* \omega) \) is cosymplectic with Reeb vector field \( \dtFactor^{-1} \partial_{t} \), where \( t \) is the coordinate on \( I \) and \( p_1 \colon S \times I \to S \) the projection.

For another construction, take a symplectomorphism \( f \colon S \to S \).
Then we have the \emph{symplectic mapping torus} \( S_f := S \times \mathbb{R}/(p, t+1)\sim(f(p), t) \).
The cosymplectic structure \( (\dtFactor \dif t, p_1^*\omega) \) on \( S \times \mathbb{R} \) descends to the quotient since \( f \) is a symplectomorphism.
The Reeb flow is the suspension flow of \( f \) (up to the scaling by \( \dtFactor^{-1} \)).

Any matrix \( L \in \Liegroup{SL}(2, \mathbb{Z}) \) generates a diffeomorphism of the torus \( \mathbb{T}^{2} = \mathbb{R}^{2}/\mathbb{Z}^{2} \), which is a symplectomorphism of $\mathbb{T}^2$ with respect to the standard symplectic structure.
In case \( L \) has no eigenvalue of absolute value one, we call the resulting map a \emph{hyperbolic toral automorphism}.
The mapping tori $\mathbb{T}^2_L$ glued by these maps form one of the two types of cosymplectic manifolds admitting critical metrics in \cref{thm:classification}.

\subsection{Algebraic Anosov flows}
\label{sec:algebraic-anosov}

Algebraic Anosov flows are a class of flows on certain homogeneous spaces.
The Lie groups acting on these spaces are \( \widetilde{\Liegroup{SL}}(2, \mathbb{R}) \) and \( \modelLieGroup \), the simply connected Lie groups with Lie algebras respectively \( \Liealgebra{sl}(2, \mathbb{R}) \) and \( \modelLieAlgebra \).
These are both three-dimensional, for \( \Liealgebra{sl}(2, \mathbb{R}) \) one can choose a basis \( (Y, X_{+}, X_{-}) \) such that \( [Y, X_{\pm}] = \pm X_{\pm} \) and \( [X_{+}, X_{-}] = Y\).
The Lie algebra \( \modelLieAlgebra \) has a basis \( (Y, X_{+}, X_{-}) \) with Lie bracket \( [Y, X_{\pm}] = \pm X_{\pm} \) and \( [X_{+}, X_{-}] = 0 \).

In both cases, the element \( Y \) (or any rescaling of it) defines a \( 1 \)-parameter subgroup \( t \mapsto \exp tY \) which by right-multiplication gives rise to a flow \( \Phi^{t} \) on the Lie group, that is, \( \Phi^{t}(g) = g\exp(tY) \).
Let \( \Gamma \) be a subgroup of \( G \).
The flow then descends to give a flow \( \Phi^{t} (\Gamma g) = \Gamma g\exp(tY) \) on any homogeneous space \( \Gamma \backslash G \) of the Lie group.
This leads to the following key notion in our work (for the definition of Anosov flow, please refer to \cref{def:hyperbolic-set-and-Anosov}).

\begin{definition}
	An Anosov flow on a manifold is said to be \emph{algebraic} if it is covered by one of the flows \( \Phi^{t} \) on either \( \widetilde{\Liegroup{SL}}(2, \mathbb{R}) \) or \( \modelLieGroup \).
	\label{def:algebraic-anosov}
\end{definition}

In particular, if \( G \) is either \( \modelLieGroup \) or \( \widetilde{\Liegroup{SL}}(2, \mathbb{R}) \), the flows \( \Phi^{t} \) on Lie group quotients \( \Gamma \backslash G \) for \( \Gamma \) a discrete, cocompact subgroup of \( G \) are algebraic Anosov flows.
More generally, \( \Phi^{t} \) also descends to become an algebraic Anosov flow on compact quotients of \( G \) by isometries.

\begin{remark}
	The Lie group \( \modelLieGroup \) can be explicitly described as the semidirect product \( \mathbb{R} \ltimes \mathbb{R}^{2} \), where an element \( t \in \mathbb{R} \) acts on \( \mathbb{R}^2 \) by the diagonal matrix with diagonal \( (e^{t}, e^{-t}) \).

	The \( \modelLieGroup\) group is also known by other names, such as the Poincaré group of \( 2 \)-dimensional spacetime, that is, it is the group of isometries of the plane equipped with the inner product with signature \( (+, -) \).
	It is also called \( \Liegroup{AO}(1, 1) \) (for affine orthogonal) and \( E(1, 1) \) (for Euclidean).
	As a matrix Lie group it can be represented as the set of matrices of the form
	\begin{equation*}
		\begin{pmatrix}
			e^{t}
				& 0
					& a
			\\
			0
				& e^{-t}
					& b
			\\
			0&	0&	1
		\end{pmatrix}
		,
	\end{equation*}
	where \( t, a, b \in \mathbb{R} \).

	Similarly its Lie algebra \( \modelLieAlgebra \) is also known as \( \mathfrak{p}(1, 1) \), the Poincaré Lie algebra, and is the Lie algebra VI\textsubscript{0} in the Bianchi classification.
	\label{rem:Sol-definition}
\end{remark}

\subsection{Compatible metrics and the Chern--Hamilton functional}
\label{sec:preliminaries-metrics}

\begin{definition}\label{def:compatible-metric}
	A Riemannian metric \( g \) is called \emph{compatible} with the cosymplectic structure $(\alpha,\beta)$ if there exists a \( (1, 1) \)-tensor field \( \phi \) such that
	\begin{equation}
		\phi^2 = -\identityOperator + \alpha \tensorproduct \Reebfield,
		\quad
		\beta = g(\emptyslot, \phi \emptyslot),
		\quad\text{and}\quad
		\alpha = g(\Reebfield, \emptyslot)
		.
		\label{eq:def:compatible-metric-direct}
	\end{equation}
	In dimension~\( 3 \) (i.e., when \( n = 1 \)), this is equivalent to a metric \( g \) satisfying that
	\begin{equation}
		\HodgeStar_g \alpha = \beta,
		\quad\text{and}\quad
		g(R, R) = 1
		,
		\label{eq:compatible-metric-Hodge-star-definition}
	\end{equation}
	where \( \HodgeStar_g: \Omega^1(M) \to \Omega^2(M) \) denotes the Hodge star operator with regard to the metric \( g \).
\end{definition}

The proof that the two definitions are equivalent in dimension~\( 3 \) is elementary, so we omit it.
In dimension~\( 2n + 1 > 3 \) a metric can satisfy \cref{eq:compatible-metric-Hodge-star-definition} (with \( \beta \) replaced by \( \beta^n \)) without being compatible with the cosymplectic structure.
Note that, given a compatible metric \( g \), the requirement \( \beta = g(\emptyslot, \phi \emptyslot) \) uniquely determines \( \phi \).
On the other hand, given a cosymplectic structure many compatible metrics exist.
One can, for example, construct compatible metrics by using a polar decomposition on arbitrary metrics.

Let $(M,\alpha,\beta)$ be a cosymplectic manifold and $g$ be a compatible Riemannian metric with associated \( (1, 1) \)-tensor field \( \phi \).
If $\Reebfield$ is the Reeb field defined by the cosymplectic structure, the following properties are well known, see e.g.~\cite{blairRiemannianGeometryContact2010}:
\begin{equation}
	\phi(\Reebfield) = 0
	,
	\quad
	\alpha \circ \phi = 0,
	\quad
	\alpha
	=
	g(\Reebfield, \emptyslot)
	,
	\quad
	\norm{\Reebfield} = 1
	\quad\text{and}\quad
	\nabla_{\Reebfield}\Reebfield = 0,
	\label{eq:compatible-metric-properties}
\end{equation}
where the covariant derivative is taken for the Levi--Civita connection $\nabla$ associated to the metric $g$.
It is also easy to check that any compatible metric has the form 
\begin{equation}
	g = g(\phi\emptyslot, \phi\emptyslot) + \alpha \tensorproduct \alpha\,.
	\label{eq:g=g(phiphi)+alphaalpha}
\end{equation}
Note that \( (\phi, \alpha, \Reebfield, g) \) forms an almost contact metric structure satisfying extra conditions coming from the cosymplectic structure.
The covariant derivative of \( \phi \) can be calculated as
\begin{equation}
	2g\bigl(
		(\nabla_X \phi)Y,
		Z
	\bigr)
	=
	g([\phi, \phi](Y, Z), \phi X)),
	\label{eq:phi-covariant-derivative}
\end{equation}
where \( [\phi, \phi] \) is the Nijenhuis bracket of \( \phi \) with itself, see~\cite[Lemma 6.1]{blairRiemannianGeometryContact2010}.

We remark that a cosymplectic manifold with a compatible metric is sometimes referred to as \emph{almost co-Kähler} in the literature.
If the Nijenhuis bracket \( [\phi, \phi] \) vanishes, it is called \emph{co-Kähler}.
In dimension~\( 3 \), an almost co-Kähler manifold is co-Kähler if and only if the Reeb vector field is Killing, i.e., \( \Liederivative{\Reebfield}g = 0 \)~\cite{goldbergIntegrabilityAlmostCosymplectic1969}.

\begin{remark}
	We observe that what we call co-Kähler is also called cosymplectic in the literature (e.g., by Blair), while our cosymplectic might then be called almost cosymplectic (as in~\cite{blairRiemannianGeometryContact2010,goldbergIntegrabilityAlmostCosymplectic1969}).
	We follow the nomenclature used originally by Libermann~\cite{libermann1959} and more recently by, for example, Li~\cite{liTopologyCosymplecticCoKahler2008} and Guillemin, Miranda, and Pires~\cite{guilleminCodimensionOneSymplectic2011}.
\end{remark}

An important object when studying almost contact metric structures, and thus in particular compatible metrics on cosymplectic manifolds, is the tensor field \( h := \frac{1}{2} \Liederivative{\Reebfield} \phi \).
We recall here some of its properties.

\begin{lemma}
	Let \( (M, \alpha, \beta) \) be a cosymplectic manifold with a compatible metric \( g \) and associated \( (1, 1) \)-tensor field \( \phi \).
	Then \( h \) is symmetric and satisfies the following identities:
	\begin{equation}
		h(\Reebfield) = 0
		,\quad
		h \phi + \phi h = 0
		,\quad
		\Liederivative{\Reebfield}g = 2g(\emptyslot, h \phi \emptyslot)
		,
		\quad\text{and}\quad
		\nabla \Reebfield = h \phi
		.
		\label{eq:LRphi-properties}
	\end{equation}
	Moreover, \( h \) is orthogonally diagonalizable at every point of \( M \) with eigenvalues \( (0, \mu, -\mu) \) where \( \mu = 2^{-3/2} \norm{\Liederivative{\Reebfield}g} \).
	\label{thm:LRphi-properties}
\end{lemma}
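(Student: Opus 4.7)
The plan is to establish the identities in the order they appear, deriving them from the closedness of $\alpha$ and $\beta$ together with the algebraic relations in \cref{eq:def:compatible-metric-direct}. A recurring preliminary observation is that since $\alpha$ is closed with $\alpha(\Reebfield) = 1$, Cartan's magic formula gives $\Liederivative{\Reebfield}\alpha = 0$; applied similarly to $\beta$ it yields $\Liederivative{\Reebfield}\beta = 0$, and of course $\Liederivative{\Reebfield}\Reebfield = 0$. The identity $h(\Reebfield) = 0$ then follows immediately from $(\Liederivative{\Reebfield}\phi)\Reebfield = \Liederivative{\Reebfield}(\phi\Reebfield) - \phi\,\Liederivative{\Reebfield}\Reebfield = 0$. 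The anticommutation $h\phi + \phi h = 0$ is obtained by applying $\Liederivative{\Reebfield}$ to $\phi^2 = -\identityOperator + \alpha\tensorproduct\Reebfield$: the Leibniz rule gives $2(h\phi + \phi h)$ on the left, while the right vanishes by what was just noted.

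Next I would derive $\Liederivative{\Reebfield}g = 2g(\emptyslot, h\phi\emptyslot)$ by expanding $\Liederivative{\Reebfield}\beta = 0$ using $\beta = g(\emptyslot, \phi\emptyslot)$. A straightforward Leibniz computation gives $0 = (\Liederivative{\Reebfield}g)(X, \phi Y) + 2g(X, hY)$. Replacing $Y$ by $\phi Y$, invoking $\phi^2 = -\identityOperator + \alpha\tensorproduct\Reebfield$, and using the easily-checked $(\Liederivative{\Reebfield}g)(\emptyslot, \Reebfield) = 0$ (a consequence of $\Liederivative{\Reebfield}\alpha = 0$ and $\alpha = g(\Reebfield, \emptyslot)$) yields the desired identity. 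Since $\Liederivative{\Reebfield}g$ is symmetric, $h\phi$ is then forced to be $g$-self-adjoint. The symmetry of $h$ itself requires one further step: combining the identity $h = -(h\phi)\phi$ (which uses $\phi^2 = -\identityOperator + \alpha\tensorproduct\Reebfield$ together with $h(\Reebfield) = 0$) with the skew-adjointness of $\phi$ (from the antisymmetry of $\beta$) and the anticommutation $h\phi = -\phi h$, then taking $g$-transposes.

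For the formula $\nabla\Reebfield = h\phi$, closedness of $\alpha = g(\Reebfield, \emptyslot)$ translates to $(\nabla_X\alpha)(Y) = (\nabla_Y\alpha)(X)$, i.e., $g(\nabla_X\Reebfield, Y)$ is symmetric in $X, Y$, so $\nabla\Reebfield$ is a $g$-symmetric endomorphism. Substituting into the standard formula $(\Liederivative{\Reebfield}g)(X, Y) = g(\nabla_X\Reebfield, Y) + g(\nabla_Y\Reebfield, X)$ gives $\Liederivative{\Reebfield}g = 2g(\nabla_{\emptyslot}\Reebfield, \emptyslot)$, and comparison with the previous paragraph identifies $\nabla\Reebfield$ with $h\phi$.

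For the spectral statement in dimension three, $h$ is symmetric with $h\Reebfield = 0$, hence preserves the horizontal plane $\Reebfield^\perp$. The anticommutation $h\phi = -\phi h$ sends $\lambda$-eigenvectors of $h$ to $(-\lambda)$-eigenvectors, so the two horizontal eigenvalues must be $\pm\mu$ for some $\mu \geq 0$. Choosing an orthonormal frame $(\Reebfield, e_2, e_3)$ with $he_2 = \mu e_2$, $he_3 = -\mu e_3$ and $\phi e_2 = e_3$, a direct computation yields $(\Liederivative{\Reebfield}g)(e_2, e_3) = (\Liederivative{\Reebfield}g)(e_3, e_2) = -2\mu$ as the only non-zero components, so $\norm{\Liederivative{\Reebfield}g}^2 = 8\mu^2$ and $\mu = 2^{-3/2}\norm{\Liederivative{\Reebfield}g}$. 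The main subtle point in the argument is the deduction of the symmetry of $h$ from that of $h\phi$: because $\phi$ has a non-trivial kernel along $\Reebfield$, one cannot simply invert $\phi$, and the full algebraic package of the compatible structure must be used to conclude.
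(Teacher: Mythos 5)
Your proof is correct, and although its overall architecture (Lie-differentiating the structure tensors identity by identity) matches the paper's, two steps take a genuinely different route. For \( \nabla\Reebfield = h\phi \), the paper specializes the Nijenhuis-bracket formula \cref{eq:phi-covariant-derivative} for \( \nabla\phi \) at \( Y = \Reebfield \); you instead note that \( \dif\alpha = 0 \) makes \( \nabla\Reebfield \) a \( g \)-symmetric endomorphism, so the identity \( (\Liederivative{\Reebfield}g)(X,Y) = g(\nabla_X\Reebfield, Y) + g(\nabla_Y\Reebfield, X) \) collapses to \( 2g(\nabla_X\Reebfield, Y) \), and comparison with the third identity finishes the job. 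This is more elementary and avoids importing the covariant-derivative formula for \( \phi \) from Blair's book, at the cost of being specific to the closedness of \( \alpha \) (the paper's route generalizes more readily to the almost cosymplectic setting of the appendix). Second, the paper dispatches the symmetry of \( h \) in one clause, which is really too quick: \( \Liederivative{\Reebfield}\phi \) does not inherit antisymmetry from \( \phi \) unless \( \Liederivative{\Reebfield}g = 0 \). You correctly flag this and supply the missing algebra — \( h\phi \) is self-adjoint by the third identity, \( h = -(h\phi)\phi \) because \( h\Reebfield = 0 \), and transposing with the skew-adjointness of \( \phi \) and the anticommutation gives \( h = \phi h \phi \), hence symmetry — which is the honest way to close that step. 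Your derivation of \( \Liederivative{\Reebfield}g = 2g(\emptyslot, h\phi\emptyslot) \) from \( \Liederivative{\Reebfield}\beta = 0 \) rather than from the Lie derivative of \cref{eq:g=g(phiphi)+alphaalpha}, and your spectral computation, are equivalent to the paper's.
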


\begin{proof}
	Since \( \phi \) is antisymmetric, \( h \) is symmetric.
	One has \( 0 = \Liederivative{\Reebfield}(\phi(\Reebfield)) = 2h(\Reebfield) \) and \( 0 = 
	\Liederivative{\Reebfield}(\phi^2) = 2(h \phi + \phi h) \).
	The third identity of \cref{eq:LRphi-properties} follows from taking the Lie derivative in the Reeb direction of the identity \cref{eq:g=g(phiphi)+alphaalpha}, while the last identity follows from setting \( Y = \Reebfield \) in \cref{eq:phi-covariant-derivative}.

	The pointwise orthogonal diagonalizability is due to \( h \) being symmetric.
	As we have already proven, the Reeb vector field is an eigenvector field with eigenvalue \( 0 \), and if \( v \) is an eigenvector with eigenvalue \( \mu \), \( \phi v \) is an eigenvector with eigenvalue \( -\mu \).
	This together with \( \Liederivative{\Reebfield}g = 2g(\emptyslot, h \phi \emptyslot) \) and the fact that \( \phi \) is an isometry on \( \ker\alpha \) implies that \( \norm{\Liederivative{\Reebfield}g}^{2} = 4 \norm{h}^{2} = 8\mu^{2} \).
\end{proof}

The energy functional we study is the following.
\begin{definition}
	Let \( (M, \alpha, \beta) \) be a \( (2n+1) \)-dimensional cosymplectic manifold.
	The Chern--Hamilton energy functional adapted to this situation is
	\begin{equation}
		E(g)
		:=
		\int_M
		\norm{\Liederivative{\Reebfield} g}^2
		\,
		\alpha \wedge \beta^n
		\label{eq:Chern--Hamilton-functional}
		.
	\end{equation}
	The domain of the functional is the space of compatible metrics.
	\label{def:Chern--Hamilton-functional}
\end{definition}

We say that a metric is \emph{critical} if it is a critical point of the Chern--Hamilton functional in the variational sense, i.e., \( g_{0} \) is a critical metric if for all smooth curves \( g(t) \) of compatible metrics with \( g(0) = g_{0} \), 
\begin{equation*}
	\odfrac{}{t}\bigg\vert_{t=0} E(g(t)) = 0
	.
\end{equation*}

\subsection{Equivalent models of hyperbolic mapping tori}
\label{sec:critical-metrics-construction}

In this \namecref{sec:critical-metrics-construction}, we shall study different models for the cosymplectic manifolds arising as mapping tori of \( \mathbb{T}^2 \) by a hyperbolic toral automorphism $L$ in terms of quotients of \( \modelLieGroup \), and describe compatible metrics on them.
As we will show in \cref{sec:critical-metrics-Anosov}, all the constructed metrics in this \namecref{sec:critical-metrics-construction} are critical and, reciprocally, all critical metrics are of the types constructed below.

\begin{example}
	Let \( L \) be a hyperbolic toral automorphism (a matrix in \( \Liegroup{SL}(2, \mathbb{Z}) \) with no eigenvalues of absolute value one) and diagonalize it as \( L w_{\pm} = \lambda^{\pm 1} w_{\pm} \).
	Let us choose \( \lambda \) to be the (automatically real) eigenvalue with \( \abs{\lambda} > 1 \).
	Choose a \( V > 0 \) and normalize the \( w_{\pm} \) by imposing that \( (V \dif x \wedge \dif y)(w_{+}, w_{-}) = 1 \).
	The eigenvectors extend to vector fields on \( \mathbb{T}^{2} \times \mathbb{R} \).
	Define \( v_{\pm} := \abs{\lambda}^{\mp t} w_{\pm} \), \( t \) being the coordinate on the \( \mathbb{R} \) factor.

	Choosing a parameter \( \dtFactor > 0 \) gives a cosymplectic structure \( (\dtFactor \dif t, V \dif x \wedge \dif y) \) on \( \mathbb{T}^{2} \times \mathbb{R} \), and declaring the frame \( (\Reebfield = \dtFactor^{-1} \partial_{t}, v_{+}, v_{-}) \) to be orthonormal yields a compatible metric \( g \).
	By changing coordinates on \( \mathbb{T}^{2} \) to \( x_{\pm} \) such that the eigenvectors become \( w_{\pm} = \partial_{x_{\pm}} \) we can write the metric as \( \dif t^2 + \lambda^{2t} \dif x_{+}^{2} + \lambda^{-2t} \dif x_{-}^{2} \).
	The cosymplectic structure descends to the mapping torus \( \mathbb{T}^{2}_{L} \) (see \cref{sec:preliminaries-cosymplectic-manifolds}).
	To see that the metric also descends, recall that this is equivalent to it being invariant under the map \( F \colon (p, t+1) \mapsto (Lp, t) \).
	We have \( \dif F(\Reebfield) = \Reebfield \) and \( \dif F(w_{\pm}) = \lambda^{\pm 1} w_{\pm} \), so \( F^* \dif x_{\pm} = \lambda^{\pm 1} \dif x_{\pm} \), and we see that
	\begin{equation*}
		F^*(g_{(Lp, t)})
		=
		\dif t^2
		+
		\lambda^{2t} (\lambda \dif x_{+})^{2}
		+
		\lambda^{-2t} (\lambda^{-1} \dif x_{-})^{2}
		=
		g_{(p, t+1)}
		.
	\end{equation*}
	The construction thus gives a cosymplectic structure with a compatible metric on \( \mathbb{T}^{2}_{L} \), for which a direct calculation shows that \( \norm{\Liederivative{\Reebfield}g}^2 = 8(\dtFactor^{-1} \ln \abs{\lambda})^{2} \) and \( E(g) = 8V\dtFactor^{-1} (\ln \abs{\lambda})^{2} \).
	\label{ex:critical-metric-mapping-torus}
\end{example}

Note that if \( \lambda > 0 \) the \( v_{\pm} \) become smooth vector fields on \( \mathbb{T}^{2}_{L} \), otherwise they will only be defined locally with a global sign ambiguity coming from the quotient \( \mathbb{T}^{2} \times \mathbb{R} \to \mathbb{T}^2_L \).

\begin{remark}
	In \cref{ex:critical-metric-mapping-torus} a choice of the eigenvectors \( w_{\pm} \) is made.
	Since the pair \( (w_{+}, w_{-}) \) is normalized, the only freedom is in scaling \( (w_{+}, w_{-}) \) to \( (c w_{+}, c^{-1}w_{-}) \) by \( c \neq 0 \).
	The resulting metric has the form
	\begin{equation*}
		\dif t^{2} + c^{-2}\lambda^{2t} \dif x_{+}^{2} + c^{2}\lambda^{-2t} \dif x_{-}^2
		.
	\end{equation*}
	However, the coordinate change \( t \mapsto t + (2 \ln \lambda)^{-1}\ln c \) preserves the cosymplectic structure and puts the metric on the form from \cref{ex:critical-metric-mapping-torus} (i.e., with \( c = 1 \)).
	In conclusion, the construction does not have any hidden choices.
	\label{rem:HTA-mapping-torus-critical-metric-freedom}
\end{remark}

Consider now the Lie group \( \modelLieGroup \) (\cref{rem:Sol-definition}).
It admits a left-invariant cosymplectic structure with compatible metric, obtained by defining these structures on its Lie algebra and then translating them to the rest of the group.
Explicitly, the construction is as follows.

\begin{example}
	Take coordinates \( (t, x_{+}, x_{-}) \) on \( \modelLieGroup = \mathbb{R} \ltimes \mathbb{R}^2 \).
	Consider the vectors obtained by restricting \( (\partial_{t}, \partial_{x_{\pm}}) \) to \( \modelLieAlgebra \), the tangent space of \( \modelLieGroup \) at the identity.
	By left-multiplication these vectors extend to form a frame of left-invariant vector fields on \( \modelLieGroup \) given by \( (\partial_{t}, e^{\pm t} \partial_{x_{\pm}}) \).
	Notice that these vector fields form the basis of \( \modelLieAlgebra \) as described in \cref{rem:Sol-definition}: \( [\partial_{t}, e^{\pm t} \partial_{x_{\pm}}] = \pm e^{\pm t} \partial_{x_{\pm}} \) and \( [e^{t} \partial_{x_{+}}, e^{-t} \partial_{x_{-}}] = 0 \).
	Similarly, the \( 1 \)-forms \( (\dif t, e^{\pm t}\dif x_{\pm}) \) form a global, left-invariant coframe.
	
	For any \( \mu \neq 0 \), \( (\mu \dif t, \dif x_{+} \wedge \dif x_{-}) \) is a cosymplectic structure and \( \mu^{2} \dif t^{2} + \dif x_{+}^{2} + \dif x_{-}^{2} \) is a compatible metric on \( \modelLieAlgebra \).
	The left-invariant extensions of these objects to the whole Lie group \( \modelLieGroup \) gives the cosymplectic structure \( (\mu \dif t, \dif x_{+} \wedge \dif x_{-}) \) and the compatible metric
	\begin{equation*}
		g
		=
		\mu^{2}
		\dif t^2
		+
		e^{-2t} \dif x_{+}^{2}
		+
		e^{2t} \dif x_{-}^{2}
		.
	\end{equation*}
	They are related through the left-invariant \( (1, 1) \)-tensor field
	\begin{equation*}
		\phi
		=
		e^{2t} \dif x_{-} \tensorproduct \partial_{x_{+}}
		-
		e^{-2t} \dif x_{+} \tensorproduct \partial_{x_-}
		.
	\end{equation*}
	The metrics \( g = \mu^{2} \dif t^2 + ce^{-2t}\dif x_{+}^{2} + ce^{2t} \dif x_{-}^{2} \) are also compatible for any \( c > 0 \).
	However, the map \( (t, x_{+}, x_{-}) \mapsto (t, c^{-1}x_{+}, c x_{-}) \) is a cosymplectic isometry to the model with \( c = 1 \), removing the need to consider other values of \( c \).
	\label{ex:compatible-metric-Sol}
\end{example}

Let \( \Aut(\modelLieGroup) \) denote the group of diffeomorphisms of \( \modelLieGroup \) preserving both the cosymplectic and Riemannian structure defined above (depending implicitly on the choice of \( \mu \)).

\begin{example}
	If \( \Gamma \) is a subgroup of \( \Aut(\modelLieGroup) \) such that \( M := \Gamma\backslash\modelLieGroup \) is compact and the quotient map is a covering, the cosymplectic structure with compatible metric from \cref{ex:compatible-metric-Sol} descends to \( M \).
	Moreover, the resulting Reeb vector field is algebraic Anosov (up to reparametrization by \( \mu \)).
	\label{ex:critical-metric-Sol-quotient}
\end{example}

These two (families of) constructions, \cref{ex:critical-metric-mapping-torus} and \cref{ex:critical-metric-Sol-quotient}, actually yield the same manifolds and cosymplectic and metric structures, see \cref{thm:Sol-covered=mapping-torus}.
The easiest way of proving this will combine the fact that the metrics constructed in \cref{ex:critical-metric-mapping-torus} are critical with some properties of critical metrics.
For this reason we postpone the proof until \cref{sec:Sol-and-mapping-torus-models-equivalent}.

\section{The critical point condition}
\label{sec:functional}

In this \namecref{sec:functional} we calculate the critical point condition for the  Chern--Hamilton energy functional.
This was done by Tanno~\cite{tannoVariationalProblemsContact1989} for contact manifolds.
Our proof is a generalization of that proof to a larger class of structures that contain both contact and cosymplectic manifolds.

We say that a \( (2n+1)\)-dimensional manifold \( M \) equipped with a \( 1 \)-form \( \alpha \) and a \( 2 \)-form \( \beta \) is \emph{almost cosymplectic} if \( \alpha \wedge \beta^n \) is a volume form.
This generalizes both contact manifolds (where \( \beta = \dif \alpha \)) and cosymplectic manifolds (for which \( \alpha \) and \( \beta \) are closed). The associated Reeb field is defined as the unique vector field  $\Reebfield$ such that $\alpha(\Reebfield)=1$ and $\beta(\Reebfield,\cdot)=0$.

We say that a Riemannian metric $g$ is compatible with an almost cosymplectic structure if there exists a \( (1, 1) \)-tensor field \( \phi \) such that
\begin{equation}
	\phi^2 = -\identityOperator + \alpha \tensorproduct \Reebfield,
	\quad
	\beta = g(\emptyslot, \phi \emptyslot),
	\quad\text{and}\quad
	\alpha = g(\Reebfield, \emptyslot)
	.
	\label{eq:compatible-metric-almost-cosymplectic}
\end{equation}
Note that if \( \phi \) exists it is uniquely defined by the requirement \( \beta = g(\emptyslot, \phi \emptyslot) \).
This definition of compatible metric generalizes the definitions of compatible metrics for contact and cosymplectic manifolds. The Chern--Hamilton energy functional is defined exactly as in the cosymplectic case (\cref{def:Chern--Hamilton-functional}).

We will in fact not prove the critical point condition for arbitrary almost cosymplectic manifolds but rather for a specific subset that still contains both cosymplectic and contact manifolds.
Let us be more precise.
\begin{definition}
	An almost cosymplectic structure \( (\alpha, \beta) \) with Reeb vector field \( \Reebfield \) is \emph{\( \Reebfield \)-invariant} if
	\begin{equation}
		\Liederivative{\Reebfield}\alpha = 0
		\quad\text{and}\quad
		\Liederivative{\Reebfield}\beta = 0.
		\label{eq:R-invariant-almost-cosymplectic}
	\end{equation}
	A metric \( g \) with associated \( (1, 1) \)-tensor field \( \phi \) is compatible with an \( \Reebfield \)-invariant almost cosymplectic structure if it is compatible in the sense of \cref{eq:compatible-metric-almost-cosymplectic} and also satisfies that
	\begin{equation}
		(\dif \alpha)(\phi \emptyslot, \emptyslot) + (\dif \alpha)(\emptyslot, \phi \emptyslot) = 0
		\label{eq:R-invariant-almost-cosymplectic-compatible-metric}
	\end{equation}
	\label{def:R-invariant-almost-cosymplectic}
\end{definition}
Since \( \phi \) is uniquely defined by \( g \), the criterion \cref{eq:R-invariant-almost-cosymplectic-compatible-metric} really is a criterion on the metric.
Note also that these conditions on the almost cosymplectic structure and the compatible metric are satisfied for both contact and cosymplectic structures:
on a contact manifold, \( \dif \alpha = \beta = g(\emptyslot, \phi \emptyslot)\) for which \( \phi \) is antisymmetric whereas for cosymplectic manifolds we have \( \dif \alpha = 0 \).

The main result of this \namecref{sec:functional} is the following \namecref{thm:Euler--Lagrange} that tells us the Euler--Lagrange equation for the Chern--Hamilton functional.
In its statement, we denote by \( \difalphasharp \) the \( (1,1) \)-tensor field which is the metric dual of \( \dif\alpha \), i.e., \( g(\emptyslot, \difalphasharp\emptyslot) = \dif\alpha \).
\begin{proposition}
	Let \( (M, \alpha, \beta) \) be a compact \( \Reebfield \)-invariant almost cosymplectic manifold.
	Then, a compatible metric \( g \) is critical for the Chern--Hamilton functional if and only if 
	\begin{equation*}
		\nabla_\Reebfield \Liederivative{\Reebfield} g
		-
		2 (\Liederivative{\Reebfield} g)
		\bigl(
			\emptyslot,
			\difalphasharp \emptyslot
		\bigr)
		=
		0\,.
	\end{equation*}
	\label{thm:Euler--Lagrange}
\end{proposition}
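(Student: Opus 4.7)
The plan is to compute the first variation of $E$ along a smooth curve of compatible metrics, integrate by parts using the $\Reebfield$-invariance of the volume form $\alpha\wedge\beta^n$, and then invoke the fundamental lemma of the calculus of variations in the subspace of admissible infinitesimal variations.

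Let $g(t)$ be a smooth curve of compatible metrics with $g(0) = g_0$ and put $h := \odfrac{g}{t}\bigg|_{t=0}$. Since $\Reebfield$ and $\alpha\wedge\beta^n$ are determined by the fixed almost cosymplectic structure, neither varies with $t$. Writing $k := \Liederivative{\Reebfield} g_0$ and letting $K, H$ denote the $g$-symmetric endomorphisms associated to $k, h$, differentiation of $\norm{k}_g^2$ under the integral produces two contributions: one from $\delta k = \Liederivative{\Reebfield} h$ and one from the variation of $g^{-1}$ in the norm, which I expect to give
\begin{equation*}
    \odfrac{}{t}\bigg|_{t=0}\norm{\Liederivative{\Reebfield}g(t)}^2 = -2\langle K^2, h\rangle + 2\langle k, \Liederivative{\Reebfield} h\rangle,
\end{equation*}
where $\langle\cdot,\cdot\rangle$ denotes the induced inner product on $(0,2)$-tensors and $K^2$ the $(0,2)$-tensor associated to the squared endomorphism via $g$.

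The $\Liederivative{\Reebfield} h$ term should then be integrated by parts. The $\Reebfield$-invariance gives $\Liederivative{\Reebfield}(\alpha\wedge\beta^n) = 0$, so by Stokes' theorem $\int_M \Liederivative{\Reebfield} F\, \alpha\wedge\beta^n = 0$ for every smooth function $F$. Applying this to $F = \langle k, h\rangle$ and expanding the inner Lie derivative (taking care of the contribution of $\Liederivative{\Reebfield} g^{-1} = -g^{-1}k g^{-1}$, which produces further $K^2$ terms) converts the $\Liederivative{\Reebfield} h$ term into a $\Liederivative{\Reebfield} k$ term. Using the identity $\langle a, b\rangle = \trace(AB)$ for symmetric $(0,2)$-tensors $a, b$, I expect the first variation to collapse to
\begin{equation*}
    \odfrac{}{t}\bigg|_{t=0} E(g(t)) = -2\int_M \trace\bigl((\Liederivative{\Reebfield} K)H\bigr)\,\alpha\wedge\beta^n.
\end{equation*}

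The next step is to characterise the admissible $h$. The identity $\alpha = g(\Reebfield, \cdot)$ being constant in $t$ forces $H\Reebfield = 0$, and differentiating $\beta = g(\cdot, \phi\cdot)$ together with $\phi^2 = -\identityOperator + \alpha\tensorproduct\Reebfield$ shows that $H$ must anti-commute with $\phi$ on $\ker\alpha$. A parallel computation using $\Liederivative{\Reebfield}\beta = 0$ yields $K\phi + \phi K = 0$, and taking $\Liederivative{\Reebfield}$ of this identity shows that $\Liederivative{\Reebfield} K$ also anti-commutes with $\phi$ and annihilates $\Reebfield$. Hence $\Liederivative{\Reebfield} K$ lies in exactly the same subspace of endomorphisms as the admissible $H$'s; since this subspace carries a positive-definite trace pairing and every element of it can be realised locally as the tangent vector of a curve of compatible metrics, the vanishing of $\delta E$ on all admissible $h$ is equivalent to the pointwise identity $\Liederivative{\Reebfield} K = 0$.

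The final step rewrites $\Liederivative{\Reebfield} K = 0$ in the form of the statement. From $2g(\nabla_X\Reebfield, Y) = (\Liederivative{\Reebfield}g)(X, Y) + \dif\alpha(X, Y)$ one obtains $\nabla\Reebfield = \tfrac{1}{2}(K - \difalphasharp)$ at the endomorphism level. Starting from $(\Liederivative{\Reebfield}K - \nabla_{\Reebfield}K)(Y) = K(\nabla_Y\Reebfield) - \nabla_{KY}\Reebfield$, a direct computation expresses $\Liederivative{\Reebfield}K$ as $\nabla_{\Reebfield}K$ plus a bracket of $\difalphasharp$ with $K$; lowering with $g$ (using that $\difalphasharp$ is $g$-antisymmetric while $K$ is $g$-symmetric) transforms this bracket into the $(\Liederivative{\Reebfield}g)(\cdot, \difalphasharp\cdot)$ term of the Euler--Lagrange equation. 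The main technical obstacle is verifying that $\Liederivative{\Reebfield} K$ itself belongs to the admissible subspace, which is exactly what makes the variational equation pointwise rather than merely a projection, and which depends crucially on the $\Reebfield$-invariance assumption motivating the definition of this class of structures.
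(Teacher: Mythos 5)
Your overall architecture is the same as the paper's: compute the first variation, integrate by parts against the $\Reebfield$-invariant volume form, characterize the tangent space to the space of compatible metrics (symmetric, $\iota_\Reebfield H=0$, anticommuting with $\phi$), and then upgrade the weak Euler--Lagrange equation to a pointwise one by checking that the Euler--Lagrange tensor itself is an admissible variation. Within that framework, your treatment of the first variation is a genuinely different and cleaner route. The paper keeps the term $g^{ir}(g')^{js}(\Liederivative{\Reebfield}g)_{ij}(\Liederivative{\Reebfield}g)_{rs}$ and kills it with a separate lemma that uses the compatibility identities $g'=-g'(\phi\emptyslot,\phi\emptyslot)$ and the symmetry of $(\nabla\alpha)\phi$; in your version these ``$K^2$'' contributions cancel identically against the terms produced by $\Liederivative{\Reebfield}g^{-1}=-g^{-1}(\Liederivative{\Reebfield}g)g^{-1}$ in the integration by parts, with no compatibility input at all, and the variation collapses to $-2\int_M\operatorname{tr}\bigl((\Liederivative{\Reebfield}K)H\bigr)\,\alpha\wedge\beta^n$. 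I checked this cancellation and it is correct; it also makes the admissibility of the Euler--Lagrange tensor transparent, since $\Liederivative{\Reebfield}K$ kills $\Reebfield$ and anticommutes with $\phi$ (the latter needs the small extra identity $Kh+hK=0$, which you should state, and which follows from $h\phi+\phi h=0$). Two smaller points: the realizability of every admissible $H$ must be global, not local --- the paper's explicit curve $g(t)=g_0(e^{tH^+}\emptyslot,\emptyslot)$, $\phi(t)=\phi_0e^{tH^+}$ does this and you should include it.

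The one substantive gap is in your final translation step, and it is a discrepancy you must resolve before your argument proves the statement as written. With $\nabla\Reebfield=\tfrac12(K-\difalphasharp)$ one gets $\Liederivative{\Reebfield}K=\nabla_\Reebfield K+\tfrac12[\difalphasharp,K]$, and since $\difalphasharp$ anticommutes with $K$ (it commutes with $\phi$ by \cref{eq:R-invariant-almost-cosymplectic-compatible-metric} and anticommutes with $h$), the bracket equals $-K\difalphasharp$; lowering with $g$ yields
\begin{equation*}
	\bigl(\Liederivative{\Reebfield}K\bigr)^{\flat}
	=
	\nabla_\Reebfield\Liederivative{\Reebfield}g
	-
	(\Liederivative{\Reebfield}g)(\emptyslot,\difalphasharp\emptyslot),
\end{equation*}
i.e.\ coefficient $1$ on the $\difalphasharp$ term, whereas the proposition asserts coefficient $2$. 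The difference $(\Liederivative{\Reebfield}g)(\emptyslot,\difalphasharp\emptyslot)=(K\difalphasharp)^{\flat}$ is itself an admissible tensor (symmetric, annihilates $\Reebfield$, anticommutes with $\phi$), so this is not a harmless discrepancy absorbed by the projection onto admissible variations: at most one of the two pointwise equations can be the Euler--Lagrange condition. In the cosymplectic case $\difalphasharp=0$ and the issue disappears, which is why it does not affect \cref{thm:Euler--Lagrange-cosymplectic} or the rest of the paper; but for the proposition in the stated generality you need to track down the factor of $2$ --- either by locating an error in the conversion above, or by rechecking the normalization conventions for $\dif\alpha$ and $\difalphasharp$ against the contact-case benchmark $\nabla_\Reebfield h=2h\phi$ of Tanno. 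As it stands, your proof establishes a critical point condition of the displayed form but not, without further justification, with the coefficient appearing in the statement.
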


For contact manifolds \( \dif \alpha = \beta =  g(\emptyslot, \phi \emptyslot) \) implies that $\difalphasharp=\phi$.
Meanwhile on a cosymplectic manifold \( \difalphasharp \) vanishes because \( \dif\alpha = 0 \), so the critical point condition becomes simpler:

\begin{corollary}
	Let \( (M, \alpha, \beta) \) be a compact cosymplectic manifold and \( g \) a compatible metric.
	Then \( g \) is critical for the Chern--Hamilton functional if and only if
	\begin{equation*}
		\nabla_\Reebfield \Liederivative{\Reebfield} g
		=
		0
		.
	\end{equation*}
	\label{thm:Euler--Lagrange-cosymplectic}
\end{corollary}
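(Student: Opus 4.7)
The corollary is a direct specialisation of \cref{thm:Euler--Lagrange}, so the plan is simply to verify that every compact cosymplectic manifold together with every compatible metric falls into the framework of \(\Reebfield\)-invariant almost cosymplectic structures, and then observe that the extra term in the Euler--Lagrange equation vanishes identically.

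First, I would check the \(\Reebfield\)-invariance condition in \cref{eq:R-invariant-almost-cosymplectic}. Since \(\alpha\) and \(\beta\) are closed by the cosymplectic hypothesis, Cartan's magic formula yields
\begin{equation*}
    \Liederivative{\Reebfield}\alpha
    =
    \contractwith{\Reebfield}\dif\alpha + \dif(\alpha(\Reebfield))
    =
    0 + \dif(1)
    =
    0,
\end{equation*}
and likewise \(\Liederivative{\Reebfield}\beta = \contractwith{\Reebfield}\dif\beta + \dif(\contractwith{\Reebfield}\beta) = 0\) using \(\beta(\Reebfield,\emptyslot)=0\). Hence the underlying structure is \(\Reebfield\)-invariant almost cosymplectic.

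Next I would verify that a compatible metric in the cosymplectic sense of \cref{def:compatible-metric} also satisfies the compatibility condition \cref{eq:R-invariant-almost-cosymplectic-compatible-metric} required in \cref{def:R-invariant-almost-cosymplectic}. This is immediate: the relations \cref{eq:def:compatible-metric-direct} used to define \(\phi\) agree with those in \cref{eq:compatible-metric-almost-cosymplectic}, and since \(\dif\alpha = 0\) the symmetry-type condition \((\dif\alpha)(\phi\emptyslot,\emptyslot) + (\dif\alpha)(\emptyslot,\phi\emptyslot) = 0\) holds trivially.

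Finally, \cref{thm:Euler--Lagrange} can be applied directly. The tensor \(\difalphasharp\), defined by \(g(\emptyslot, \difalphasharp\emptyslot) = \dif\alpha\), vanishes identically because \(\dif\alpha = 0\). Therefore the correction term \(2(\Liederivative{\Reebfield}g)(\emptyslot, \difalphasharp\emptyslot)\) is zero, and the critical point equation reduces to \(\nabla_\Reebfield \Liederivative{\Reebfield}g = 0\), as claimed. There is no real obstacle here beyond recording these verifications; the substantive work has already been carried out in the proof of \cref{thm:Euler--Lagrange}.
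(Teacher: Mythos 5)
Your proposal is correct and matches the paper's own (very brief) justification: the paper likewise notes that closedness of \( \alpha \) and \( \beta \) makes every cosymplectic structure \( \Reebfield \)-invariant almost cosymplectic with the compatibility condition \cref{eq:R-invariant-almost-cosymplectic-compatible-metric} holding trivially, and that \( \difalphasharp \) vanishes because \( \dif\alpha = 0 \), so \cref{thm:Euler--Lagrange} specialises to the stated equation. Your explicit Cartan-formula verifications are exactly the unstated checks the paper relies on.
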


The only change in the critical point condition for almost cosymplectic structures when compared with the contact case consists in replacing \( \phi \) by \( \difalphasharp \). Accordingly, the general argument used by Tanno to calculate the critical point condition in the contact case~\cite{tannoVariationalProblemsContact1989} still works here.

The proof of the critical point condition contains two steps:
first, the first variation of the functional is calculated to obtain a sufficient criterion for a metric to be critical.
Then it is shown that this criterion is necessary. The full details of the computations, which contain the needed generalizations and explain the appearance of the \( \difalphasharp \) term, are provided in \cref{sec:first-variation-calculation}.

First, we state the first variation of the Chern--Hamilton energy functional, which is proved in the aforementioned appendix.

\begin{lemma}
	Let \( (M, \alpha, \beta) \) be a compact \( \Reebfield \)-invariant almost cosymplectic manifold and let \( g_{t} \) be a curve of compatible metrics with \( \phi_{t} \) the associated curve of \( (1, 1) \)-tensor fields.
	Then,
	\begin{equation*}
		\odfrac{}{t}
			E(g_{t})
		=
		2\int_M
		g_{t}\Bigl(
			2
			(\Liederivative{R}g_{t})
			\bigl(
				\emptyslot,
				\difalphasharp \emptyslot
			\bigr)
			-
			\nabla^t_\Reebfield \Liederivative{\Reebfield} g_{t}
			,
			g'_{t}
		\Bigr)
		\,\alpha\wedge\beta^n
		,
	\end{equation*}
	where \( g(\emptyslot, \difalphasharp \emptyslot) = \dif\alpha \) and \( \nabla^t \) denotes the Levi--Civita connection coming from the metric \( g_t \).
	\label{thm:Chern--Hamilton-functional-first-variation}
\end{lemma}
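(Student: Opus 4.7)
The proof is by direct computation of $\frac{d}{dt}E(g_t)\big|_{t=0}$, using that $\alpha$, $\beta$, the Reeb field $R$, and hence the volume form $\alpha\wedge\beta^n$ are all independent of $t$, so all $t$-dependence lies in the metric $g_t$ and in $A_t := \Liederivative{R}g_t$. Setting $h := g_t'$ and using $\frac{d}{dt}g_t^{ij} = -h^{ij}$ for the inverse metric and $\frac{d}{dt}A_t = \Liederivative{R}h$ (because $R$ is fixed), a direct expansion of $\|A_t\|^2 = g_t^{ac}g_t^{bd}(A_t)_{ab}(A_t)_{cd}$, together with the symmetry of $A_t$, gives the pointwise identity
\begin{equation*}
	\frac{d}{dt}\|A_t\|^2 = -2\langle h, A\circ A\rangle + 2\langle A, \Liederivative{R}h\rangle,
\end{equation*}
where $(A\circ A)_{ab} := A_{ac}g^{cd}A_{db}$.

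The next step is to integrate by parts in the Reeb direction in order to trade $\Liederivative{R}h$ for $\Liederivative{R}A$. The $\Reebfield$-invariance hypothesis gives $\Liederivative{R}(\alpha\wedge\beta^n) = 0$, and hence $\int_M \Liederivative{R}f \cdot \alpha\wedge\beta^n = 0$ for every smooth scalar $f$ (by Cartan's formula and Stokes). Apply this to $f = \langle A, h\rangle_{g}$, expand via Leibniz using $\Liederivative{R}g^{ij} = -A^{ij}$ on both inverse-metric factors, and obtain the identity $\int_M \langle A, \Liederivative{R}h\rangle = 2\int_M \langle A\circ A, h\rangle - \int_M \langle \Liederivative{R}A, h\rangle$. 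Substituting this into the first-variation display yields
\begin{equation*}
	\frac{d}{dt}E(g_t) = 2\int_M \bigl\langle A\circ A - \Liederivative{R}A,\, h\bigr\rangle\,\alpha\wedge\beta^n.
\end{equation*}

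Finally, I convert $\Liederivative{R}A$ to $\nabla_R A$ using the torsion-free Leibniz identity $(\Liederivative{R}A)_{ab} = (\nabla_R A)_{ab} + A_{cb}\nabla_a R^c + A_{ac}\nabla_b R^c$, and decompose $\nabla_a R_b = \tfrac12 A_{ab} + \tfrac12 (d\alpha)_{ab}$: the symmetric part is $\tfrac12 A$ because $A = \Liederivative{R}g$, and the antisymmetric part is $\tfrac12 d\alpha$ because $\alpha = g(R, \emptyslot)$. The latter is precisely where the $\difalphasharp$-term enters, through the identification $g(\emptyslot, \difalphasharp\emptyslot) = d\alpha$, which gives $\nabla_a R^c = \tfrac12 A_a{}^c - \tfrac12 (\difalphasharp)^c{}_a$. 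After substituting and using the symmetries of $A$ combined with the fact that $h$ is symmetric, the $A\circ A$ contributions cancel against the one from the integration-by-parts step, and the remaining $\difalphasharp$ contributions assemble into the stated formula.

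The principal technical challenge is the tensor-index bookkeeping in this final step — in particular, correctly symmetrizing the contribution $A_{cb}(\difalphasharp)^c{}_a + A_{ac}(\difalphasharp)^c{}_b$ arising from the antisymmetric part of $\nabla R$ and pairing it against the symmetric variation $h$ so as to recover the precise coefficient in the statement. One also has to verify that the structural identities $\iota_R d\alpha = 0$ (equivalently, $\difalphasharp(R) = 0$) and $\phi\difalphasharp = \difalphasharp\phi$ (equivalent to the compatibility condition $(d\alpha)(\phi\emptyslot,\emptyslot) + (d\alpha)(\emptyslot,\phi\emptyslot) = 0$) are respected by the computation; this is what ensures no spurious terms appear when paired against compatible metric variations. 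The entire argument parallels Tanno's original calculation for contact metric manifolds, specialised by $\difalphasharp = \phi$.
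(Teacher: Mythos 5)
Your overall strategy is sound and takes a genuinely different route from the paper's: the paper isolates the cross term $g^{ir}(g')^{js}(\Liederivative{\Reebfield}g)_{ij}(\Liederivative{\Reebfield}g)_{rs}$ and proves it vanishes using the compatibility of the variation with $\phi$ (via $2g' = g' - g'(\phi\emptyslot,\phi\emptyslot)$ and the symmetry of $(\nabla\alpha)\phi$), then handles the $\nabla\Reebfield$-terms by a similar $\phi$-conjugation; you instead integrate by parts on $\langle A, h\rangle$ (writing $A=\Liederivative{\Reebfield}g_t$, $h=g_t'$ as you do) and let the $A\circ A$ contributions cancel algebraically, which needs no compatibility input at all. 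Those first two steps check out.

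The gap is in the final step, which you describe but do not carry out, and which is exactly where the coefficient of the $\difalphasharp$-term is decided. With your own decomposition $\nabla_a \Reebfield_b = \tfrac12 A_{ab} + \tfrac12(\dif\alpha)_{ab}$ --- which is the correct one for the convention $(\dif\alpha)_{ab} = \nabla_a\alpha_b - \nabla_b\alpha_a$ --- one finds
\begin{equation*}
	(\Liederivative{\Reebfield}A)_{ab}
	=
	(\nabla_{\Reebfield}A)_{ab}
	+
	(A\circ A)_{ab}
	-
	\tfrac12\bigl(
		A_{cb}\tensor{(\difalphasharp)}{^c_a}
		+
		A_{ac}\tensor{(\difalphasharp)}{^c_b}
	\bigr)
	,
\end{equation*}
so after substituting into $2\int\langle A\circ A - \Liederivative{\Reebfield}A, h\rangle$ and pairing the symmetrized $\difalphasharp$-term with the symmetric $h$, the computation terminates at
$\odfrac{}{t}E(g_t) = 2\int_M g_t\bigl((\Liederivative{\Reebfield}g_t)(\emptyslot,\difalphasharp\emptyslot) - \nabla^t_{\Reebfield}\Liederivative{\Reebfield}g_t,\, g_t'\bigr)\,\alpha\wedge\beta^n$,
i.e., with coefficient $1$ on the $\difalphasharp$-term rather than the stated $2$. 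The stated coefficient --- which in the contact case must reproduce Tanno's critical-point condition $\nabla_{\Reebfield}\tau = 2\tau(\emptyslot,\phi\emptyslot)$ --- corresponds to the antisymmetric part of $\nabla\alpha$ being $\dif\alpha$ itself, i.e., to the convention $\dif\alpha(X,Y) = \tfrac12\bigl(X\alpha(Y)-Y\alpha(X)-\alpha([X,Y])\bigr)$ customary in the contact-metric literature. You must either adopt that convention explicitly (replacing your decomposition by $\nabla_a\Reebfield_b = \tfrac12 A_{ab} + (\dif\alpha)_{ab}$) or otherwise account for the missing factor of $2$; as written, the argument you outline does not arrive at the formula claimed in the lemma. (For the cosymplectic application $\dif\alpha = 0$ the discrepancy is invisible, but the lemma is stated for general $\Reebfield$-invariant almost cosymplectic structures.)
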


This \namecref{thm:Chern--Hamilton-functional-first-variation} implies that the metrics solving the equation
\[
	\nabla_\Reebfield \Liederivative \Reebfield g
	-
	2 (\Liederivative{\Reebfield} g)(
			\emptyslot
			,
			\difalphasharp \emptyslot
	) = 0
\]
are critical points of the Chern--Hamilton functional for \( \Reebfield \)-invariant almost cosymplectic manifolds.
To prove that this condition is also necessary, the following characterization of the tangent space to the space of compatible metrics is needed.
For the proof, see \cref{sec:compatible-metrics-tangent-space}.
For this result, the assumptions that \( \Liederivative{\Reebfield}\alpha \) and \( \Liederivative{\Reebfield}\beta \) vanish are not necessary, we just assume that \( (M, \alpha, \beta) \) is an almost cosymplectic manifold.

\begin{lemma}
	Let \( g_0 \) be a compatible metric on an almost cosymplectic manifold \( (M, \alpha, \beta) \).
	Then, a \( 2 \)-tensor field \( H \) is the derivative \( g'(0) \) of a curve \( g(t) \) of compatible metrics with \( g(0) = g_0 \) if and only if \( H \) is symmetric and satisfies
	\begin{equation}
		\iota_{\Reebfield} H = 0
		,
		\quad\text{ and }\quad
		H(\phi \emptyslot, \emptyslot)
		=
		H(\emptyslot, \phi \emptyslot)
		.
		\label{eq:compatible-metrics-tangent-space}
	\end{equation}
	\label{thm:compatible-metrics-tangent-space}
\end{lemma}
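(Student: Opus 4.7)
The plan is to prove the two implications separately. For necessity, I will differentiate at $t = 0$ each of the identities that a compatible metric together with its associated $(1,1)$-tensor must satisfy. Symmetry of $H$ is immediate, and since $\Reebfield$ and $\alpha$ depend only on the almost cosymplectic structure, $\alpha(X) = g_t(\Reebfield, X)$ differentiates to $\iota_{\Reebfield} H = 0$. Differentiating $\beta(X, Y) = g_t(X, \phi_t Y)$ gives $g_0(X, \phi'(0) Y) = -H(X, \phi Y)$, while $\phi_t^2 = -\identityOperator + \alpha \tensorproduct \Reebfield$ gives $\phi'(0) \phi + \phi \phi'(0) = 0$. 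Pairing the latter with $g_0(X, \emptyslot)$, using antisymmetry of $\phi$ with respect to $g_0$, and substituting the preceding identity yields $H(X, Y) + H(\phi X, \phi Y) = 0$. Replacing $X$ by $\phi X$ and using $\iota_{\Reebfield} H = 0$ together with $\phi^2 = -\identityOperator + \alpha \tensorproduct \Reebfield$ converts this into the desired identity $H(\phi X, Y) = H(X, \phi Y)$.

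For sufficiency, given $H$ satisfying both conditions, I will define the $(1,1)$-tensor field $S$ by $g_0(X, SY) = H(X, Y)$. Then $S$ is $g_0$-symmetric (from symmetry of $H$), satisfies $S\Reebfield = 0$ (from $\iota_{\Reebfield} H = 0$), and anticommutes with $\phi$, i.e.\ $S\phi + \phi S = 0$; indeed, using antisymmetry of $\phi$ with respect to $g_0$, one computes $H(\phi X, Y) - H(X, \phi Y) = -g_0(X, (\phi S + S\phi) Y)$, which vanishes by hypothesis. Setting $A := -S/2$ and $G_t := \exp(tA)$, I will check that $A\Reebfield = 0$, $\alpha \circ A = 0$, and $\beta(AX, Y) + \beta(X, AY) = -\tfrac{1}{2} g_0(X, (S\phi + \phi S) Y) = 0$, so that $G_t$ fixes $\Reebfield$, $\alpha$, and $\beta$. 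The curve
\begin{equation*}
	\phi_t := G_t \phi G_t^{-1},
	\quad
	g_t(X, Y) := -\beta(X, \phi_t Y) + \alpha(X) \alpha(Y)
\end{equation*}
then satisfies all compatibility conditions (notably, $\phi_t^2 = G_t \phi^2 G_t^{-1} = -\identityOperator + \alpha \tensorproduct \Reebfield$ since $G_t$ preserves both $\alpha$ and $\Reebfield$), and $g_t$ remains positive definite for small $t$ by continuity. Using $[A, \phi] = \phi S$, which follows from $S\phi = -\phi S$, the derivative evaluates to $g'(0)(X, Y) = -\beta(X, \phi S Y) = g_0(X, SY) = H(X, Y)$, as required.

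The main obstacle is the sufficiency direction: the naive choice $g_t = g_0 + tH$ does not generally remain compatible, because the $\phi_t$ recovered from $\beta = g_t(\emptyslot, \phi_t \emptyslot)$ typically fails to square to $-\identityOperator + \alpha \tensorproduct \Reebfield$. The construction above circumvents this by exploiting the fact that compatible metrics are in bijection with $\beta$-compatible almost complex structures on $\ker\alpha$, a homogeneous space for the symplectic group of $(\ker\alpha, \beta)$, on which one can move by the exponential of a Lie algebra element. The hypotheses on $H$ are precisely what guarantee that the generator $A = -S/2$ lies in this symplectic Lie algebra while also vanishing on $\Reebfield$.
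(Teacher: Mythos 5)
Your proposal is correct and, modulo packaging, coincides with the paper's own proof: your necessity step rederives the identity \( g'(0) = -g'(0)(\phi\emptyslot, \phi\emptyslot) \) that the paper imports from its list of properties of compatible metrics, and in your sufficiency construction one finds \( g_t(X,Y) = -\beta(X,\phi_t Y) + \alpha(X)\alpha(Y) = g_0(G_t^{-1}X, G_t^{-1}Y) = g_0(e^{tS}X, Y) \) and \( \phi_t = G_t\phi G_t^{-1} = \phi\, e^{tS} \), which is exactly the curve \( g(t) = g_0(e^{tH^+}\emptyslot,\emptyslot) \), \( \phi(t) = \phi_0 e^{tH^+} \) that the paper writes down directly (with \( H^+ = S \)). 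A small bonus of this identification is that positive definiteness of \( g_t \) holds for all \( t \), not merely for small \( t \) ``by continuity'' (a justification that would be slightly delicate on a noncompact \( M \)), since \( g_t = g_0(e^{tS/2}\emptyslot, e^{tS/2}\emptyslot) \) by the \( g_0 \)-symmetry of \( S \).
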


Now, assume that \( g_0 \) is a critical compatible metric and define
\begin{equation*}
	H
	:=
	\nabla_\Reebfield \Liederivative \Reebfield g_0
	-
	2 (\Liederivative{\Reebfield} g_0)
	(
		\emptyslot,
		\difalphasharp \emptyslot
	)
	.
\end{equation*}
The observation now is that if we create a path \( g(t) \) of compatible metrics with \( g(0) = g_0 \) and \( g'(0) = H \), then \cref{thm:Chern--Hamilton-functional-first-variation} implies that
\begin{equation*}
\begin{split}
	0
	&=
	\odfrac{}{t} \evalAt[\bigg]{t=0}{
		E\bigl( g(t) \bigr)
	}
	=
	\int_M
	g_0\Bigl(
		\nabla_{\Reebfield} \Liederivative{\Reebfield} g(0)
		-
		2 (\Liederivative{\Reebfield} g(0))\bigl(
			\emptyslot,
			\difalphasharp \emptyslot
		\bigr)
		,
		g'(0)
	\Bigr)
	\, \alpha\wedge\beta^n
	\\
	&=
	\int_M
		g_0(H, H)
	\,\alpha\wedge\beta^n
	,
\end{split}
\end{equation*}
which is only possible if \( H \) is identically \( 0 \).
The existence of such a curve is provided by \cref{thm:compatible-metrics-tangent-space} after verifying that \( H \) as defined satisfies the assumptions of that \namecref{thm:compatible-metrics-tangent-space}.
This is a straightforward calculation, and the details are included in \cref{sec:compatible-metrics-tangent-space}.

\section{Properties and structure of critical metrics}
\label{sec:critical-metrics-properties}

In this section we prove various properties satisfied by critical metrics which we will need in the proof of \cref{thm:classification}.
A first easy observation about critical metrics is that the torsion is a first integral of the Reeb vector field. We recall that the torsion is defined as the scalar quantity \( \norm{\Liederivative{\Reebfield}g}^2 \).
\begin{lemma}
	If \( g \) is a critical metric on a compact cosymplectic manifold with Reeb vector field \( \Reebfield \), the torsion  is a first integral of \( \Reebfield \).
	\label{thm:scalar-torsion-is-first-integral-for-Reeb-field}
\end{lemma}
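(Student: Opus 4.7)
The statement follows almost immediately from the Euler--Lagrange equation established in \cref{thm:Euler--Lagrange-cosymplectic}, namely that critical compatible metrics on a cosymplectic manifold satisfy
\begin{equation*}
	\nabla_{\Reebfield}\Liederivative{\Reebfield}g = 0.
\end{equation*}
My plan is simply to differentiate the squared norm along \( \Reebfield \) and invoke metric compatibility of the Levi--Civita connection.

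More explicitly, write \( T := \Liederivative{\Reebfield}g \), which is a symmetric \( (0,2) \)-tensor field. Since the Levi--Civita connection \( \nabla \) associated with \( g \) is metric-compatible, it commutes with contractions involving \( g \), so in particular \( \nabla_{\Reebfield}(g \tensorproduct g) = 0 \) when contracting to form the pointwise inner product on tensors. Hence
\begin{equation*}
	\Reebfield\bigl(\norm{T}^{2}\bigr)
	=
	\nabla_{\Reebfield}\bigl(g(T, T)\bigr)
	=
	2\, g\bigl(\nabla_{\Reebfield} T, T\bigr).
\end{equation*}
Applying the critical point equation \( \nabla_{\Reebfield} T = \nabla_{\Reebfield}\Liederivative{\Reebfield}g = 0 \), the right-hand side vanishes. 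Therefore \( \Reebfield(\norm{\Liederivative{\Reebfield}g}^{2}) = 0 \), which is precisely the statement that the torsion is a first integral of the Reeb flow.

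There is no real obstacle here; the only subtlety worth being explicit about is that for a scalar function \( f \) on \( M \) one has \( \nabla_{\Reebfield}f = \Reebfield(f) = \Liederivative{\Reebfield}f \), so the vanishing of the covariant derivative along \( \Reebfield \) translates immediately into invariance under the Reeb flow. The main conceptual content is packaged in \cref{thm:Euler--Lagrange-cosymplectic}; this \namecref{thm:scalar-torsion-is-first-integral-for-Reeb-field} is a direct consequence and will be used later to pass from pointwise critical-point information to global structural statements about the Reeb dynamics.
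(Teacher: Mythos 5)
Your proof is correct and follows essentially the same route as the paper: differentiate \( \norm{\Liederivative{\Reebfield}g}^{2} = g(\Liederivative{\Reebfield}g, \Liederivative{\Reebfield}g) \) along \( \Reebfield \) using metric compatibility of the Levi--Civita connection, and then apply the Euler--Lagrange equation \( \nabla_{\Reebfield}\Liederivative{\Reebfield}g = 0 \) from \cref{thm:Euler--Lagrange-cosymplectic}. The paper's one-line proof is exactly this computation.
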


\begin{proof}
	The Levi--Civita connection is torsion free, so the critical point condition from \cref{thm:Euler--Lagrange-cosymplectic} implies that
	\begin{equation*}
		\Reebfield
		\big(
			\norm{\Liederivative{\Reebfield}g}^2
		\big)
		=
		2
		g(
			\nabla_\Reebfield \Liederivative{\Reebfield}g
			,
			\Liederivative{\Reebfield}g
		)
		=
		0
		.
	\end{equation*}
\end{proof}
Recall the tensor field \( h = \frac{1}{2} \Liederivative{\Reebfield}g \) from \cref{thm:LRphi-properties}.

\begin{lemma}
	Let \( g \) be a compatible metric on a compact cosymplectic manifold.
	Then the metric is critical if and only if \( \nabla_{\Reebfield} h = 0 \).
	\label{thm:critical-g-equivalent-to-nablaRh=0}
\end{lemma}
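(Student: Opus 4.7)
The plan is to reduce the criticality condition to an identity on $h$ by unpacking the Corollary \ref{thm:Euler--Lagrange-cosymplectic} using the relation between $\Liederivative{\Reebfield}g$, $h$ and $\phi$ from \cref{thm:LRphi-properties}. By \cref{thm:Euler--Lagrange-cosymplectic}, criticality is equivalent to $\nabla_{\Reebfield}\Liederivative{\Reebfield}g = 0$, and by \cref{thm:LRphi-properties} we have $\Liederivative{\Reebfield}g = 2g(\emptyslot, h\phi\emptyslot)$. So the goal reduces to showing that
\begin{equation*}
    \nabla_{\Reebfield}\bigl(g(\emptyslot, h\phi\emptyslot)\bigr) = 0
    \quad\Longleftrightarrow\quad
    \nabla_{\Reebfield}h = 0.
\end{equation*}

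The key preliminary step I would carry out is to prove that $\nabla_{\Reebfield}\phi = 0$. This follows directly from setting $X = \Reebfield$ in \cref{eq:phi-covariant-derivative}: the right-hand side becomes $g\bigl([\phi,\phi](Y,Z), \phi \Reebfield\bigr)$, and $\phi \Reebfield = 0$ by \cref{eq:compatible-metric-properties}, so $(\nabla_{\Reebfield}\phi)Y = 0$ for every $Y$. Since the Levi--Civita connection is metric, this allows us to compute
\begin{equation*}
    \nabla_{\Reebfield}\bigl(g(X, h\phi Y)\bigr)
    = g\bigl(X, (\nabla_{\Reebfield}h)\phi Y\bigr) + g\bigl(X, h(\nabla_{\Reebfield}\phi)Y\bigr)
    = g\bigl(X, (\nabla_{\Reebfield}h)\phi Y\bigr),
\end{equation*}
so the criticality condition becomes the pointwise statement $(\nabla_{\Reebfield}h)\phi = 0$.

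The remaining step is to upgrade $(\nabla_{\Reebfield}h)\phi = 0$ to $\nabla_{\Reebfield}h = 0$. Since $\phi$ has image equal to $\ker\alpha$, the vanishing of $(\nabla_{\Reebfield}h)\phi$ tells us that $\nabla_{\Reebfield}h$ vanishes on $\ker\alpha$. To handle the remaining direction, I would evaluate on the Reeb field: using $h(\Reebfield) = 0$ from \cref{thm:LRphi-properties} and $\nabla_{\Reebfield}\Reebfield = 0$ from \cref{eq:compatible-metric-properties}, we obtain
\begin{equation*}
    (\nabla_{\Reebfield}h)(\Reebfield) = \nabla_{\Reebfield}\bigl(h(\Reebfield)\bigr) - h(\nabla_{\Reebfield}\Reebfield) = 0.
\end{equation*}
Combined with the vanishing on $\ker\alpha$ and the splitting $TM = \mathbb{R}\Reebfield \oplus \ker\alpha$, this gives $\nabla_{\Reebfield}h = 0$. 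The converse is immediate from the same chain of equalities.

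Overall there is no serious obstacle here; the only substantive ingredient is the identity $\nabla_{\Reebfield}\phi = 0$, and the rest is linear algebra exploiting that $\phi$ has kernel $\mathbb{R}\Reebfield$ and image $\ker\alpha$.
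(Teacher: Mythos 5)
Your proposal is correct and follows essentially the same route as the paper: both reduce criticality via \cref{thm:Euler--Lagrange-cosymplectic} to the vanishing of \( \nabla_{\Reebfield}(h\phi) \), use \( \nabla_{\Reebfield}\phi = 0 \) (from \cref{eq:phi-covariant-derivative} with \( X = \Reebfield \)) to identify this with \( (\nabla_{\Reebfield}h)\phi = 0 \), and then conclude \( \nabla_{\Reebfield}h = 0 \) by combining the fact that \( \phi \) is an automorphism of \( \ker\alpha \) with the separate observation \( (\nabla_{\Reebfield}h)(\Reebfield) = 0 \). No gaps; your explicit justification of \( \nabla_{\Reebfield}\phi = 0 \) is a detail the paper leaves implicit.
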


\begin{proof}
	Use \cref{thm:LRphi-properties} to calculate that for any vector fields \( X \), \( Y \),
	\begin{equation}
	\begin{split}
		\frac{1}{2}(\nabla_{\Reebfield} \Liederivative{\Reebfield} g)(X, Y)
		&=
		\nabla_{\Reebfield}(
			g(X, h \phi Y)
		)
		-
		g(\nabla_{\Reebfield} X, h \phi Y)
		-
		g(X, h \phi \nabla_{\Reebfield} Y)
		\\
		&=
		g(X, (\nabla_{\Reebfield}(h\phi))(Y))
		=
		g(X, (\nabla_{\Reebfield}h)(\phi Y))
		,
	\end{split}
	\label{eq:nablaRLRg}
	\end{equation}
	since \( \nabla_{\Reebfield} \phi = 0\) by \cref{eq:phi-covariant-derivative}.
	The kernel of \( \alpha \) and \( \Reebfield \) span the tangent spaces, and on \( \ker\alpha \), \( \phi \) is an automorphism.
	At the same time, since \( h(\Reebfield) = 0 \) and \( \Reebfield \) is parallel, \( (\nabla_\Reebfield h)(\Reebfield) \) always vanishes.
	The left hand side of \cref{eq:nablaRLRg} vanishes if and only if the metric is critical (by \cref{thm:Euler--Lagrange-cosymplectic}), so this proves that the vanishing of \( \nabla_{\Reebfield} h \) is equivalent to the metric being critical.
\end{proof}

We will now focus on the \( 3 \)-dimensional case to prove \cref{thm:classification}.
Recall from \cref{thm:LRphi-properties} that the tensor field \( h = \frac{1}{2} \Liederivative{\Reebfield} \phi \) always has a local frame of eigenvector fields with eigenvalues \( (0, \mu, -\mu) \), where \( \mu = 2^{-3/2} \norm{\Liederivative{\Reebfield}g} \).

\begin{lemma}
	Let \( g \) be a critical metric.
	If \( (\Reebfield, u_{+}, u_{-} = \phi u_{+}) \) is a local orthonormal frame of eigenvector fields of \( h \) (i.e., \( h u_{\pm} = \pm \mu u_{\pm} \)) with $\mu = 2^{-3/2} \norm{\Liederivative{\Reebfield}g} > 0$, then
	\begin{equation}
		[\Reebfield, u_{\pm}]
		=
		\mu
		u_{\mp}
		\label{eq:upm-Lie-brackets}
		.
	\end{equation}
	\label{thm:upm-Lie-brackets}
	
\end{lemma}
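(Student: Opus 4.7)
The plan is to show that $u_+$ and $u_-$ are parallel along $R$, and then expand the Lie bracket using the Levi--Civita connection. The starting point is the pair of facts from the immediately preceding lemmas: since $g$ is critical, $\nabla_R h = 0$ by \cref{thm:critical-g-equivalent-to-nablaRh=0}, and the torsion $\norm{\Liederivative{R}g}^2$ is $R$-invariant by \cref{thm:scalar-torsion-is-first-integral-for-Reeb-field}. Because $\mu = 2^{-3/2}\norm{\Liederivative{R}g}$, the eigenvalue $\mu$ is constant along the flow of $R$.

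Next I would verify that the eigenspaces of $h$ are preserved under covariant differentiation in the $R$ direction. Writing out $\nabla_R h = 0$ as $\nabla_R(hX) = h(\nabla_R X)$ and applying it to $X = u_\pm$, one gets $h(\nabla_R u_\pm) = \nabla_R(\pm \mu u_\pm) = \pm\mu\, \nabla_R u_\pm$, using that $R(\mu) = 0$. Thus $\nabla_R u_\pm$ lies in the $\pm\mu$-eigenline of $h$, which near the point is spanned by $u_\pm$. Since $u_\pm$ is a unit vector, $g(\nabla_R u_\pm, u_\pm) = \tfrac{1}{2} R(g(u_\pm, u_\pm)) = 0$, and we conclude that $\nabla_R u_\pm = 0$.

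The Lie bracket now becomes $[R, u_\pm] = \nabla_R u_\pm - \nabla_{u_\pm} R = -\nabla_{u_\pm} R$. Using $\nabla R = h\phi$ from \cref{thm:LRphi-properties} together with $\phi u_+ = u_-$ and $\phi u_- = -u_+$ (which follows from $u_- = \phi u_+$ and $\phi^2 = -\identityOperator + \alpha \tensorproduct R$ applied to $u_+$ with $\alpha(u_+) = 0$), I compute $\nabla_{u_+} R = h(u_-) = -\mu u_-$ and $\nabla_{u_-} R = h(-u_+) = -\mu u_+$. Substituting yields $[R, u_\pm] = \mu u_\mp$, as required.

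The argument is essentially a direct unwinding once one has the two preceding lemmas in hand; the only mildly subtle step is observing that $\mu$ being $R$-invariant is precisely what is needed to conclude that $\nabla_R$ preserves the eigenspaces of $h$, rather than merely the kernel of $h - \mu \identityOperator$ moving to a nearby eigenspace. No obstacle that I can see beyond keeping the signs straight in the action of $\phi$ on $u_\pm$.
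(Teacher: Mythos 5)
Your proof is correct and follows essentially the same route as the paper's: use $\nabla_\Reebfield h = 0$ and the $\Reebfield$-invariance of $\mu$ to show $\nabla_\Reebfield u_\pm$ stays in the $\pm\mu$-eigenline, kill it with the unit-norm condition, and then compute the bracket from $\nabla_{u_\pm}\Reebfield = h\phi u_\pm = -\mu u_\mp$. The signs in your computation of $\phi u_\pm$ and $h u_\mp$ all check out.
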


\begin{proof}
	For the Lie brackets, using the last identity of \cref{eq:LRphi-properties}, we have that
	\begin{equation*}
		\nabla_{u_{\pm}} \Reebfield
		=
		h\phi u_{\pm}
		=
		\pm h u_{\mp}
		=
		-
		\mu u_{\mp}
		.
	\end{equation*}
	Since \( g \) is critical, \( \nabla_{\Reebfield}h = 0 \) by \cref{thm:critical-g-equivalent-to-nablaRh=0}, so
	\begin{equation*}
		h(\nabla_{\Reebfield} u_{\pm})
		=
		\nabla_{\Reebfield}(h u_{\pm})
		=
		\pm \mu \nabla_{\Reebfield} u_{\pm}
		,
	\end{equation*}
	since \( \nabla_{\Reebfield}\mu = 0 \) by \cref{thm:scalar-torsion-is-first-integral-for-Reeb-field}.
	The \( \pm \mu \)-eigenspace of \( h \) is spanned by \( u_{\pm} \), but \( g(u_{\pm}, \nabla_{\Reebfield} u_{\pm}) = 0 \), so we conclude that \( \nabla_{\Reebfield} u_{\pm} = 0 \).
	This allows us to calculate
	\begin{equation}
		[\Reebfield, u_{\pm}]
		=
		\nabla_{\Reebfield} u_{\pm}
		-
		\nabla_{u_{\pm}} \Reebfield
		=
		0
		-
		(-\mu u_{\mp})
		=
		\mu u_{\mp}
		,
		\label{eq:Lie-bracket-Reeb-h-eigenvectors}
	\end{equation}
	which is what we wanted to show.
\end{proof}

\Cref{thm:upm-Lie-brackets} has a remarkable implication on the dynamics of the Reeb field of a cosymplectic structure in the regions where \( \norm{\Liederivative{\Reebfield}g}^2 \) is positive.
For the statement, we need to recall the definition of a hyperbolic set.

\begin{definition}
	Let \( X \) be a vector field and \( K \) a compact set invariant under the flow \( \Phi^t \) of \( X \).
	Then \( K \) is called hyperbolic for \( X \) if there exists a constant \( c > 0 \), a number \( 0 < \lambda < 1 \), a metric (on a neighbourhood of \( K \)), as well as a \( \Phi^{t} \)-invariant splitting (not necessarily smooth) \( TM = \langle X \rangle \oplus E_+ \oplus E_- \) such that
	\begin{equation*}
		\norm{\Phi^{\mp t}_* v}^2
		\leq
		c \lambda^{t}
		\norm{v}^2
	\end{equation*}
	for all \( t \) whenever \( v \in E_\pm \).
	The bundles \( E_{+} \) and \( E_{-} \) are respectively called the strong unstable and strong stable bundles of the flow.

	  Additionally, it is said that a flow on a compact manifold is \emph{Anosov} if the entire manifold is a hyperbolic set for the flow.
	In the same way, a vector field is Anosov if its flow is.
	Lastly, a diffeomorphism \( f \) of \( M \) is Anosov if there is an \( f \)-invariant splitting \( TM = E_{+} \oplus E_{-} \) satisfying the above condition with \( t \) replaced by \( n \in \mathbb{Z} \).
	\label{def:hyperbolic-set-and-Anosov}
\end{definition}
\begin{lemma}
	Let \( (M, \alpha, \beta) \) be a compact cosymplectic manifold with Reeb vector field \( \Reebfield \) equipped with a critical compatible metric \( g \).
	Let \( K \subseteq M \) be a compact set that is invariant under the flow of $R$ and assume that on \( K \) the torsion is nowhere vanishing, i.e., \( \norm{\Liederivative{\Reebfield}g}^2 > 0 \).
	Then \( K \) is a hyperbolic set for \( \Reebfield \) and the hyperbolic splitting is by smooth line bundles.
	\label{thm:critical-metric-with-positive-torsion-is-hyperbolic}
\end{lemma}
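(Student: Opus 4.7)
The plan is to build an explicit smooth hyperbolic splitting of $TM|_K$ from the local eigendata of $h$, and then convert the bracket identities of \cref{thm:upm-Lie-brackets} into sharp exponential estimates by integrating a linear ODE along the Reeb flow. Since $K$ is compact and $\norm{\Liederivative{\Reebfield}g}^2>0$ on $K$, the function $\mu=2^{-3/2}\norm{\Liederivative{\Reebfield}g}$ is bounded below by some $\mu_0>0$. On a neighbourhood of $K$ the smooth symmetric tensor $h$ then has three pairwise distinct eigenvalues $0,+\mu,-\mu$, so its eigenspaces form smooth line bundles: $\langle\Reebfield\rangle$ globally, and $\langle u_+\rangle$, $\langle u_-\rangle$ locally, with the only freedom in a unit pair $(u_+,u_-=\phi u_+)$ being a simultaneous sign flip $(u_+,u_-)\mapsto(-u_+,-u_-)$.

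Next, I would diagonalize the bracket action. By \cref{eq:upm-Lie-brackets}, $[\Reebfield,u_\pm]=\mu u_\mp$, so setting
\[
e_\pm:=\tfrac{1}{\sqrt{2}}(u_+\pm u_-)
\]
yields a local orthonormal frame $(\Reebfield,e_+,e_-)$ with $[\Reebfield,e_\pm]=\pm\mu\,e_\pm$. The sign flip above takes $e_\pm$ to $-e_\pm$, so $E^s:=\langle e_+\rangle$ and $E^u:=\langle e_-\rangle$ are globally defined smooth line bundles on $K$, and they are $\Phi^t$-invariant directly from the bracket relations.

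To obtain the exponential estimates I would use the standard fact that if $[\Reebfield,Y]=fY$ with $f$ a first integral of $\Reebfield$, then solving the linear ODE $\tfrac{d}{dt}(\Phi^{-t})_*Y|_{\Phi^t(p)}=f(p)\,(\Phi^{-t})_*Y|_{\Phi^t(p)}$ in $T_pM$ gives $(\Phi^t)_*Y|_p=e^{-f(p)t}Y|_{\Phi^t(p)}$. By \cref{thm:scalar-torsion-is-first-integral-for-Reeb-field} the function $\mu$ is such a first integral, so applying this to $e_\pm$ and combining with $\mu\geq\mu_0$ on $K$ yields $\norm{\Phi^t_*v}\leq e^{-\mu_0 t}\norm{v}$ for $v\in E^s$ and $\norm{\Phi^{-t}_*v}\leq e^{-\mu_0 t}\norm{v}$ for $v\in E^u$, for every $t\geq 0$. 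This matches \cref{def:hyperbolic-set-and-Anosov} with $c=1$ and $\lambda=e^{-2\mu_0}$. The main subtlety I anticipate is the global consistency of the splitting: one must verify that the sign ambiguity in the local frames $u_\pm$ does not obstruct the smooth globalization of $E^s$ and $E^u$, which is precisely the point settled in the previous paragraph. Beyond this, everything reduces to the bracket calculations of \cref{thm:upm-Lie-brackets} and a first-order linear ODE.
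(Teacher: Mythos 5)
Your proof is correct and follows essentially the same route as the paper: both build the smooth global splitting from the eigendirections of \( h \) via the diagonal combinations \( u_{+} \pm u_{-} \) (whose simultaneous sign ambiguity leaves the line bundles well defined), and both convert the bracket relations of \cref{thm:upm-Lie-brackets} together with the first-integral property of \( \mu \) into the exact rates \( \norm{\Phi^{t}_{*}v} = e^{\mp\mu t}\norm{v} \). The only cosmetic difference is that you integrate the linear ODE along the whole orbit directly, whereas the paper establishes the identity for small times and extends it by the group property of the flow.
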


\begin{proof}
	The torsion is assumed positive on \( K \), and thus on an open neighbourhood of \( K \), so around any point of \( K \) a local orthonormal frame \( (\Reebfield, u_{\pm}) \) of eigenvectors of \( h \) exists, see \cref{thm:LRphi-properties}.
	Orienting it by choosing \( u_{-} = \phi u_{+} \), the local frame is unique up to replacing \( (u_{+}, u_{-}) \) by \( (-u_{+}, -u_{-}) \).
	Now do a basis change to \( v_{\pm} = u_{+} \pm u_{-} \).
	The uniqueness up to sign of the \( u_{\pm} \) implies that the line bundles \( E_{\pm} \) locally spanned by the \( v_{\pm} \) are global, this gives a global splitting of the manifold which is locally given as \( \langle \Reebfield \rangle \oplus \langle v_{+} \rangle \oplus \langle v_{-} \rangle \).
	The \( v_{\pm} \) are smooth since the eigenvalues \( \pm \mu = \pm 2^{-3/2}\norm{\Liederivative{\Reebfield}g} \) of \( h \) are distinct, so the splitting is also.
	
	Let \( \Phi^{t} \) denote the flow of the Reeb vector field, let \( p \in K \), and let \( t \) be small enough that \( \Phi^{t}(p) \) still lies in the neighbourhood of \( p \) where the local vector fields \( v_{\pm} \) are defined.
	By \cref{thm:upm-Lie-brackets}, \( [\Reebfield, v_{\pm}] = \pm \mu v_{\pm} \), so \( \Phi^{t}_{*} (v_{\pm})_p = e^{\mp \mu t}(v_{\pm})_{\Phi^{t}(p)} \) since \( \mu \) is a first integral of the Reeb flow by \cref{thm:scalar-torsion-is-first-integral-for-Reeb-field}.
	This both implies that \( \norm{\Phi^{t}_{*} v_{\pm}} = e^{\mp \mu t} \norm{v_{\pm}} = e^{\mp \mu t} \) for such small enough \( t \) and that the \( E_{\pm} \) are invariant line bundles.

	Since \( K \) is compact there is some \( \epsilon > 0 \) such that for all \( \abs{t} < \epsilon \) and all \( v \in E_{\pm} \), \( \norm{\Phi^{t}_{*} v} = e^{\mp \mu t} \norm{v} \).
	Now let \( p \in K \), \( v \in (E_{\pm})_{p} \) and \( t > 0 \) be given.
	Then choose an \( n \) sufficiently large to have \( \abs{t/n} < \epsilon \) to calculate that
	\begin{equation*}
		\norm{
			\Phi^{t}_{*} v
		}
		=
		\norm{
			\Phi^{t/n}_{*}\big( 
				\Phi^{(n-1)t/n}_{*}(v)
			\big)
		}
		=
		e^{\mp t/n}
		\norm{
			\Phi^{(n-1)t/n}_{*}(v)
		}
		=
		\cdots
		=
		e^{\mp t} \norm{v}
		.
	\end{equation*}
	This shows that \( K \) is a hyperbolic set for the Reeb flow.
\end{proof}

The following is immediate from \cref{thm:critical-metric-with-positive-torsion-is-hyperbolic}.

\begin{corollary}
	Let \( (M, \alpha, \beta) \) be a compact cosymplectic manifold equipped with a compatible metric \( g \) with nowhere vanishing torsion.
	Then the Reeb vector field is Anosov with a smooth hyperbolic splitting.
	The stable direction is the eigendirection of the operator \( h\phi \) corresponding to the positive eigenvalue \( \mu = 2^{-3/2}\norm{\Liederivative{\Reebfield}g} \), the unstable direction the one corresponding to \( -\mu \).
	\label{thm:stable-unstable-directions-as-kernels-of-global-tensors}
\end{corollary}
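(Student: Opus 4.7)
The plan is to specialize \cref{thm:critical-metric-with-positive-torsion-is-hyperbolic} to the case $K = M$. Since $M$ is compact and trivially invariant under the Reeb flow, and since the torsion $\norm{\Liederivative{\Reebfield}g}^{2}$ is strictly positive at every point of $M$ by hypothesis, all the assumptions of that lemma are met with $K = M$. Its conclusion is therefore that $M$ itself is a hyperbolic set for $\Reebfield$, equipped with a smooth hyperbolic splitting $TM = \langle \Reebfield \rangle \oplus E_{+} \oplus E_{-}$. By \cref{def:hyperbolic-set-and-Anosov} this is precisely the statement that $\Reebfield$ is Anosov with a smooth hyperbolic splitting, which is the first half of the claim.

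For the second half, I would identify $E_{\pm}$ with eigendirections of the operator $h\phi$. The first observation is that $h\phi$ is a globally defined smooth symmetric endomorphism of $TM$: smoothness is inherited from $h = \tfrac{1}{2}\Liederivative{\Reebfield}\phi$ and $\phi$, while symmetry follows from the symmetry of $h$, the skew-symmetry of $\phi$ on $\ker\alpha$, and the anticommutation $h\phi + \phi h = 0$ recorded in \cref{thm:LRphi-properties}. The second step is a pointwise computation in a local $h$-eigenframe $(\Reebfield, u_{+}, u_{-})$ with $h u_{\pm} = \pm\mu u_{\pm}$ and $u_{-} = \phi u_{+}$; using $\phi^{2} = -\identityOperator + \alpha \tensorproduct \Reebfield$ together with $\alpha(u_{\pm}) = 0$, which forces $\phi u_{-} = -u_{+}$, one verifies that $h\phi$ preserves $\ker\alpha$ with eigenvalues $\pm\mu$ and that the eigenvectors are exactly the combinations $v_{\pm} = u_{+} \pm u_{-}$ that were shown in the proof of the previous lemma to locally generate the hyperbolic distributions $E_{\pm}$. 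Matching these two descriptions of the same distinguished lines yields the desired identification of $E_{\pm}$ with eigendirections of $h\phi$ corresponding to the eigenvalues $\pm\mu$.

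The global smoothness of the splitting then comes for free: the two eigenvalues $\pm\mu$ of $h\phi$ restricted to $\ker\alpha$ are distinct at every point of $M$ (because $\mu > 0$ everywhere by hypothesis), so the corresponding eigenspaces assemble into smooth line bundles over all of $M$, recovering intrinsically the smoothness of $E_{\pm}$ already guaranteed by the preceding lemma. There is essentially no obstacle in this step: the genuine analytic content, namely the dynamical hyperbolicity, is contained in \cref{thm:critical-metric-with-positive-torsion-is-hyperbolic}, and the remaining work here is fiberwise linear algebra on the intrinsic tensor $h\phi$, together with the observation that the pointwise identification globalizes because $h\phi$ is itself a global tensor.
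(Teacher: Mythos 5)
Your proposal is correct and is exactly the paper's intended argument: the paper simply declares the corollary ``immediate from \cref{thm:critical-metric-with-positive-torsion-is-hyperbolic}'' applied with \( K = M \), and your fiberwise computation identifying \( E_{\pm} \) with the \( \mp\mu \)-eigendirections of \( h\phi \) via \( v_{\pm} = u_{+} \pm u_{-} \) supplies the routine linear algebra the paper leaves implicit. The only caveat is that invoking that lemma requires \( g \) to be \emph{critical} (so that \cref{thm:upm-Lie-brackets} applies), a hypothesis the corollary's statement omits but which is clearly intended from the context; your phrase ``all the assumptions of that lemma are met'' silently absorbs this.
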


Using \cref{thm:critical-metric-with-positive-torsion-is-hyperbolic} we can also prove that for a critical metric, \( \norm{\Liederivative{\Reebfield}g}^2 \) is in fact a constant.
This was proven for contact manifolds as part of the classification of critical metrics~\cite{hozooriAnosovContactMetrics2023, mitsumatsuExistenceCriticalCompatible2025}.

\begin{lemma}
	Let \( (M, \alpha, \beta) \) be a compact cosymplectic manifold equipped with a compatible metric \( g \) which is critical for the Chern--Hamilton functional.
	Then the torsion \( \norm{\Liederivative{\Reebfield}g}^2 \) is constant.
	\label{thm:Lagrangian-density-is-constant-for-critical-metrics}
\end{lemma}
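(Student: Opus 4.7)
My plan is to show that the function $\mu := 2^{-3/2}\norm{\Liederivative{\Reebfield}g}$ is constant; since $\norm{\Liederivative{\Reebfield}g}^2 = 8\mu^2$ by \cref{thm:LRphi-properties}, the lemma follows. From \cref{thm:scalar-torsion-is-first-integral-for-Reeb-field} we already know $\Reebfield(\mu) = 0$, so the task is to show that $\mu$ is also invariant in the two directions transverse to the Reeb field. I would work on the open, $\Reebfield$-invariant set $U := \{\mu > 0\}$, where \cref{thm:LRphi-properties} provides a local orthonormal frame $(\Reebfield, u_+, u_-)$ of eigenvectors of $h$; outside $U$, $\mu$ is trivially constant and equal to zero. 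The key tool will be the commutator relations $[\Reebfield, u_\pm] = \mu u_\mp$ from \cref{thm:upm-Lie-brackets}, which translate into an ODE along Reeb orbits for the transverse derivatives of $\mu$.

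Explicitly, applying the identity $[\Reebfield, u_\pm]=\mu u_\mp$ to $\mu$ and using $\Reebfield(\mu) = 0$ gives $\Reebfield(u_\pm(\mu)) = \mu\, u_\mp(\mu)$ on $U$. Setting $F := u_+(\mu)$ and $G := u_-(\mu)$, their restrictions to the Reeb orbit through any $p \in U$ satisfy the constant-coefficient linear system
\begin{equation*}
\phi' = \mu_0\, \psi, \qquad \psi' = \mu_0\, \phi,
\end{equation*}
where $\mu_0 := \mu(p) > 0$ is constant along the orbit (again by \cref{thm:scalar-torsion-is-first-integral-for-Reeb-field}). Since $U$ is a level set of the first integral $\norm{\Liederivative{\Reebfield}g}^2$, the orbit remains in $U$ for all $t \in \mathbb{R}$, and because $M$ is compact, the smooth functions $F$ and $G$ are bounded along it. The general solution of the system is a linear combination of $\cosh(\mu_0 t)$ and $\sinh(\mu_0 t)$, which is bounded only if it vanishes identically. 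Hence $F(p) = G(p) = 0$, and since $p \in U$ was arbitrary, $d\mu = 0$ throughout $U$, so $\mu$ is locally constant there.

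To finish, I use a connectedness argument. On any connected component $V$ of $U$, $\mu$ equals some positive constant $c > 0$; by continuity of $\mu$ on $M$ this value persists on $\bar V$, forcing $\bar V \subseteq U$, so $V$ itself is closed, hence clopen in $M$. Assuming $M$ is connected (and otherwise arguing on each connected component separately), this yields the dichotomy $U = \emptyset$, in which case $\mu \equiv 0$, or $U = M$, in which case $\mu \equiv c > 0$; either way $\norm{\Liederivative{\Reebfield}g}^2$ is constant. The main obstacle is really conceptual rather than technical: one must spot the correct ODE hidden in the bracket relations of \cref{thm:upm-Lie-brackets} and recognize that the same exponential behaviour that makes the Reeb flow Anosov on regions of positive torsion (\cref{thm:critical-metric-with-positive-torsion-is-hyperbolic}) also forbids nontrivial bounded transverse derivatives of $\mu$ once compactness of $M$ is invoked.
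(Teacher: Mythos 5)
Your argument is correct, but it takes a genuinely different route from the paper. The paper's proof is dynamical and relies on an external result: assuming the torsion is non-constant, Sard's theorem produces a positive regular value whose level set is a compact invariant surface; a neighbourhood of it is a hyperbolic set by \cref{thm:critical-metric-with-positive-torsion-is-hyperbolic}, and the nonexistence of invariant surfaces for codimension-one Anosov flows on compact manifolds (a cited theorem of Aguilar) yields a contradiction. You instead differentiate \( \mu \) transversally: applying \( [\Reebfield, u_{\pm}] = \mu u_{\mp} \) to \( \mu \) and using \( \Reebfield(\mu)=0 \) gives the linear system \( \Reebfield(F) = \mu G \), \( \Reebfield(G) = \mu F \) for \( F = u_{+}(\mu) \), \( G = u_{-}(\mu) \), whose solutions along an orbit grow like \( e^{\pm\mu_0 t} \) unless they vanish, which boundedness on the compact \( M \) forbids. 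This is more elementary and self-contained — it avoids the citation entirely — and it is exactly the technique the paper itself uses one lemma later to prove \( [v_{+}, v_{-}]=0 \) in \cref{thm:vpm-Lie-brackets} (there the exponentially growing quantities are the structure functions \( a_{\pm} \)). The one wrinkle you should acknowledge is that \( u_{\pm} \) are only defined locally up to a simultaneous sign, so \( F \) and \( G \) are not a priori global along an orbit; this is harmless — either pull the eigenline bundle back to the parametrized orbit \( t \mapsto \Phi^{t}(p) \), where it trivializes, or argue with the globally defined quantities \( (F \pm G)^{2} \), which satisfy \( \bigl((F\pm G)^{2}\bigr)' = \pm 2\mu_{0} (F\pm G)^{2} \) — and it is the same issue the paper handles via \( \abs{a_{\pm}} \) in \cref{thm:vpm-Lie-brackets}. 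With that noted, your proof stands; what the paper's approach buys in exchange is that it reuses the already-established hyperbolicity statement and situates the result within standard Anosov theory, while yours trades that machinery for a short explicit computation.
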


\begin{proof}
If the torsion vanishes everywhere, the result is trivial.
	Otherwise, by Sard's theorem, it will have a (positive) regular value \( c \).
	The preimage of this value will be a \( 2 \)-dimensional compact submanifold, which we can assume to be connected without any loss of generality (otherwise, take a connected component).

	Additionally, since $M$ is compact, the regular values form an open set of $\mathbb{R}$, so \( K:= \left(\norm{\Liederivative{\Reebfield}g}^2\right)^{-1}( [c-\epsilon, c+\epsilon] ) \) is a compact smooth submanifold with boundary on which the torsion is strictly positive, for $\epsilon > 0$ small enough.
	Since the torsion is a first integral of the Reeb flow, each level set is invariant under the Reeb flow. In particular, $K$ is a compact invariant set.
	By \cref{thm:critical-metric-with-positive-torsion-is-hyperbolic} \( K \) is hyperbolic for the Reeb field.
	In~\cite{aguilarNonexistenceCodimensionOne2005} it is shown that codimension-one Anosov flows (i.e., flows where either the stable or unstable direction is \(1\)-dimensional—in our case, both are) on compact manifolds cannot have invariant surfaces.
	Therefore, we conclude that the torsion must be constant on the whole manifold.
\end{proof}

The above \namecref{thm:Lagrangian-density-is-constant-for-critical-metrics} shows that there are only two types of Reeb vector fields on manifolds equipped with critical metrics: either the Reeb vector field is Killing or \( \norm{\Liederivative{\Reebfield}g}^2 \) is some positive constant and the Reeb vector field is Anosov.
To finish the proof of \cref{thm:classification} we are thus only missing to describe the manifolds admitting critical metrics with (constant) positive torsion.

\section{Proof of the main theorem}
\label{sec:proof}
In this \namecref{sec:proof} we shall complete the proof of \cref{thm:classification}.
In particular, we assume that all manifolds are \( 3 \)-dimensional.
First, observe that in this case, a co-Kähler structure is the same as a compatible metric with \( \Liederivative{\Reebfield}g = 0 \)~\cite{goldbergIntegrabilityAlmostCosymplectic1969}.
The energy functional is non-negative, so such a metric minimizes the functional and is therefore also critical.

On the other hand, according to \cref{thm:Lagrangian-density-is-constant-for-critical-metrics}, any critical metric has constant torsion, so any other critical metric will have constant positive torsion.
Such metrics will be classified in \cref{sec:critical-metrics-Anosov}, and in \cref{sec:positive-torsion-critical-metric-minimizes-energy} we show that they also minimize the functional.
This will be done by expressing the energy of arbitrary compatible metrics in terms of the energy of critical ones.
These steps cover all the claims made in \cref{thm:classification}.

\subsection{Critical metrics with non-vanishing torsion}
\label{sec:critical-metrics-Anosov}

This \namecref{sec:critical-metrics-Anosov} describes the structure of \( 3 \)-dimensional, compact cosymplectic manifolds equipped with critical metrics with constant, positive torsion \( \norm{\Liederivative{\Reebfield}g}^2 > 0 \).
By \cref{thm:LRphi-properties,thm:Lagrangian-density-is-constant-for-critical-metrics}, \( h = (1/2) \Liederivative{\Reebfield}\phi \) is then orthogonally diagonalizable at all points with constant eigenvalues \( 0 \) and \( \pm \mu \), where \( \mu = 2^{-3/2} \norm{\Liederivative{\Reebfield}g} \).
The eigendirections are uniquely defined due to the three eigenvalues being distinct, so in a neighbourhood of every point \( h \) there is a local orthonormal eigenframe \( (R, u_{+}, u_{-} = \phi u_{+}) \).
Let us first investigate the structure of the vector fields \( v_{\pm} := u_{+} \pm u_{-} \).

\begin{lemma}\label{L:brackets}
	Let \( g \) be a critical metric with positive torsion.
	Then,
	\begin{equation*}
		[\Reebfield, v_{\pm}]
		=
		\pm \mu v_{\pm}
		,\quad\text{and}\quad
		[v_{+}, v_{-}] = 0
		,
	\end{equation*}
	where \( \mu = 2^{-3/2} \norm{\Liederivative{\Reebfield}g} \).
	\label{thm:vpm-Lie-brackets}
\end{lemma}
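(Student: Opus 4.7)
The first identity will follow immediately from \cref{thm:upm-Lie-brackets} by bilinearity. Since $\mu$ is constant (\cref{thm:Lagrangian-density-is-constant-for-critical-metrics}) and $[\Reebfield, u_\pm] = \mu u_\mp$, one has $[\Reebfield, u_+ \pm u_-] = \mu u_- \pm \mu u_+ = \pm\mu v_\pm$. The remainder of the proof concentrates on showing $[v_+, v_-] = 0$.

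By antisymmetry and bilinearity, $[v_+, v_-] = [u_+ + u_-, u_+ - u_-] = -2[u_+, u_-]$, so it suffices to prove $[u_+, u_-] = 0$. I would first use the closedness of $\alpha$: since $d\alpha = 0$ and $\alpha(u_\pm) = 0$, Cartan's formula yields $\alpha([u_+, u_-]) = 0$, so that $[u_+, u_-]$ lies in $\ker \alpha$ and can be written locally as $f u_+ + g u_-$ for smooth functions $f, g$.

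To eliminate $f$ and $g$ I would invoke the Jacobi identity applied to $(\Reebfield, u_+, u_-)$. Using \cref{thm:upm-Lie-brackets} and the constancy of $\mu$, both mixed terms $[u_+, [u_-, \Reebfield]] = [u_+, -\mu u_+]$ and $[u_-, [\Reebfield, u_+]] = [u_-, \mu u_-]$ vanish, giving $[\Reebfield, [u_+, u_-]] = 0$. Expanding $[\Reebfield, f u_+ + g u_-]$ and equating components yields the coupled system $\Reebfield(f) = -\mu g$, $\Reebfield(g) = -\mu f$. Although $f$ and $g$ depend on the local sign choice in the eigenframe (the flip $(u_+, u_-) \mapsto (-u_+, -u_-)$, correlated through $u_- = \phi u_+$), the quantity $\psi := f^2 + g^2 = \norm{[u_+, u_-]}^2$ is invariant and defines a global nonnegative smooth function on $M$, satisfying $\Reebfield^2(\psi) = (2\mu)^2 \psi$.

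The main obstacle, and the place where compactness enters, is the passage from this ODE to $\psi \equiv 0$. Along each Reeb orbit, $\psi(t)$ takes the form $A e^{2\mu t} + B e^{-2\mu t}$. Since $M$ is compact the Reeb flow is complete and $\psi$ is bounded along every orbit, while $\psi \geq 0$ and $\mu > 0$ together rule out any nonzero $A$ or $B$ (otherwise the solution either blows up or changes sign). Hence $\psi \equiv 0$ on $M$, so $f = g = 0$ and $[u_+, u_-] = 0$, yielding $[v_+, v_-] = 0$.
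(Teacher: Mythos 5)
Your proof is correct. It rests on the same two pillars as the paper's argument: the bracket relations of \cref{thm:upm-Lie-brackets} together with the constancy of \( \mu \), and the incompatibility of exponential behaviour along the Reeb flow with compactness. The execution of the middle step differs. The paper works directly with \( [v_+, v_-] = a_+ v_+ + a_- v_- \) and integrates \( [\Reebfield, v_\pm] = \pm\mu v_\pm \) to \( \Phi^t_* v_\pm = e^{\mp\mu t} v_\pm \), from which the functional equation \( |a_\pm \circ \Phi^t| = |a_\pm| e^{\mp\mu t} \) follows at once and kills \( a_\pm \) by boundedness. You instead stay infinitesimal: the Jacobi identity gives \( [\Reebfield, [u_+, u_-]] = 0 \), hence the linear system \( \Reebfield(f) = -\mu g \), \( \Reebfield(g) = -\mu f \), and you pass to the sign-invariant \( \psi = f^2 + g^2 \) satisfying \( \Reebfield^2 \psi = 4\mu^2 \psi \), whose bounded solutions along complete orbits must vanish. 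Both arguments handle the \( \mathbb{Z}_2 \) ambiguity of the eigenframe in the same way, by passing to a sign-invariant quantity (\( |a_\pm| \) versus \( f^2 + g^2 \)); your route avoids reasoning about pushforwards along the flow at the cost of a second-order ODE, while the paper's is slightly shorter because the exponential scaling of the coefficients is read off directly. One small remark: in the Jacobi step you need \( u_\pm(\mu) = 0 \), i.e., full constancy of \( \mu \) rather than merely \( \Reebfield(\mu) = 0 \); this is available from \cref{thm:Lagrangian-density-is-constant-for-critical-metrics}, which is established before this lemma and independently of it, so there is no circularity.
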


\begin{proof}
	That \( [\Reebfield, v_{\pm}] = \pm \mu v_{\pm} \) follows from \cref{thm:upm-Lie-brackets}.
	Since the \( u_{\pm} \) are orthogonal to \( \Reebfield \) which is the metric dual of \( \alpha \) (by \cref{def:compatible-metric}), the \( u_{\pm} \) lie in the integrable foliation \( \ker\alpha \).
	In particular, we can write \( [v_{+}, v_{-}] = a_{+} v_{+} + a_{-} v_{-} \) for smooth (locally defined) functions \( a_{\pm} \).
	Letting \( \Phi^{t} \) denote the flow of the Reeb vector field, the identity \( [\Reebfield, v_{\pm}] = \pm \mu v_{\pm} \) implies that \( \Phi^{t}_{*} v_{\pm} = e^{\mp \mu t} v_{\pm} \).
	Using this, we compute
	\begin{equation*}
		a_{+}v_{+}
		+
		a_{-}v_{-}
		=
		[e^{-\mu t} v_{+}, e^{\mu t} v_{-}]
		=
		\Phi^{t}_{*}[v_{+}, v_{-}]
		=
		(a_{+} \circ \Phi^{-t}) e^{-\mu t} v_{+}
		+
		(a_{-} \circ \Phi^{-t}) e^{\mu t} v_{-}
		.
	\end{equation*}
	Since only the eigendirections of \( h \), not its eigenvectors, are uniquely defined at every point, the \( v_{\pm} \) are not uniquely defined at every point.
	The above calculation is thus only valid locally.

	However, the only freedom for the \( v_{\pm} \) is changing \( (v_{+}, v_{-}) \) out for \( (-v_{+}, -v_{-}) \) (since \( v_{-} \) is determined by \( v_{+} \) as \( v_{-} = -\phi v_{+} \)).
	This changes the functions \( a_{\pm} \) to \( -a_{\pm} \), in particular, the functions \( \abs{a_{\pm}} \) \emph{are} globally defined.
	Covering paths \( \Phi^{t}(p) \) by neighbourhoods with locally well-defined \( v_{\pm} \) vector fields, we have for all \( t \in \mathbb{R} \)  that
	\begin{equation*}
		\abs{a_{\pm} \circ \Phi^{t}}
		=
		\abs{a_{\pm}} e^{\mp \mu t}
		,
	\end{equation*}
	which, since \( \mu = 2^{-3/2}\norm{\Liederivative{\Reebfield}g} \) is constant and non-vanishing and the \( \abs{a_{\pm}} \) are continuous functions on a compact manifold, is impossible unless \( a_{\pm} = 0 \) everywhere.
\end{proof}

By rescaling \( R \) to \( Y = \mu^{-1}R \) we get the standard basis for the Lie algebra \( \modelLieAlgebra \) (see \cref{sec:algebraic-anosov}).
The existence of such a local frame is in fact equivalent to the metric being critical.

\begin{lemma}
	Let \( (M, \alpha, \beta) \) be a compact cosymplectic manifold equipped with a compatible metric \( g \), associated \( (1, 1) \)-tensor field \( \phi \), and Reeb vector field \( \Reebfield \).
	Then \( g \) is a critical metric with positive torsion if and only if there exists, in a neighbourhood of any point,  a local orthonormal frame \( (\Reebfield, v_{+}, v_{-} = -\phi v_{+}) \) such that \( [v_{+}, v_{-}] = 0 \) and \( [\Reebfield, v_{\pm}] = \pm \mu v_{\pm} \) for some constant \( \mu > 0 \).
	In the affirmative case, \( \norm{\Liederivative{\Reebfield}g}^2 = 8\mu^2 \).
	\label{thm:critical-metric-criterion-by-local-frame}
\end{lemma}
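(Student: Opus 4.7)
The plan is to establish the two directions separately, using the criterion \( \nabla_{\Reebfield} h = 0 \) for criticality from \cref{thm:critical-g-equivalent-to-nablaRh=0} and the identity \( \nabla_X \Reebfield = h\phi X \) from \cref{thm:LRphi-properties}.

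For the forward implication, suppose \( g \) is critical with positive torsion. By \cref{thm:Lagrangian-density-is-constant-for-critical-metrics} the torsion is constant, so \( \mu := 2^{-3/2}\norm{\Liederivative{\Reebfield}g} \) is a positive constant. \cref{thm:LRphi-properties} produces, around each point, a local orthonormal eigenframe \( (\Reebfield, u_+, u_- = \phi u_+) \) of \( h \) with eigenvalues \( 0, \mu, -\mu \); \cref{thm:vpm-Lie-brackets} then supplies the bracket relations for \( u_+ \pm u_- \), which are vectors of norm \( \sqrt{2} \). I would rescale to \( v_\pm := (u_+ \pm u_-)/\sqrt{2} \): these are orthonormal, satisfy \( v_- = -\phi v_+ \) thanks to \( \phi u_+ = u_- \) and \( \phi u_- = -u_+ \), and the bracket relations \( [\Reebfield, v_\pm] = \pm \mu v_\pm \) and \( [v_+, v_-] = 0 \) survive the constant rescaling.

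For the reverse implication, suppose the frame exists. I would first derive \( \phi v_- = v_+ \) from \( \phi^2 = -\identityOperator + \alpha \tensorproduct \Reebfield \) together with \( v_- = -\phi v_+ \) and \( \alpha(v_+) = 0 \). Using \( h = \tfrac{1}{2}\Liederivative{\Reebfield}\phi \), a direct calculation
\begin{equation*}
	2 h v_+
	=
	(\Liederivative{\Reebfield}\phi)(v_+)
	=
	[\Reebfield, \phi v_+] - \phi[\Reebfield, v_+]
	=
	-[\Reebfield, v_-] - \mu\phi v_+
	=
	2\mu v_-
\end{equation*}
yields \( h v_+ = \mu v_- \), and analogously \( h v_- = \mu v_+ \). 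Hence \( h\phi v_+ = -\mu v_+ \) and \( h\phi v_- = \mu v_- \), so in the orthonormal frame \( (\Reebfield, v_+, v_-) \) the tensor \( \Liederivative{\Reebfield}g = 2g(\emptyslot, h\phi\emptyslot) \) is diagonal with entries \( (0, -2\mu, 2\mu) \), giving \( \norm{\Liederivative{\Reebfield}g}^2 = 8\mu^2 \). To conclude criticality, I would use the torsion-freeness of \( \nabla \) to compute
\begin{equation*}
	\nabla_\Reebfield v_\pm
	=
	[\Reebfield, v_\pm] + \nabla_{v_\pm}\Reebfield
	=
	\pm\mu v_\pm + h\phi v_\pm
	=
	0,
\end{equation*}
which together with \( \nabla_\Reebfield \Reebfield = 0 \) and \( \Reebfield(\mu) = 0 \) gives \( \nabla_\Reebfield h = 0 \); \cref{thm:critical-g-equivalent-to-nablaRh=0} then implies that \( g \) is critical.

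I do not expect a serious obstacle. The main subtlety is the bookkeeping between the eigenframe \( u_\pm \) of \( h \) used in the preceding lemmas and the frame \( v_\pm \) of the statement, which diagonalizes \( h\phi \) rather than \( h \): keeping the signs consistent in \( \phi v_+ = -v_- \), \( \phi v_- = v_+ \), \( h v_+ = \mu v_- \), and \( h\phi v_\pm = \mp \mu v_\pm \) is what makes the computation go through cleanly.
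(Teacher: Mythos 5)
Your proof is correct and follows essentially the same route as the paper: the forward direction is read off from \cref{thm:vpm-Lie-brackets} (with the paper leaving the $1/\sqrt{2}$ normalization of $u_+\pm u_-$ implicit, which you rightly make explicit since the bracket relations are homogeneous), and the reverse direction computes $hv_\pm=\mu v_\mp$ and $\nabla_\Reebfield v_\pm=0$ to conclude $\nabla_\Reebfield h=0$ and invoke \cref{thm:critical-g-equivalent-to-nablaRh=0}. Your sign bookkeeping ($\phi v_\pm=\mp v_\mp$, $h\phi v_\pm=\mp\mu v_\pm$) and the resulting torsion computation $\norm{\Liederivative{\Reebfield}g}^2=8\mu^2$ all check out.
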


\begin{proof}
	\Cref{thm:vpm-Lie-brackets} provides one direction.
	For the other, we will show that the assumptions imply that \( \nabla_{\Reebfield}h \) vanishes, which by \cref{thm:critical-g-equivalent-to-nablaRh=0} is equivalent to the metric being critical.
	Since \( (\nabla_{\Reebfield}h)(\Reebfield) = 0 \) we just need to calculate
	\begin{equation*}
		(\nabla_{\Reebfield}h)(v_{\pm})
		=
		\nabla_{\Reebfield}(h v_{\pm})
		-
		h \nabla_{\Reebfield} v_{\pm}
		.
	\end{equation*}
	Using just the Lie brackets \( [\Reebfield, v_{\pm}] \) and that \( \phi v_{\pm} = \mp v_{\mp} \) we have that
	\begin{equation}
		2hv_{\pm}
		=
		(\Liederivative{\Reebfield}\phi)(v_{\pm})
		=
		[\Reebfield, \phi v_{\pm}]
		-
		\phi[\Reebfield, v_{\pm}]
		=
		\mp [\Reebfield, v_{\mp}]
		-
		\phi(\pm \mu v_{\pm})
		=
		2\mu v_{\mp}
		.
		\label{eq:hvpm=muvpm-for-local-frame}
	\end{equation}
	Additionally, by \cref{thm:LRphi-properties}, we have that
	\begin{equation*}
		\nabla_{\Reebfield} v_{\pm}
		=
		[\Reebfield, v_{\pm}]
		+
		\nabla_{v_{\pm}} \Reebfield
		=
		\pm \mu v_{\pm}
		+
		h\phi v_{\pm}
		=
		\pm \mu v_{\pm}
		+
		(\mp \mu v_{\pm}e)
		=
		0
        .
	\end{equation*}
	This shows that $(\nabla_{\Reebfield}h)(v_{\pm}) = \mu\nabla_{\Reebfield} v_{\mp} - h(0) = 0$,
	and therefore \( \nabla_{\Reebfield} h = 0 \).
	\Cref{eq:hvpm=muvpm-for-local-frame} implies that \( v_{+} + v_{-}\) is an eigenvector of \( h \) with eigenvalue \( \mu \), so \cref{thm:LRphi-properties} let us conclude that \( \norm{\Liederivative{\Reebfield}g}^2 = 8\mu^{2} \).
\end{proof}

We can extract another observation from this \namecref{thm:critical-metric-criterion-by-local-frame}.

\begin{corollary}
	The compatible metrics constructed in \cref{ex:critical-metric-Sol-quotient} are critical.
	\label{thm:Sol-model-metrics-are-critical}
\end{corollary}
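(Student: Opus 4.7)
The strategy is to apply \cref{thm:critical-metric-criterion-by-local-frame} directly: it suffices to exhibit, in a neighbourhood of every point, a local orthonormal frame \( (\Reebfield, v_{+}, v_{-}) \) with \( v_{-} = -\phi v_{+} \), satisfying \( [v_{+}, v_{-}] = 0 \) and \( [\Reebfield, v_{\pm}] = \pm \tilde\mu v_{\pm} \) for some positive constant \( \tilde\mu \). Since the cosymplectic structure, the compatible metric, and the associated tensor \( \phi \) from \cref{ex:compatible-metric-Sol} are all left-invariant on \( \modelLieGroup \), it is natural to look for such a frame among left-invariant vector fields.

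Concretely, I would consider the left-invariant frame
\[
	\Reebfield = \mu^{-1} \partial_{t},
	\quad
	v_{+} = e^{t} \partial_{x_{+}},
	\quad
	v_{-} = e^{-t} \partial_{x_{-}}.
\]
Plugging into \( g = \mu^{2} \dif t^{2} + e^{-2t} \dif x_{+}^{2} + e^{2t} \dif x_{-}^{2} \) shows orthonormality. From the explicit form of \( \phi \) given in \cref{ex:compatible-metric-Sol}, a direct computation gives \( \phi v_{+} = -e^{-t} \partial_{x_{-}} = -v_{-} \), so the orientation condition \( v_{-} = -\phi v_{+} \) holds. The Lie brackets are immediate from the \( \modelLieAlgebra \) relations recalled in \cref{ex:compatible-metric-Sol}: \( [v_{+}, v_{-}] = 0 \) and \( [\Reebfield, v_{\pm}] = \pm \mu^{-1} v_{\pm} \), so \cref{thm:critical-metric-criterion-by-local-frame} applies with \( \tilde\mu = \mu^{-1} > 0 \).

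To conclude, I would note that since all the above objects are left-invariant on \( \modelLieGroup \), and since the quotient \( M = \Gamma \backslash \modelLieGroup \) in \cref{ex:critical-metric-Sol-quotient} is taken by elements of \( \Aut(\modelLieGroup) \) acting on \( \modelLieGroup \) by covering maps, the vector fields \( \Reebfield, v_{+}, v_{-} \) descend (at least locally, and up to a global sign ambiguity which is irrelevant for the statement of \cref{thm:critical-metric-criterion-by-local-frame}) to a local orthonormal frame on \( M \) with exactly the same Lie bracket relations. Hence the criterion from \cref{thm:critical-metric-criterion-by-local-frame} is satisfied on \( M \) and the compatible metric is critical, with torsion \( \norm{\Liederivative{\Reebfield}g}^{2} = 8 \mu^{-2} \).

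I do not anticipate any genuine obstacle here: the whole argument is a bookkeeping verification that the left-invariant frame on \( \modelLieGroup \) is precisely the one whose existence characterises critical metrics in \cref{thm:critical-metric-criterion-by-local-frame}. The only mild subtlety is keeping straight the two different roles of \( \mu \) (the cosymplectic parameter fixed in \cref{ex:compatible-metric-Sol} versus the constant appearing in the bracket relations of \cref{thm:critical-metric-criterion-by-local-frame}), which are reciprocal to each other after the normalisation \( \Reebfield = \mu^{-1} \partial_{t} \).
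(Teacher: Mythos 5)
Your proposal is correct and follows exactly the route the paper intends: the corollary is stated as an immediate consequence of \cref{thm:critical-metric-criterion-by-local-frame}, and your computation with the left-invariant frame \( (\mu^{-1}\partial_t, e^{t}\partial_{x_+}, e^{-t}\partial_{x_-}) \) is precisely the verification the paper leaves implicit, including the correct observation that the bracket constant is \( \mu^{-1} \) (so the torsion is \( 8\mu^{-2} \)) and that the frame descends locally to \( \Gamma\backslash\modelLieGroup \) up to an irrelevant sign. The only detail left unsaid is the case \( \mu<0 \) allowed in \cref{ex:compatible-metric-Sol}, which is handled by swapping the roles of \( v_{+} \) and \( v_{-} \) after an orientation adjustment.
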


The final piece in classifying the manifolds admitting critical metrics with positive torsion is to show that they are all the result of this construction.
The strategy is, given a manifold \( (M, \alpha, \beta, g) \) with a critical metric, to use the local \( \modelLieAlgebra \) structure given by \cref{thm:vpm-Lie-brackets} to reconstruct \( \modelLieGroup \) on the universal cover of \( M \).

\begin{proposition}
	Let \( (M, \alpha, \beta) \) be a compact, connected cosymplectic manifold equipped with a critical compatible metric \( g \) with positive torsion \( \norm{\Liederivative{\Reebfield}g}^2 > 0\).
	Then \( \modelLieGroup \) with its cosymplectic and Riemannian structure from \cref{ex:compatible-metric-Sol} with \( \mu = 2^{-3/2} \norm{\Liederivative{\Reebfield}g} \) covers (locally cosymplectically and isometrically) \( M \).
	\label{thm:positive-torsion-implies-homogeneous-space}
\end{proposition}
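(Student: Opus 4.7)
The plan is to promote the local structure from \cref{thm:critical-metric-criterion-by-local-frame} to a global Cartan-type developing map $\pi\colon\modelLieGroup\to M$, and then show that this map is a Riemannian cosymplectic covering. By \cref{thm:critical-metric-criterion-by-local-frame}, around any point of $M$ there is an orthonormal frame $(\Reebfield,v_+,v_-)$ satisfying $[\Reebfield,v_\pm]=\pm\mu v_\pm$ and $[v_+,v_-]=0$ with $\mu=2^{-3/2}\norm{\Liederivative{\Reebfield}g}$. Setting $Y=\mu^{-1}\Reebfield$ and $X_\pm=v_\pm$ realizes, at every point of $M$, the standard basis of $\modelLieAlgebra$ from \cref{rem:Sol-definition}. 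With the parameter in \cref{ex:compatible-metric-Sol} chosen so that the left-invariant orthonormal frame of $\modelLieGroup$ reproduces the same bracket constants and the same inner products, the two $\{e\}$-structures become pointwise isomorphic.

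\textbf{Construction of $\pi$.} Fix a basepoint $p_0\in M$ with a local frame and identify $T_{p_0}M\cong T_e\modelLieGroup=\modelLieAlgebra$ via the frame isomorphism. For each smooth path $\gamma\colon[0,1]\to\modelLieGroup$ with $\gamma(0)=e$, define $\tilde\gamma\colon[0,1]\to M$ as the solution of the non-autonomous ODE
\begin{equation*}
	\tilde\gamma'(t)=\Xi\bigl(\gamma(t)^{-1}\gamma'(t)\bigr)\Big|_{\tilde\gamma(t)},\qquad\tilde\gamma(0)=p_0,
\end{equation*}
where $\Xi\colon\modelLieAlgebra\to\mathfrak{X}_{\mathrm{loc}}(M)$ sends $(Y,X_\pm)$ to $(\mu^{-1}\Reebfield,v_\pm)$ in the local frame. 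Compactness of $M$ makes all vector fields in the image of $\Xi$ complete, so $\tilde\gamma$ exists on all of $[0,1]$, and we set $\pi(\gamma(1)):=\tilde\gamma(1)$. Once well-defined, $\pi$ is by construction a local diffeomorphism that intertwines the left-invariant frame of $\modelLieGroup$ with the local frame on $M$, hence a local isometry that also intertwines the $(1,1)$-tensors $\phi$ (which is determined by the frame via $\phi v_\pm=\mp v_\mp$ and $\phi\Reebfield=0$). Consequently, $\pi$ pulls back $(\alpha,\beta)$ to the cosymplectic structure of \cref{ex:compatible-metric-Sol}. Since $\modelLieGroup$ with its left-invariant metric is complete and $M$ is connected, the classical fact that a local isometry from a complete Riemannian manifold into a connected one is a covering closes the argument.

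\textbf{Main obstacle.} The analytical heart of the proof is establishing that $\pi$ is well-defined, i.e., that $\tilde\gamma(1)$ depends only on $\gamma(1)$ and not on the chosen path. Since $\modelLieGroup$ is simply connected, this reduces to invariance of $\tilde\gamma(1)$ under smooth homotopies of $\gamma$, which in turn reduces to a Maurer--Cartan integrability identity for the image of $\Xi$. This identity is an algebraic consequence of the structure constants of $\modelLieAlgebra$ and holds on $M$ precisely because the Lie brackets of $(\mu^{-1}\Reebfield,v_\pm)$ match those of $(Y,X_\pm)$ by construction. A secondary subtlety is the $\mathbb{Z}/2$ sign ambiguity in the global choice of $(v_+,v_-)$: this can be resolved by first constructing $\pi$ onto the (at most) double cover of $M$ on which the eigenvector fields of $h$ are globally defined, and then descending; alternatively, one can note that the ODE is intrinsic to the eigenline bundles $E_\pm$ of $h$ together with the $(1,1)$-tensor $\phi$, and is therefore insensitive to the sign choice.
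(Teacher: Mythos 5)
Your proposal is essentially the same argument as the paper's, packaged differently: both integrate the local $\modelLieAlgebra$-frame $(\mu^{-1}\Reebfield, v_+, v_-)$ supplied by \cref{thm:critical-metric-criterion-by-local-frame} into a global $\modelLieGroup$-structure. The paper first lifts everything to the universal cover $\tilde M$ (where the eigenline bundles of $h$ are orientable, so the frame is globally defined), integrates the resulting complete $\modelLieAlgebra$-action to a right $\modelLieGroup$-action on $\tilde M$, shows transitivity by an open--closed orbit argument, and concludes $\tilde M = \modelLieGroup$ with the left-invariant structures of \cref{ex:compatible-metric-Sol}. You instead build a developing map $\pi\colon\modelLieGroup\to M$ by path development and close with the fact that a local isometry from a complete manifold is a covering. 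Both are valid implementations of the same idea; the paper's version avoids the monodromy discussion by working where the frame is globally defined from the start, while yours trades that for the classical completeness-implies-covering theorem.

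One concrete error to fix: your second proposed resolution of the sign ambiguity is wrong. The development ODE is \emph{not} insensitive to replacing $(v_+,v_-)$ by $(-v_+,-v_-)$ --- if $\gamma(t)^{-1}\gamma'(t)=X_+$, the two choices produce $\tilde\gamma'=v_+$ and $\tilde\gamma'=-v_+$, which are genuinely different integral curves, and the data $(E_\pm,\phi)$ does not single out a sign. Only your first fix is viable: pass to the double cover (or directly to the universal cover, as the paper does) on which $E_\pm$ are orientable, construct $\pi$ there, and then compose with the covering down to $M$; note that after this composition there is nothing left to ``descend,'' since the target of the statement is a covering of $M$, not a map defined on $M$. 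With that correction, and with the homotopy-invariance of the development endpoint justified by the Frobenius/Palais argument you allude to (involutivity of the graph distribution on $\modelLieGroup\times M$ spanned by pairs of matching left-invariant and frame vector fields), your proof is complete.
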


\begin{proof}
	Since the torsion is positive (and constant), \( h \) is diagonalizable at every point with distinct eigenvalues (\cref{thm:LRphi-properties}), so the eigendirections of \( h \) define line bundles on \( M \) spanning the tangent space at every point.
	Now lift the cosymplectic and metric structure to the universal cover \( \tilde{M} \) of \( M \).
	We will denote the lifted tensor and vector fields with the same symbols as the corresponding objects on \( M \).
	On \( \tilde{M} \), the (lift of the) eigendirections are oriented line bundles and thus spanned by a global frame of vector fields \( (\Reebfield, v_{+}, v_{-} = -\phi v_{+}) \) satisfying the Lie bracket relations \( [\mu^{-1}\Reebfield, v_{\pm}] = \pm v_{\pm} \) and \( [v_{+}, v_{-}] = 0 \), cf. \cref{L:brackets}.
	This defines a Lie algebra action of \( \modelLieAlgebra \) (see \cref{sec:algebraic-anosov}) on \( \tilde{M} \).
	The vector fields \( \Reebfield \) and \( v_{\pm} \) are complete, so the action integrates to a right action of \( \modelLieGroup \) on \( \tilde{M} \).

	This action is transitive.
	Notice first that for any fixed \( p \in \tilde{M} \), the map \( \modelLieGroup \to \tilde{M} \), \( g \mapsto g.p \) is a local diffeomorphism since its derivative \( \modelLieAlgebra \to T_p \tilde{M} \) is an isomorphism.
	This implies that all orbits of the action \( \modelLieGroup \actsOn \tilde{M} \) are open and thus also closed, since the complement of an orbit is the union of all other orbits.
	Since \( \tilde{M} \) is connected, the action only has one orbit.

	This implies that \( \tilde{M} \) is a homogeneous space of \( \modelLieGroup \), and since \( \tilde{M} \) is simply connected we in fact have \( \tilde{M} = \modelLieGroup \).
	Under this correspondence, the (right) action of \( \modelLieGroup \) on \( \tilde{M} \) is by right-multiplication.
	Recall the basis \( (Y, X_{\pm}) \) of \( \modelLieAlgebra \) (\cref{sec:algebraic-anosov}).
	That the action \( \modelLieGroup \actsOn \tilde{M} = \modelLieGroup \) integrates the Lie algebra action \( \modelLieAlgebra \actsOn \tilde{M} \) means that \( Y^{\sharp} = \mu^{-1} \Reebfield \) and \( X_{\pm}^{\sharp} = v_{\pm} \).
	Here \( Y^{\sharp} \) denotes the fundamental vector field of \( Y \) with regards to the action (and similarly for \( X_{\pm}^{\sharp} \)), defined at \( g \in \modelLieGroup = \tilde{M} \) as
	\begin{equation*}
		Y^{\sharp}_{g}
		=
		\odfrac{}{t}\Big\vert_{t=0}
		g e^{tY}
		.
	\end{equation*}
	An elementary fact is that fundamental vector fields of actions by right-multiplication are invariant under left-multiplication.
	In particular, \( \Reebfield \) and \( v_{\pm} \) are left-invariant vector fields on \( \modelLieGroup = \tilde{M} \).
	This means that the lift of the cosymplectic and metric structures on \( M \) to \( \tilde{M} \) are exactly the ones considered in \cref{ex:compatible-metric-Sol}, or, in other words, \( M \) is covered by this model.
\end{proof}

Recall the algebraic Anosov flows from \cref{sec:algebraic-anosov} (and that the Reeb flow of a cosymplectic manifold with a critical metric is Anosov, \cref{thm:stable-unstable-directions-as-kernels-of-global-tensors}).
The following is then immediate from the above result with the reparametrization being by scaling by the parameter \( \mu \).

\begin{corollary}
	The Reeb flow of a compact, connected cosymplectic manifold with a positive torsion critical metric is, up to reparametrization, an algebraic Anosov flow of \( \modelLieGroup \) type.
	\label{thm:critical-metric-Reeb-is-algebraic-Anosov}
\end{corollary}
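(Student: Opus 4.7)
The plan is to read this off directly from \cref{thm:positive-torsion-implies-homogeneous-space} combined with the Anosov property established in \cref{thm:stable-unstable-directions-as-kernels-of-global-tensors}, being careful to track the time-rescaling that appears. There is essentially no new geometric content to produce; the work was already done in the proposition.

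First I would invoke \cref{thm:positive-torsion-implies-homogeneous-space} to identify the universal cover \( \tilde{M} \) with \( \modelLieGroup \) via a map that is locally cosymplectic and isometric and that sends the lifted Reeb vector field \( \tilde{\Reebfield} \) to the left-invariant vector field \( \mu Y^{\sharp} \), where \( Y^{\sharp} \) is the fundamental vector field of \( Y \in \modelLieAlgebra \) with respect to the right-multiplication action of \( \modelLieGroup \) on itself (this identification was established inside the proof of the proposition, where \( Y^{\sharp} = \mu^{-1} \Reebfield \)). The flow of \( Y^{\sharp} \) on \( \modelLieGroup \) is precisely the one-parameter subgroup flow \( \Phi^{t}(g) = g\exp(tY) \) that defines algebraic Anosov flows of \( \modelLieGroup \) type in \cref{def:algebraic-anosov}. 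Consequently, the lifted Reeb flow is \( \Phi^{\mu t} \), i.e., the algebraic flow \( \Phi^{t} \) reparametrized by the positive constant \( \mu = 2^{-3/2}\norm{\Liederivative{\Reebfield}g} \).

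Next I would use that by \cref{thm:stable-unstable-directions-as-kernels-of-global-tensors} the Reeb flow on the compact manifold \( M \) is Anosov with smooth hyperbolic splitting; reparametrization by a positive constant preserves the Anosov property, so the covering flow \( \Phi^{t} \) on \( \modelLieGroup \) descends to an Anosov flow on \( M \). This is exactly the situation described in \cref{def:algebraic-anosov}: \( M \) is covered by \( \modelLieGroup \) in such a way that the Reeb flow is covered, up to the scalar reparametrization by \( \mu \), by the standard algebraic flow \( \Phi^{t} \). Hence the Reeb flow on \( M \) is, up to reparametrization, an algebraic Anosov flow of \( \modelLieGroup \) type, which is the claim.

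The only subtle bookkeeping point — and the closest thing to an obstacle — is matching normalizations between the Reeb field of the model in \cref{ex:compatible-metric-Sol}, whose speed is governed by the parameter \( \mu \) chosen for the cosymplectic structure, and the fixed normalization of \( Y \) in \cref{sec:algebraic-anosov} for which \( [Y, X_{\pm}] = \pm X_{\pm} \). Reconciling the two forces the reparametrization factor to be precisely \( \mu \), which is then constant by \cref{thm:Lagrangian-density-is-constant-for-critical-metrics}, so the reparametrization is smooth and global as required.
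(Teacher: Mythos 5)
Your argument is correct and matches the paper's own treatment: the corollary is stated there as immediate from \cref{thm:positive-torsion-implies-homogeneous-space} together with \cref{thm:stable-unstable-directions-as-kernels-of-global-tensors}, with the reparametrization being exactly the constant scaling by \( \mu \). Your careful tracking of the normalization of \( Y \) versus the Reeb field is the same bookkeeping the paper performs inside the proof of the proposition, so nothing is missing.
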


\subsection{Equivalence of models for critical metrics}
\label{sec:Sol-and-mapping-torus-models-equivalent}
The combination of \cref{thm:Sol-model-metrics-are-critical} and \cref{thm:positive-torsion-implies-homogeneous-space} completely describes the critical metrics with positive torsion as being the ones constructed in \cref{ex:critical-metric-Sol-quotient}.
In \cref{thm:classification} this classification was instead phrased in terms of mapping tori.
In this \namecref{sec:Sol-and-mapping-torus-models-equivalent} we will prove the equivalence of these two models of critical metrics, \cref{ex:critical-metric-mapping-torus} and \cref{ex:critical-metric-Sol-quotient}.

We prove that the two models are cosymplectomorphic and isometric.
Since we know that the metrics on quotients of \( \modelLieGroup \) are critical by \cref{thm:Sol-model-metrics-are-critical} this also shows that the metrics on mapping tori from \cref{ex:critical-metric-mapping-torus} are critical.

\begin{lemma}
	Let \( (M, \alpha, \beta, g) \) be a compact cosymplectic manifold with a compatible metric and assume that \( M \) is covered by \( \modelLieGroup \) equipped with the structures from \cref{ex:critical-metric-Sol-quotient} (the covering map being locally cosymplectic and isometric) for some \( \mu \neq 0 \).
	Then \( M \) is cosymplectomorphic and isometric to a mapping torus of \( \mathbb{T}^{2} \) by a hyperbolic toral automorphism with the structures defined in \cref{ex:critical-metric-mapping-torus} for some \( \dtFactor \) and \( V \).

	Conversely, any such cosymplectic mapping torus is covered by \( \modelLieGroup \) with the cosymplectic and metric structure of \cref{ex:critical-metric-Sol-quotient}.
	\label{thm:Sol-covered=mapping-torus}
\end{lemma}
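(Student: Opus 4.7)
The plan is to exploit the semidirect product structure $\modelLieGroup = \mathbb{R} \ltimes \mathbb{R}^2$ together with its natural Lie group projection $p \colon \modelLieGroup \to \mathbb{R}$, $(t, a, b) \mapsto t$, whose kernel is the abelian normal subgroup $\mathbb{R}^2$. Both directions of the lemma reduce to analyzing the discrete covering group $\Gamma$ acting by left translations (which preserve both structures by left-invariance).

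For the forward direction, I would first analyze the projected image $p(\Gamma) \subset \mathbb{R}$. As a discrete subgroup, it is of the form $t_0 \mathbb{Z}$ with some $t_0 > 0$; it cannot be trivial, since otherwise the non-compact $t$-direction would survive the quotient, contradicting the compactness of $M$. Setting $\Lambda := \Gamma \cap \ker p$, this yields a short exact sequence
\[
	0 \to \Lambda \to \Gamma \to t_0 \mathbb{Z} \to 0,
\]
and compactness of the fiber forces $\Lambda$ to be a full-rank lattice in $\mathbb{R}^2$. Picking $\gamma_0 = (t_0, a_0, b_0) \in \Gamma$ representing the generator of $t_0 \mathbb{Z}$, a direct computation in the semidirect product gives
\[
	\gamma_0 (0, a, b) \gamma_0^{-1} = (0, e^{t_0} a, e^{-t_0} b),
\]
so the diagonal matrix $\text{diag}(e^{t_0}, e^{-t_0})$ preserves $\Lambda$. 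Expressing it in a $\mathbb{Z}$-basis of $\Lambda$ yields an element $L \in \Liegroup{SL}(2, \mathbb{Z})$ with eigenvalues $e^{\pm t_0}$, i.e., a hyperbolic toral automorphism. The descent of $p$ to $M \to \mathbb{R}/t_0 \mathbb{Z}$ then exhibits $M$ as the mapping torus $\mathbb{T}^2_L$.

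To identify the geometric structures, I would take the basis of $\Lambda$ as coordinates $(x, y)$ on $\mathbb{T}^2$ and rescale $t$ by $1/t_0$, obtaining $\dtFactor = \mu t_0 = \mu \ln|\lambda|$ and a constant $V > 0$ determined by the change of basis from the $L$-eigenbasis $(w_+, w_-)$ to the $\Lambda$-basis (so that $V\, dx \wedge dy(w_+, w_-) = 1$). Left-invariance of the Sol metric ensures it descends to the quotient; rewriting it in the eigenbasis of $L$ and in the rescaled coordinate yields exactly the metric of Example~\ref{ex:critical-metric-mapping-torus}. The converse is essentially symmetric: starting from $\mathbb{T}^2_L$, I would lift to the universal cover $\mathbb{R}^2 \times \mathbb{R}$, rescale $t$ by $\ln|\lambda|$ and pass to the $L$-eigenbasis to identify the lifted structures with those of Example~\ref{ex:compatible-metric-Sol} for $\mu = \dtFactor/\ln|\lambda|$, whereupon the deck transformations become left multiplications by a discrete cocompact subgroup $\Gamma \subset \modelLieGroup$.

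The main obstacle is the parameter bookkeeping in the identification step: one has to ensure that the rescaling of $t$, the choice of eigenbasis of $L$, and the choice of basis of $\Lambda$ combine into a genuine global isometric cosymplectomorphism rather than merely a diffeomorphism. The key technical point that makes this work cleanly is the left-invariance of the Sol cosymplectic and Riemannian structures, which guarantees that the identification need only be verified on a single fiber and then automatically extends consistently across the whole manifold via the hyperbolic monodromy.
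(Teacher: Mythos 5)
There is a genuine gap in the forward direction: you assume without justification that the deck transformation group \( \Gamma \) acts by left translations, i.e., that \( \Gamma \subseteq \modelLieGroup \). The hypothesis only gives that the covering is locally cosymplectic and isometric, so \( \Gamma \) is a discrete subgroup of the full group \( \Aut(\modelLieGroup) \) of isometric cosymplectomorphisms, and this group is strictly larger than \( \modelLieGroup \): by Scott's determination of the isometry group of \( \modelLieGroup \), it is the semidirect product \( \mathbb{Z}_{2} \ltimes \modelLieGroup \), where the extra generator is the flip \( r \colon (t, x_{+}, x_{-}) \mapsto (t, -x_{+}, -x_{-}) \) (the other seven candidate isometries fail to preserve the cosymplectic structure). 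Your analysis via the projection \( p \colon \modelLieGroup \to \mathbb{R} \) and the lattice \( \Lambda = \Gamma \cap \ker p \) is correct precisely when \( \Gamma \subseteq \modelLieGroup \), and in that case it recovers the paper's appeal to the Auslander--Green--Hahn classification of cocompact lattices in \( \modelLieGroup \). But when \( \Gamma \) surjects onto \( \mathbb{Z}_{2} \), the index-\( 2 \) subgroup \( \Gamma_{0} = \Gamma \cap \modelLieGroup \) is a lattice as above and the quotient \( \Gamma \backslash \modelLieGroup \) is a further \( \mathbb{Z}_2 \)-quotient of \( \Gamma_{0} \backslash \modelLieGroup \); a separate computation (carried out in the paper) shows this is still a mapping torus of \( \mathbb{T}^{2} \), but glued by a hyperbolic toral automorphism with \emph{negative} eigenvalues of the form \( -L^{-1/2} \). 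Your argument produces only monodromies with eigenvalues \( e^{\pm t_{0}} > 0 \) and so misses these manifolds entirely; this case is not vacuous, and it corresponds exactly to the stable/unstable line bundles being non-orientable.

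The same oversight appears in your converse direction, where you assert that after passing to the eigenbasis the deck transformations of \( \mathbb{T}^{2}_{L} \to M \)'s universal cover ``become left multiplications by a discrete cocompact subgroup \( \Gamma \subset \modelLieGroup \)'': when \( L \) has negative eigenvalues, the gluing map acts on the eigencoordinates by \( x_{\pm} \mapsto \lambda^{\pm 1} x_{\pm} \) with \( \lambda < 0 \), which is a left translation composed with the flip \( r \), not a left translation. The conclusion of the converse still holds because \cref{ex:critical-metric-Sol-quotient} permits \( \Gamma \subseteq \Aut(\modelLieGroup) \), but your identification of \( \Gamma \) is incorrect as stated. (Incidentally, the paper proves the converse by a different, shorter route: it verifies the Lie-bracket criterion of \cref{thm:critical-metric-criterion-by-local-frame} for the frame of \cref{ex:critical-metric-mapping-torus} and then invokes \cref{thm:positive-torsion-implies-homogeneous-space}; your direct lift-and-identify computation is a legitimate alternative once the deck group is described correctly.) To repair the proposal, you must first establish that \( \Aut(\modelLieGroup) \cong \mathbb{Z}_{2} \ltimes_{\psi} \modelLieGroup \) and then split the argument according to whether \( \Gamma \to \mathbb{Z}_{2} \) is trivial or surjective.
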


\begin{proof}
	Given a mapping torus, observe that the orthonormal vector fields \( (R, v_{\pm}) \) satisfy the Lie bracket criteria of \cref{thm:critical-metric-criterion-by-local-frame}.
	\Cref{thm:positive-torsion-implies-homogeneous-space} then implies that the mapping torus is covered by the structure on \( \modelLieGroup \) from \cref{ex:critical-metric-Sol-quotient}.

	For the other direction consider a covering \( \modelLieGroup \to M \) which is locally both cosymplectic and isometric.
	This covering is a quotient map \( \modelLieGroup \to \Gamma\backslash\modelLieGroup \) where the deck transformation group \( \Gamma \) is a subgroup of the group \( \Aut(\modelLieGroup) \) of isometric cosymplectomorphisms.
	In \cite{scottGeometries3Manifolds1983} it is shown that the isometry group of \( \modelLieGroup \) is generated by \( \modelLieGroup \) acting on itself by left-multiplication and the eight diffeomorphisms of the form \( (t, x_{+}, x_{-}) \mapsto (t, \pm x_{+}, \pm x_{-}) \) and \( (t, x_{+}, x_{-}) \mapsto (-t, \pm x_{-}, \pm x_{+}) \).
	Note that this is the isometry group of \( \modelLieGroup \) when equipped with the metric defined in \cref{ex:compatible-metric-Sol} with \( \mu = 1 \), but it is clear now that the isometry group in fact is independent of the choice of \( \mu \).

	The cosymplectic structure was constructed left-invariant, so it is preserved by the action of \( \modelLieGroup \).
	Among the extra diffeomorphisms, only \( r \colon (t, x_{+}, x_{-}) \mapsto (t, -x_{+}, -x_{-}) \) preserves the cosymplectic structure on \( \modelLieGroup \).
	The full group \( \Aut(\modelLieGroup) \) is thus generated by this flip and \( \modelLieGroup \) acting on itself by left-multiplication.
	We can think of \( \langle r \rangle \) as \( \mathbb{Z}_{2} = \{ -1, 1 \} \) with \( r \) corresponding to \( -1 \).
	Denote by \( \psi \) the action of \( \langle r \rangle \simeq \mathbb{Z}_{2} \actsOn \modelLieGroup \) given by \( \psi(r)(t, v) = (t, -v) \).
	Note that if \( \epsilon \in \langle r \rangle \) and \( g \in \modelLieGroup \), \( \epsilon \circ g = \psi(\epsilon)(g) \circ \epsilon \) as elements of \( \Aut(\modelLieGroup) \).
	We conclude that \( \Aut(\modelLieGroup) \) is isomorphic to the semidirect product \( \mathbb{Z}_{2} \ltimes_{\psi} \modelLieGroup \) where an element \( (\epsilon, g) \) acts on point \( h \in \modelLieGroup \) as \( (\epsilon, g) h = g( \psi(\epsilon)(h) ) \).
	
	Now let \( \Gamma \) be a subgroup of \( \Aut(\modelLieGroup) \) for which \( \Gamma\backslash\modelLieGroup \) is compact and the quotient map is a covering.
	Let \( \Gamma_0 \) denote the normal subgroup of \( \Gamma \) of elements \( (+1, g) \) obtained as the kernel of the homomorphism \( \Gamma \to \mathbb{Z}_{2} \) sending \( (\epsilon, g) \mapsto \epsilon \).

	There are two possibilities.
	If \( \Gamma = \Gamma_0 \), \( \Gamma \) is just a cocompact, discrete subgroup of \( \modelLieGroup \).
	These are classified in \cite{auslanderFlowsHomogeneousSpaces1963}: \( \Gamma_0 \) is generated by three elements of \( \modelLieGroup \), \( (k, 0) \) (let us take \( k > 0 \)), \( (0, u) \), and \( (0, v) \), satisfying that \( e^{k} + e^{-k} \) is an integer, that \( u \) and \( v \) are linearly independent vectors, and that the diagonal matrix with diagonal \( (e^{k}, e^{-k}) \) preserves the lattice \( \mathbb{Z}u + \mathbb{Z} v\).
	This last requirement is equivalent to the matrix
	\begin{equation*}
		L
		:=
		\begin{pmatrix}
			u & v
		\end{pmatrix}^{-1}
		\begin{pmatrix}
			e^{k} & 0\\
			0 & e^{-k}
		\end{pmatrix}
		\begin{pmatrix}
			u & v
		\end{pmatrix}
	\end{equation*}
	having integer entries.
	Changing to the \( (u, v) \) basis and rescaling \( \mathbb{R} \) by \( 1/k \), we can present \( \modelLieGroup \) as \( \mathbb{R} \ltimes \mathbb{R}^2 \) with the action of \( \mathbb{R} \) being \( t.v = L^t v \) and the subgroup \( \Gamma_0 \) being \( \mathbb{Z} \ltimes \mathbb{Z}^{2} \).
	Now it is direct to see that \( \Gamma_0\backslash\modelLieGroup \) is a mapping torus of \( \mathbb{T}^2 \) by the hyperbolic toral automorphism \( L^{-1} \).
	Note that under this coordinate change, the vectors \( \partial_{x_{\pm}} \) go to the eigenvectors of \( L \) with eigenvalues \( e^{\pm k} \).
	We see that this gives an isometric cosymplectomorphism to a model from \cref{ex:critical-metric-mapping-torus} (with \( \lambda = e^{k} \), \( V = \abs{u \times v} \) the area of fundamental domain of the torus measured using the \( 2 \)-form on \( \modelLieGroup \), and \( \tau = \mu/k \)).
	
	The other case is that \( \Gamma \) is not equal to \( \Gamma_0 \).
	Then the map \( \Gamma \to \mathbb{Z}_{2} \) from before is surjective, so \( \Gamma_{0} \) has index \( 2 \) in \( \Gamma \), which means that the map \( \Gamma_{0} \backslash \modelLieGroup \to \Gamma \backslash \modelLieGroup \) is a double cover.
	In particular, \( \Gamma_{0} \backslash \modelLieGroup \) is also compact, so \( \Gamma_{0} \) is of the form described above and we can change coordinates such that \( \Gamma_{0} \) becomes isomorphic to \( \{ 1 \} \ltimes \left(\mathbb{Z} \ltimes \mathbb{Z}^{2}\right) \).
	By multiplying with elements from \( \Gamma_{0} \) it can be seen that \( \Gamma \setminus \Gamma_{0} \) exactly consists of elements \( (-1, t_0 + k, L^{k} v_0 + u) \) where \( k \in \mathbb{Z} \), \( u \in \mathbb{Z}^2 \), and \( (-1, t_{0}, v_{0}) \in \Gamma \) is some fixed element.
	The square of this element is \( (1, 2t_{0}, (1-L^{t})v_{0}) \in \Gamma_{0} \), in particular \( 2t_{0} \in \mathbb{Z} \).

	Recall that the quotient map \( \modelLieGroup \to \Gamma \backslash \modelLieGroup \) is a covering map, in particular the action \( \Gamma \actsOn \modelLieGroup \) is free.
	Thus we cannot have \( t_{0} \in \mathbb{Z} \) since in that case there would be an element \( (-1, 0, v) \in \Gamma \) which fixes the points \( (t, (1/2) v_{0}) \in \modelLieGroup \).
	We conclude that we can take \( t_{0} = 1/2 \).

	From this explicit description of \( \Gamma \) we see as before that \( \Gamma \backslash \modelLieGroup \) is a mapping torus of \( \mathbb{T}^{2} = \mathbb{R}^{2}/\mathbb{Z}^{2} \), but this time glued at \( 1/2 \) by the diffeomorphism \( p \mapsto -L^{-1/2}p + L^{-1}v_{0} \).
	By doing another coordinate change \( (t, v) \mapsto (t/2, v + (L - 1)^{-1} L^{-1}v_{0}) \) on \( \modelLieGroup \), the mapping torus gets glued at \( 1 \) by the gluing map \( -L^{-1/2} \).
	This coordinate change preserves the (local) vector fields \( \partial_{x_{\pm}} \) on \( \modelLieGroup \) and thus, as for the case \( \Gamma = \Gamma_{0} \), the correspondence from the \( \modelLieGroup \) quotient to a mapping torus sends \( \partial_{x_{\pm}} \) to the eigenvectors of \( -L^{-1/2} \).
	We see then that, as in that situation, the cosymplectic and metric structures on \( \Gamma \backslash \modelLieGroup \) are of the type from \cref{ex:critical-metric-mapping-torus} on the mapping torus.
	Notice that this time \( \tau = \mu/2k \) due to the extra rescaling of the \( t \)-coordinate.
\end{proof}

As mentioned in the introduction of this \namecref{sec:Sol-and-mapping-torus-models-equivalent}, the manifolds admitting critical metrics with positive torsion are exactly the ones from \cref{ex:critical-metric-Sol-quotient}.
Combining this with the equivalence of the \( \modelLieGroup \) and mapping torus models we have just shown, we can summarize what we have proven so far as follows.

\begin{corollary}
	The constructions in \cref{ex:critical-metric-Sol-quotient}, and, equivalently, in \cref{ex:critical-metric-mapping-torus}, give all connected cosymplectic manifolds with critical compatible metrics with positive torsion.
	\label{thm:critical-metric-iff-Sol-quotient}
\end{corollary}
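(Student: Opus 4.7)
The statement is essentially a packaging of the preceding results, so the plan is short and assembles them in the right order.

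The proof has two directions. For the ``all such manifolds arise this way'' direction, the plan is to start from a compact connected cosymplectic manifold $(M,\alpha,\beta)$ with a critical compatible metric $g$ such that $\norm{\Liederivative{\Reebfield}g}^2>0$ and apply \cref{thm:positive-torsion-implies-homogeneous-space} directly: this produces a covering $\modelLieGroup \to M$ which is locally cosymplectic and isometric when $\modelLieGroup$ carries the structures of \cref{ex:compatible-metric-Sol} with $\mu = 2^{-3/2}\norm{\Liederivative{\Reebfield}g}$. The key observation is then that the deck transformation group $\Gamma$ of this covering must consist of diffeomorphisms preserving both the cosymplectic and Riemannian structures on $\modelLieGroup$, i.e., $\Gamma \subseteq \Aut(\modelLieGroup)$, and $M = \Gamma\backslash\modelLieGroup$ is compact since $M$ is. This is exactly the data in \cref{ex:critical-metric-Sol-quotient}. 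Applying \cref{thm:Sol-covered=mapping-torus} then rewrites the same manifold as a cosymplectic mapping torus of $\mathbb{T}^2$ by a hyperbolic toral automorphism with the metric from \cref{ex:critical-metric-mapping-torus}, for suitable parameters $\dtFactor$ and $V$.

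For the converse (``the constructions do yield critical metrics with positive torsion''), \cref{thm:Sol-model-metrics-are-critical} already establishes it for the $\modelLieGroup$-quotient model, with positivity of the torsion following from $\mu \neq 0$ and the identity $\norm{\Liederivative{\Reebfield}g}^2 = 8\mu^2$ established in \cref{thm:critical-metric-criterion-by-local-frame}. Criticality of the mapping torus construction is then immediate from the second half of \cref{thm:Sol-covered=mapping-torus}, which provides a cosymplectic isometry with a $\modelLieGroup$-quotient; alternatively, one can verify it directly by checking that the frame $(\Reebfield = \dtFactor^{-1}\partial_t,\, v_+, v_-)$ on the mapping torus satisfies the Lie bracket relations $[v_+,v_-]=0$ and $[\Reebfield,v_\pm] = \pm \dtFactor^{-1}\ln\abs{\lambda}\, v_\pm$ of \cref{thm:critical-metric-criterion-by-local-frame}.

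There is no genuine obstacle here: all of the analytic work (the Euler--Lagrange equation, the hyperbolicity argument for the Reeb flow, the integration of the local $\modelLieAlgebra$-action, and the Auslander-type classification of cocompact subgroups) has already been done in \cref{sec:critical-metrics-Anosov,sec:Sol-and-mapping-torus-models-equivalent}. The only subtlety worth emphasizing is that the equivalence in \cref{thm:Sol-covered=mapping-torus} runs in both directions, so the corollary is a true biconditional: combining \cref{thm:positive-torsion-implies-homogeneous-space} (necessity) with \cref{thm:Sol-model-metrics-are-critical} (sufficiency) and \cref{thm:Sol-covered=mapping-torus} (equivalence of the two presentations) yields the statement in a couple of lines.
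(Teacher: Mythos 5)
Your proposal is correct and follows exactly the paper's route: the paper also obtains this corollary by combining \cref{thm:positive-torsion-implies-homogeneous-space} (any critical metric with positive torsion is covered by the $\modelLieGroup$ model, with deck group in $\Aut(\modelLieGroup)$), \cref{thm:Sol-model-metrics-are-critical} (those model metrics are critical), and \cref{thm:Sol-covered=mapping-torus} (equivalence of the $\modelLieGroup$-quotient and mapping-torus presentations). The only cosmetic remark is that in your direct verification on the mapping torus the bracket signs come out as $[\Reebfield,v_{\pm}]=\mp\dtFactor^{-1}\ln\abs{\lambda}\,v_{\pm}$ for the labelling $v_{\pm}=\abs{\lambda}^{\mp t}w_{\pm}$, which is fixed by swapping the labels $v_{+}\leftrightarrow v_{-}$ and does not affect the argument.
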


Note that in the correspondence of \cref{thm:Sol-covered=mapping-torus}, the parameter \( \mu \) from the \( \modelLieGroup \) model does not exactly correspond to the parameter \( \dtFactor \) from the mapping torus model.
The difference is due to a rescaling of \( \mathbb{R} \) by either \( k^{-1} \) or \( (2k)^{-1} \).

The case \( \Gamma_{0} \neq \Gamma \) corresponds to the line bundles spanned by the local vector fields \( v_{\pm} \) (in the mapping torus viewpoint) and \( X_{\pm} \) (in the \( \modelLieGroup \) quotient viewpoint) not being orientable and the gluing diffeomorphism \( L \) of the mapping torus having negative eigenvalues.
Conversely, the manifold is a Lie group quotient of \( \modelLieGroup \) (i.e., \( \Gamma_{0} = \Gamma \)) if and only if the gluing map has positive eigenvalues or equivalently that the \( v_{\pm} \)-directions are orientable.

If the gluing map has negative eigenvalues (such that the \( v_{\pm} \)-directions are non-orientable), the mapping torus glued by \( L^{2} \) is a double cover of the manifold where these line bundles become orientable.
From the \( \modelLieGroup \) point of view, a manifold \( M = \Gamma\backslash\modelLieGroup \) with \( \Gamma_0 \neq \Gamma \) has a double cover \( \widehat{M} = \Gamma_0\backslash\modelLieGroup \) given by the partial quotient by the discrete subgroup \( \Gamma_0 \) of \(\modelLieGroup\) acting by left-multiplication.

\begin{remark}
	Recall from \cref{thm:critical-metric-Reeb-is-algebraic-Anosov} that the Reeb flow on a cosymplectic manifold with a critical metric is algebraic Anosov of \( \modelLieGroup \) type (up to reparametrization).
	The converse, however, is not true.
	Consider, for example, the mapping torus model from \cref{ex:critical-metric-mapping-torus} but glued by a matrix \( L \in \Liegroup{GL}(2, \mathbb{Z}) \) with determinant \( -1 \) and no eigenvalue of absolute value \( 1 \).
	Since \( L^{2} \in \Liegroup{SL}(2, \mathbb{Z}) \), the suspension flow on \( \mathbb{T}^{2}_{L^{2}} \) is algebraic Anosov by \cref{thm:Sol-covered=mapping-torus} (up to reparametrization) and is a double cover of the flow on \( \mathbb{T}^{2}_{L} \), so also the suspension flow there is algebraic Anosov of \( \modelLieGroup \) type (up to reparametrization).

	However, since \( L \) is orientation reversing, the mapping torus \( \mathbb{T}^{2}_{L} \) is not orientable and in particular cannot be a cosymplectic manifold.
	\label{rem:algebraic-Anosov-does-not-imply-Reeb-with-critical-metric}
\end{remark}

\subsection{Cosymplectic manifolds without critical metrics}
\label{sec:no-critical-metrics-construction}

With \cref{thm:critical-metric-iff-Sol-quotient} we have proven the classification part of \cref{thm:classification}: that a cosymplectic manifold admitting a critical metric is either co-Kähler or a mapping torus by a hyperbolic toral automorphism.
In this \namecref{sec:no-critical-metrics-construction} we include constructions of cosymplectic manifolds not admitting critical metrics for the Chern--Hamilton functional.

It turns out that \emph{formality}, a powerful tool to distinguish symplectic manifolds that admit Kähler structures from those that do not, plays a key role in the construction of cosymplectic manifolds that do not admit critical metrics.
Indeed, formality, which relates a manifold's rational homotopy type to its cohomology algebra, is obstructed by the presence of non-zero Massey products (confer \cite{BFM}).
Since a compact manifold admits a co-Kähler structure if and only if it is the mapping torus of a Hermitian isometry of a Kähler manifold~\cite{liTopologyCosymplecticCoKahler2008},
it follows that all compact co-Kähler manifolds are formal (see e.g., \cite{BFM2}).
This motivated the authors in \cite{BFM} to study formality in the broader context of cosymplectic manifolds, determining examples of cosymplectic manifolds which are not co-Kähler.
In particular, they prove in \cite[Proposition 20]{BFM} that there exist non-formal compact cosymplectic manifolds of dimension~\( m \geq 3 \) with first Betti number \( b_1 \geq 2 \).
Their construction is based on mapping tori of symplectomorphisms of surfaces \( \Sigma_k \) of genus \( k \geq 1 \), where the monodromy acts in a prescribed way on cohomology to ensure the existence of a non-trivial Massey product, establishing non-formality.
The construction in \cite{BFM} yields examples in dimension~\( 3 \) with first Betti number \( b_1 \geq 2 \), and higher-dimensional examples are obtained by taking products with spheres \( (S^2)^\ell \), for \( \ell \geq 0 \).

\begin{proposition}[\cite{BFM}]\label{thm:nonformal-cosymplectic-existence}
	There are non-formal compact manifolds with dimension~\( m \geq 3 \) and \( b_1 \geq 2 \) that admit cosymplectic structures.
	In particular, these manifolds admit no co-Kähler structures.
\end{proposition}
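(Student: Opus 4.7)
The plan is to produce explicit examples via symplectic mapping tori, following the strategy of \cite{BFM}. The starting point is a compact orientable surface $\Sigma_k$ of genus $k \geq 1$ with its natural symplectic (i.e., area) form $\omega$. I would seek a symplectomorphism $\phi\colon \Sigma_k \to \Sigma_k$ whose induced map $\phi^*$ on $H^1(\Sigma_k;\mathbb{R})$ has a fixed subspace of dimension at least one, and moreover which algebraically detects a triple product structure on cohomology (I will make this precise below). Given such a $\phi$, the symplectic mapping torus $(\Sigma_k)_\phi = \Sigma_k \times \mathbb{R}/(p, t+1)\sim (\phi(p), t)$ carries the cosymplectic structure $(\dif t, p_1^*\omega)$ from \cref{sec:preliminaries-cosymplectic-manifolds}. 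Higher-dimensional examples of dimension $3 + 2\ell$ with the same first Betti number are then obtained by taking the product $(\Sigma_k)_\phi \times (S^2)^\ell$, equipped with the cosymplectic structure $(\dif t, p_1^*\omega + \sum p_i^*\omega_{S^2})$, since taking products with compact Kähler (hence formal) manifolds preserves both the cosymplectic condition and the first Betti number.

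The first step is a Betti number computation. Using the Wang long exact sequence associated to the fibration $\Sigma_k \hookrightarrow (\Sigma_k)_\phi \to S^1$, one obtains $b_1((\Sigma_k)_\phi) = 1 + \dim\ker(\phi^* - \mathrm{id})$ on $H^1(\Sigma_k;\mathbb{R})$. Hence it suffices to arrange that $\phi^*$ has at least a one-dimensional fixed subspace on $H^1$, which gives $b_1 \geq 2$. For the product examples, the Künneth formula immediately yields the same $b_1$, since $H^1((S^2)^\ell) = 0$.

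The second, and main, step is producing non-formality via a non-trivial Massey product. The idea is to select closed $1$-forms $\eta_1, \eta_2$ on $\Sigma_k$ whose cohomology classes are $\phi^*$-invariant, together with a closed $1$-form on the mapping torus built from $\dif t$ and primitives of $\phi^*$-exact combinations. Concretely, if $[\xi]$ is a class on $\Sigma_k$ with $\phi^*\xi - \xi = \dif f$ for some function $f$, then $\xi - f\,\dif t$ descends to a closed $1$-form on $(\Sigma_k)_\phi$. Arranging matters so that appropriate cup products among such classes vanish at the cohomological level but their representing forms admit primitives with non-trivial combinations, one obtains a well-defined triple Massey product $\langle a, b, c \rangle$ in $H^2((\Sigma_k)_\phi)$ whose non-vanishing modulo indeterminacy obstructs formality. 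For the higher-dimensional examples, the Massey product persists under products with formal spaces.

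The hard part will be exhibiting the monodromy $\phi$ concretely and then verifying both the well-definedness of the Massey product (vanishing of the relevant cup products) and its non-triviality modulo the indeterminacy ideal. For genus $k \geq 2$ this can be achieved by choosing $\phi$ so that $\phi^*$ acts on a symplectic basis of $H^1(\Sigma_k;\mathbb{R})$ by a block-unipotent transformation pairing the $\phi^*$-invariant classes with the ones that become exact only after including a primitive. Once such a $\phi$ is constructed, the conclusion that $(\Sigma_k)_\phi$ admits no co-Kähler structure follows from the fact that compact co-Kähler manifolds are formal~\cite{BFM2}, while the cosymplectic structure is produced by the mapping torus construction of \cref{sec:preliminaries-cosymplectic-manifolds}.
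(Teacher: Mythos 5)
Your proposal follows exactly the construction the paper attributes to \cite{BFM}: mapping tori of surface symplectomorphisms with monodromy prescribed on $H^1$ so as to produce a non-trivial Massey product, products with $(S^2)^\ell$ for the higher-dimensional cases, and the formality of compact co-Kähler manifolds to rule out co-Kähler structures. The paper itself does not carry out this construction but simply cites \cite[Proposition 20]{BFM}, so your sketch and the paper's argument coincide; note only that the explicit monodromy and the verification of the non-trivial Massey product, which you defer as ``the hard part,'' are precisely the content the paper outsources to \cite{BFM}.
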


Now recall that the other type of manifolds admitting critical compatible metrics are mapping tori of \( \mathbb{T}^{2} \) glued by a hyperbolic toral automorphism \( L \in \Liegroup{SL}(2, \mathbb{Z}) \).
The diffeomorphism \( L \) induces a map \( L^* \) between the cohomology groups of \( \mathbb{T}^2 \), in particular, \( L^{*} \colon H^{1}(\mathbb{T}^2; \mathbb{R}) \to H^{1}(\mathbb{T}^2; \mathbb{R}) \) simply acts as the matrix \( L \).
Since a hyperbolic toral automorphism \( L \) does not have \( 1 \) as an eigenvalue, \( 1 - L \) is invertible.
This restricts the topology of the resulting mapping torus in the following way.

\begin{lemma}
	Let \( (S, \omega) \) be a connected symplectic surface and \( f \colon S \to S \) a symplectomorphism.
	If \( 1 - f^* \colon H^1(S) \to H^1(S) \) is an isomorphism, then \( H^1(S_f) \simeq H^0(S) \) and \( H^2(S_f) \simeq H^2(S) \).
	\label{thm:mapping-torus-H1}
\end{lemma}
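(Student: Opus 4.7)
The natural tool here is Wang's long exact sequence applied to the mapping torus $S_f$ viewed as a fiber bundle $S \hookrightarrow S_f \to S^1$ with monodromy $f$. This sequence has the form
\begin{equation*}
    \cdots \to H^{k-1}(S) \xrightarrow{1 - f^*} H^{k-1}(S) \to H^{k}(S_f) \to H^{k}(S) \xrightarrow{1 - f^*} H^{k}(S) \to \cdots
\end{equation*}
and reduces the computation of $H^*(S_f)$ to understanding the action of $f^*$ on $H^*(S)$.

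First, I would treat $H^1(S_f)$ by examining the segment
\begin{equation*}
    H^0(S) \xrightarrow{1 - f^*} H^0(S) \to H^1(S_f) \to H^1(S) \xrightarrow{1 - f^*} H^1(S).
\end{equation*}
Since $S$ is connected, $H^0(S) \simeq \mathbb{R}$ with $f^*$ acting as the identity, so $1-f^*$ vanishes on $H^0(S)$. By hypothesis, $1 - f^*$ is injective on $H^1(S)$, giving the short exact sequence $0 \to H^0(S) \to H^1(S_f) \to 0$, and thus $H^1(S_f) \simeq H^0(S)$.

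For $H^2(S_f)$, I would use the segment
\begin{equation*}
    H^1(S) \xrightarrow{1 - f^*} H^1(S) \to H^2(S_f) \to H^2(S) \xrightarrow{1 - f^*} H^2(S).
\end{equation*}
Surjectivity of $1-f^*$ on $H^1(S)$ kills the leftmost contribution. The key remaining observation is that $f$ being a symplectomorphism forces $f^*$ to act trivially on $H^2(S)$: if $S$ is closed, $H^2(S) \simeq \mathbb{R}$ is generated by the class $[\omega]$, which $f$ preserves by definition; if $S$ is open, $H^2(S) = 0$ for dimensional reasons. Either way $1-f^* = 0$ on $H^2(S)$ and the sequence collapses to $0 \to H^2(S_f) \to H^2(S) \to 0$.

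The main obstacle I anticipate is a purely expository one, namely correctly identifying the action of $f^*$ on $H^2(S)$ when $S$ is not assumed closed; other than this minor case distinction, the argument is a direct application of Wang's sequence and the invertibility hypothesis, with no geometric subtlety beyond the symplectic nature of $f$.
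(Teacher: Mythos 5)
Your proposal is correct and follows essentially the same route as the paper: the Wang exact sequence you invoke is precisely the long exact sequence the paper derives via Mayer--Vietoris for the mapping torus, and the two segment-by-segment arguments coincide. The only cosmetic difference is that you justify the triviality of \( 1-f^* \) on \( H^2(S) \) via preservation of \( [\omega] \) (with the separate open case), whereas the paper attributes it to connectedness; both are valid.
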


\begin{proof}
	By an application of the Mayer--Vietoris sequence to the mapping torus one obtains a long exact sequence in cohomology, which around \( H^1(S_f) \) looks like
	\begin{equation*}
		\dots
		\longrightarrow
		H^0(S)
		\overset{1-f^*}{\longrightarrow}
		H^0(S)
		\longrightarrow
		H^{1}(S_f)
		\longrightarrow
		H^1(S)
		\overset{1-f^*}{\longrightarrow}
		H^1(S)
		\longrightarrow
		\cdots
	\end{equation*}
	See for example~\cite[example 2.48]{hatcherAlgebraicTopology2001}.
	The assumption that \( 1-f^* \colon H^1(S) \to H^1(S) \) is an isomorphism implies that \( H^1(S_f) \) is a quotient of \( H^0(S) \).
	Since \( S \) is connected, the map \( 1-f^* \colon H^0(S) \to H^0(S) \) is \( 0 \), which implies that \( H^0(S) \rightarrow H^1(S_f) \) is injective, which leads to the claimed isomorphism \( H^0(S) \simeq H^1(S_f) \).
	The long exact sequence around \( H^2(S_f) \) is
	\begin{equation*}
		\dots
		\longrightarrow
		H^1(S)
		\overset{1-f^*}{\longrightarrow}
		H^1(S)
		\longrightarrow
		H^{2}(S_f)
		\longrightarrow
		H^2(S)
		\overset{1-f^*}{\longrightarrow}
		H^2(S)
		\longrightarrow
		\cdots
		.
	\end{equation*}
	The connectedness of \( S \) implies that \( 1-f^* \colon H^2(S) \to H^2(S) \) vanishes, so an argument like the one above gives that \( H^{2}(S_f) \simeq H^{2}(S) \).
\end{proof}

This \namecref{thm:mapping-torus-H1} implies the following result about the Betti numbers.

\begin{corollary}
	A compact connected cosymplectic \(3\)-manifold admitting a critical metric with positive torsion has all Betti numbers equal to \( 1 \).
	\label{thm:critical-metric-positive-torsion-Betti-numbers}
\end{corollary}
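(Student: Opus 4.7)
The plan is to combine \cref{thm:critical-metric-iff-Sol-quotient} with \cref{thm:mapping-torus-H1} and finish off with Poincaré duality. By \cref{thm:critical-metric-iff-Sol-quotient}, any compact connected cosymplectic \( 3 \)-manifold admitting a critical compatible metric with positive torsion is cosymplectomorphic to a mapping torus \( \mathbb{T}^{2}_{L} \) of the \( 2 \)-torus by a hyperbolic toral automorphism \( L \in \Liegroup{SL}(2, \mathbb{Z}) \) as in \cref{ex:critical-metric-mapping-torus}. The problem thus reduces to computing the Betti numbers of this explicit family of mapping tori.

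The main step will be verifying the hypothesis of \cref{thm:mapping-torus-H1} and then applying it. Since \( L \) is hyperbolic, its eigenvalues are \( \lambda \) and \( \lambda^{-1} \) with \( \abs{\lambda} > 1 \); in particular, \( 1 \) is not an eigenvalue of \( L \). The induced map \( L^{*} \) on \( H^{1}(\mathbb{T}^{2}; \mathbb{R}) \) has the same eigenvalues, so \( 1 - L^{*} \) is an isomorphism on \( H^{1}(\mathbb{T}^{2}; \mathbb{R}) \). Applying \cref{thm:mapping-torus-H1} with \( S = \mathbb{T}^{2} \) and \( f = L \) then yields \( H^{1}(\mathbb{T}^{2}_{L}; \mathbb{R}) \cong H^{0}(\mathbb{T}^{2}; \mathbb{R}) \cong \mathbb{R} \) and \( H^{2}(\mathbb{T}^{2}_{L}; \mathbb{R}) \cong H^{2}(\mathbb{T}^{2}; \mathbb{R}) \cong \mathbb{R} \), giving \( b_{1} = b_{2} = 1 \).

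To finish, \( b_{0} = 1 \) follows from the connectedness assumption. For \( b_{3} \), observe that the cosymplectic data \( (\alpha, \beta) \) provides the global volume form \( \alpha \wedge \beta \), so \( M \) is orientable; combined with compactness, Poincaré duality gives \( b_{3} = b_{0} = 1 \).

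The argument is essentially a short synthesis of results already in hand, so there is no serious obstacle; the only point requiring mention is the verification that hyperbolicity of \( L \) excludes \( 1 \) from its spectrum, which is immediate.
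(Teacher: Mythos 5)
Your proposal is correct and follows exactly the route the paper intends: the classification result reduces to the mapping torus $\mathbb{T}^2_L$, hyperbolicity of $L$ excludes $1$ from the spectrum so \cref{thm:mapping-torus-H1} applies to give $b_1=b_2=1$, and connectedness plus orientability (via the volume form $\alpha\wedge\beta$) and Poincaré duality handle $b_0$ and $b_3$. No issues.
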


\begin{corollary}
	There exist compact \( 3 \)-dimensional manifolds which admit cosymplectic structures, but such that no cosymplectic structure on these manifolds admits a critical compatible metric for the Chern--Hamilton functional.
	\label{thm:manifolds-with-no-critical-metrics-existence}
\end{corollary}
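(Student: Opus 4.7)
The plan is to combine the classification \cref{thm:classification} with the topological obstructions already established in this section. By \cref{thm:classification}, any compact connected $3$-dimensional cosymplectic manifold equipped with a critical compatible metric is either co-Kähler or cosymplectomorphic to a suspension cosymplectic structure on the mapping torus of $\mathbb{T}^{2}$ by a hyperbolic toral automorphism. Thus, to exhibit manifolds with no critical metrics (for any cosymplectic structure) it suffices to find a compact $3$-manifold that admits a cosymplectic structure but which topologically cannot satisfy either of these two alternatives.

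For the first alternative, I would invoke \cref{thm:nonformal-cosymplectic-existence}, which supplies compact $3$-manifolds with $b_{1} \geq 2$ that admit cosymplectic structures but are non-formal. Since every compact co-Kähler manifold is a mapping torus of a Hermitian isometry of a Kähler manifold by the result of Li quoted just before \cref{thm:nonformal-cosymplectic-existence}, every compact co-Kähler manifold is formal. Therefore the non-formal manifolds of \cref{thm:nonformal-cosymplectic-existence} admit no co-Kähler structure whatsoever, ruling out case~(1) of \cref{thm:classification} uniformly across all cosymplectic structures they may carry. The key point is that formality is a homotopy invariant of the underlying manifold, so it obstructs co-Kähler structures independently of which cosymplectic data one places on the manifold.

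For the second alternative, I would use \cref{thm:critical-metric-positive-torsion-Betti-numbers}, which asserts that every compact connected cosymplectic $3$-manifold admitting a critical metric with positive torsion has all Betti numbers equal to $1$. Since the manifolds produced by \cref{thm:nonformal-cosymplectic-existence} satisfy $b_{1} \geq 2$, no cosymplectic structure on them can give rise to a critical metric with positive torsion, ruling out case~(2) of \cref{thm:classification}. Again, this is a purely topological obstruction that does not depend on the choice of cosymplectic structure.

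Combining these observations yields the statement: any of the manifolds given by \cref{thm:nonformal-cosymplectic-existence} admits a cosymplectic structure, yet neither alternative in the dichotomy of \cref{thm:classification} can hold for any cosymplectic structure on it, so it admits no critical compatible metric for the Chern--Hamilton functional. The only delicate point in the argument is precisely the uniformity over all cosymplectic structures, and this is settled by the fact that both obstructions invoked---non-formality and $b_{1} \geq 2$---are topological invariants of the manifold itself.
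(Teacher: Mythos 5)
Your argument is correct and follows the same route as the paper: take the non-formal cosymplectic manifolds with $b_1 \geq 2$ from \cref{thm:nonformal-cosymplectic-existence}, rule out co-Kähler structures via formality, and rule out the positive-torsion case via the Betti number constraint of \cref{thm:critical-metric-positive-torsion-Betti-numbers}. Your explicit emphasis that both obstructions are topological invariants of the manifold (hence uniform over all cosymplectic structures) is a welcome clarification of a point the paper leaves implicit.
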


\begin{proof}
	The manifolds of \cref{thm:nonformal-cosymplectic-existence} admit cosymplectic structures but they do not admit any co-Kähler structures.
	Recall that a cosymplectic manifold with a critical metric is either co-Kähler or, by \cref{thm:critical-metric-positive-torsion-Betti-numbers}, has first Betti number \( b_{1} = 1 \).
	Since the manifolds of \cref{thm:nonformal-cosymplectic-existence} have \( b_{1} \geq 2 \) we conclude that they admit no cosymplectic structure admitting a critical metric.
\end{proof}

\begin{remark}
	In \cite{CLM} it is proved that the first Betti number of a co-Kähler manifold is always odd.
	This gives a different way of seeing that the manifolds of \cref{thm:nonformal-cosymplectic-existence} with \( b_{1} \) even cannot admit a co-Kähler structure, and thus also no critical compatible metric.
\end{remark}

\begin{remark}
	In \cite{BFM2}, other conditions such as the hard Lefschetz property are examined as distinguishing features of co-Kähler manifolds.
\end{remark}

\begin{example}
	An explicit example of the constructions in \cite{BFM} of non-formal cosymplectic manifolds is the mapping torus of \( \mathbb{T}^{2} \)  glued by the map
	\begin{equation*}
		L
		=
		\begin{pmatrix}
			1&	1	\\
			0&	1
		\end{pmatrix}
		.
	\end{equation*}
	Since \( \det L = 1 \), the manifold \( \mathbb{T}^{2}_{L} \) admits a suspension cosymplectic structure by equipping the torus with the standard symplectic structure.
	We can directly see that this manifold does not admit any co-Kähler structure.
	Indeed, the long exact sequence used in the proof of \cref{thm:mapping-torus-H1} shows that \( H^{1}(\mathbb{T}^{2}_{L}; \mathbb{R}) \simeq \mathbb{R}^{2} \), but recall that the first Betti number of a co-Kähler manifold is always odd~\cite{CLM}.
\end{example}

\subsection{Critical metrics globally minimize the energy}
\label{sec:positive-torsion-critical-metric-minimizes-energy}

The last claim in \cref{thm:classification} which remains to prove is that critical metrics minimize the energy functional.
In this \namecref{sec:positive-torsion-critical-metric-minimizes-energy} we will present a proof of this fact that is an adaptation of the one in \cite{mitsumatsuExistenceCriticalCompatible2025} to the cosymplectic setting.

Let \( (M, \alpha, \beta) \) be a cosymplectic manifold equipped with a critical metric \( g \), and let \( \tilde{g} \) be any compatible metric.
Our goal is then to show that \( E(\tilde{g}) \geq E(g) \).
As explained in the introduction to this \namecref{sec:proof} we only need to consider the case where the torsion \( \norm{\Liederivative{\Reebfield}g}^2 \) is a positive constant.
In this case there exists by \cref{thm:critical-metric-criterion-by-local-frame} a local \( g \)-orthonormal frame \( (\Reebfield, v_{+}, v_{-}) \) around every point with \( [\Reebfield, v_{\pm}] = \pm \mu v_{\pm} \) and \( [v_{+}, v_{-}] = 0 \), where \( \mu = 2^{-3/2}\norm{\Liederivative{\Reebfield}g} \).

The \( v_{\pm} \) span \( \ker\alpha \) which is \( \tilde g \)-orthogonal to the Reeb vector field, so we can write
\begin{equation*}
	\tilde g
	=
	\alpha \tensorproduct \alpha
	+
	q v_{+}^{\flat} \tensorproduct v_{+}^{\flat}
	+
	r(
		v_{+}^{\flat} \tensorproduct v_{-}^{\flat}
		+
		v_{-}^{\flat} \tensorproduct v_{+}^{\flat}
	)
	+
	p v_{-}^{\flat} \tensorproduct v_{-}^{\flat}
	,
\end{equation*}
where \( \flat \) denotes taking the metric dual of the vector field using the critical metric \( g \).
Moreover, one must have \( p q - r^{2} = 1 \) to have the \( \beta = g(\emptyslot, \phi \emptyslot) \) for some \( (1, 1) \)-tensor field \( \phi \) satisfying \( \phi^{2} = -\identityOperator + \alpha\tensorproduct\Reebfield \).

Note that although the vector fields \( v_{\pm} \) might not be defined globally, the functions \( p \), \( q \), and \( r \) are.
Indeed, since there locally only are two choices for the pair \( (v_{+}, v_{-}) \), the other being \( (-v_{+}, -v_{-}) \) (see the discussion in the proof of \cref{thm:vpm-Lie-brackets}), the \( 2 \)-tensor fields \( v_{+}^{\flat} \tensorproduct v_{+}^{\flat} \) (and so on) are independent of the choice of local frame.

One then calculates using \cref{thm:vpm-Lie-brackets,thm:LRphi-properties} that
\begin{equation*}
	\Liederivative{\Reebfield} v_{\pm}^{\flat}
	=
	(\Liederivative{\Reebfield} v_{\pm})^{\flat}
	+
	(\Liederivative{\Reebfield}g)(v_{\pm}, \emptyslot)
	=
	\pm \mu v_{\pm}^{\flat}
	+
	2g(h\phi v_{\pm}, \emptyslot)
	=
	\mp \mu v_{\pm}^{\flat}
	.
\end{equation*}
We can compactly write the matrices representing respectively \( \tilde g \) and \( \Liederivative{\Reebfield}{\tilde{g}} \) in the \( (\Reebfield, v_{-}, v_{+}) \) frame as
\begin{equation}
	[\tilde g]
	=
	\begin{pmatrix}
		1&	0&	0	\\
		0&	p&	r	\\
		0&	r&	q	\\
	\end{pmatrix}
	,\quad
	[\Liederivative{\Reebfield}{\tilde{g}}]
	=
	\begin{pmatrix}
		0&	0&	0\\
		0
		& \Reebfield(p) + 2\mu p
		& \Reebfield(r)
		\\
		0
		& \Reebfield(r)
		& \Reebfield(q) - 2\mu p
	\end{pmatrix}
	\label{eq:compatible-metric-matrix-representation-given-critical}
	,
\end{equation}
where \( \Reebfield(p) \) denotes the derivative of the function \( p \) in the direction of the Reeb vector field.
The energy is calculated as
\begin{equation*}
	\norm{\Liederivative{\Reebfield}\tilde g}^2
	=
	\trace\bigl(
		(
			[\tilde g]^{-1}
			[\Liederivative{\Reebfield}{\tilde{g}}]
		)^{2}
	\bigr)
	.
\end{equation*}
Using that \( pq-r^{2} = 1 \) and its derivative \( \Reebfield(p)q + \Reebfield(q)p = 2r\Reebfield(r) \) one can compute (similarly as to how it can be done for contact manifolds~\cite{mitsumatsuExistenceCriticalCompatible2025}) this trace as
\begin{equation}
	\norm{\Liederivative{\Reebfield}\tilde g}^2
	=
	8(1+r^{2}) \mu^{2}
	+
	4(
		q\Reebfield(p) - p\Reebfield(q)
	) \mu
	+
	2(
		\Reebfield(r)^{2} - \Reebfield(p)\Reebfield(q)
	)
	\label{eq:torsion-via-critical-metric-first-expansion}
	.
\end{equation}
The function \( p \) is positive, so one can replace \( q \) by \( p^{-1}(r^{2} + 1) \) and \( \Reebfield(q) \) by \( p^{-1}(\Reebfield(r^{2}) - (r^{2} + 1)\Reebfield(\ln p)) \).
The coefficient to \( \mu \) in \cref{eq:torsion-via-critical-metric-first-expansion} becomes
\begin{equation*}
	4\big(
		2(r^{2}+1)\Reebfield(\ln p)
		-
		2r\Reebfield(r)
	\big)
	=
	8r\big(
		r\Reebfield(\ln p) - \Reebfield(r)
	\big)
	+
	8\Reebfield(\ln p)
	,
\end{equation*}
while the term with no \( \mu \)-coefficients turns into
\begin{equation*}
	2\big(
		\Reebfield(r)^{2}
		-
		2r\Reebfield(r)\Reebfield(\ln p)
		+
		(r^{2}+1)\Reebfield(\ln p)^{2}
	\big)
	\\
	=
	2\big(
		\Reebfield(r) - r\Reebfield(\ln p))
	\big)^{2}
	+
	2\Reebfield(\ln p)^2
	.
\end{equation*}
Collecting the terms gives
\begin{equation*}
	\norm{\Liederivative{\Reebfield}\tilde g}^2
	=
	8\mu^2
	+
	2
	\Bigl(
		(2\mu r)
		+
		(r \Reebfield(\ln p) - \Reebfield(r))
	\Bigr)^2
	+
	2(\Reebfield(\ln p ))^2
	+
	8\mu \Reebfield(\ln p)
	.
\end{equation*}

Recall that \( \norm{\Liederivative{\Reebfield}g}^2 = 8\mu^{2} \).
Since the Reeb vector field is divergence free and \( \mu \) is constant (in fact, \( \Reebfield(\mu) = 0 \), which follows from \cref{thm:scalar-torsion-is-first-integral-for-Reeb-field}, would suffice), the integral of the term \( 8\mu \Reebfield(\ln p) \) vanishes, which lets us conclude that
\begin{equation*}
	E(\tilde g)
	-
	E(g)
	=
	\int_{M}
		\Big[2
		\Bigl(
			(2\mu r)
			+
			(r \Reebfield(\ln p) - \Reebfield(r))
		\Bigr)^2
		+
		2(\Reebfield(\ln p ))^2
	\Big ]\,\alpha\wedge\beta
	\geq
	0
	.
\end{equation*}
Since \( \tilde{g} \) was an arbitrary compatible metric this shows that the critical metric \( g \) minimizes the energy among all compatible metrics.

\printbibliography
%\bibliography{bibliography.bib}

\cleardoublepage

\appendix

\markboth{APPENDIX}{APPENDIX}

\section[Appendix]{Calculations for the critical point condition of the Chern--Hamilton energy functional}
\label{sec:critical-point-condition-calculations}

In this Section we carry out the calculations needed to characterize the critical compatible metrics for the Chern--Hamilton energy functional, as claimed in \cref{thm:Euler--Lagrange}.
This will be done by generalizing the calculations by Tanno~\cite{tannoVariationalProblemsContact1989} (who proved the equivalent of \cref{thm:Euler--Lagrange} for contact manifolds) to \( \Reebfield \)-invariant almost cosymplectic manifolds (\cref{def:R-invariant-almost-cosymplectic}) which include both contact and cosymplectic manifolds.
In particular, all the following results hold true in both cases.
If one assumes that the almost cosymplectic structure is contact, the calculations presented here recover the work of Tanno.

We will first show various properties of compatible metrics.
These will then be used to calculate the first variation of the Chern--Hamilton energy functional in \cref{sec:first-variation-calculation} and to compute the tangent space to the space of compatible metrics in \cref{sec:compatible-metrics-tangent-space}.

Let \( (M, \alpha, \beta) \) be an almost cosymplectic manifold of dimension~$2n+1\geq 3$ with Reeb vector field \( \Reebfield \).
If \( g_{t} \) is a curve of compatible metrics we write \( \phi_{t} \) for the associated curve of \( (1, 1) \)-tensor fields, that is,
\begin{equation*}
	\phi_{t}^2 = -\identityOperator + \alpha \tensorproduct \Reebfield
	,\quad\text{and}\quad
	\beta = g_{t}(\emptyslot, \phi_{t}\emptyslot)
	.
\end{equation*}
Let \( \nabla^t \) denote the Levi--Civita connection coming from the metric \( g_t \).
We will be doing computations in index notation and using Einstein summation convention, for instance
\begin{equation*}
	\norm{\Liederivative{\Reebfield}g_t}^2
	=
	g_t^{ir}g_t^{js}
	(\Liederivative{\Reebfield} g_t)_{ij}
	(\Liederivative{\Reebfield} g_t)_{rs}
	.
\end{equation*}

\subsection{Properties satisfied by compatible metrics}
\label{sec:compatible-metrics-properties}

In this Section we will prove some identities which are satisfied by metrics compatible with an almost cosymplectic structure.

\begin{lemma}
	\label{thm:compatible-metric-properties}
	Let \( g \) be a compatible metric on an almost cosymplectic manifold \( (M, \alpha, \beta) \) and \( \phi \) an associated \( (1, 1) \)-tensor field.
	Then,
	\begin{enumerate}
		\item If \( \Liederivative{\Reebfield} \alpha = 0 \), the integral curves of the Reeb field are geodesics: \( \nabla_\Reebfield \Reebfield = 0 \).
			\label{thm:Reebfield-parallel}

		\item \( \Liederivative{\Reebfield}\alpha = 0 \) if and only if \( \phi (\Liederivative{\Reebfield}\phi) + (\Liederivative{\Reebfield}\phi) \phi = 0 \).
			\label{thm:phi-L_Rphi-anticommutes}

		\item Assume that \( \Liederivative{\Reebfield}\alpha = 0 \) and \( \Liederivative{\Reebfield}\beta = 0 \).
			Then, \( (\Liederivative{R}g)(\phi \emptyslot, \emptyslot) = (\Liederivative{R}g)(\emptyslot, \phi \emptyslot) \).
			\label{thm:phi-is-L_Rg-symmetric}

		\item Assume that \( \Liederivative{\Reebfield}\alpha = 0 \), that \( \Liederivative{\Reebfield}\beta = 0 \), and that \( (\dif \alpha)(\phi \emptyslot, \emptyslot) + (\dif \alpha)(\emptyslot, \phi \emptyslot) = 0 \).
			Then both \( \nabla_{\phi\emptyslot}\alpha \) and \( (\nabla \alpha) \phi \) are symmetric \( 2 \)-tensor fields.
			\label{thm:nablaalphaphiCommutator}

		\item Assume that \( M \) is of dimension~\( 3 \) or that \( (\nabla \alpha)\phi \) is symmetric and that \( \Liederivative{\Reebfield}\alpha = 0 \) and \( \Liederivative{\Reebfield}\beta = 0 \).
			Then \( \nabla_\Reebfield \beta = 0 \), and equivalently \( \nabla_\Reebfield \phi = 0 \).
			\label{thm:nabla_Rbeta=0}
			\label{thm:nabla_Rphi=0}
	\end{enumerate}
	\begin{enumerate}[resume]
		\item If \( g_{t} \) is a curve of compatible metrics, then \( g'_{t} = -g'_{t}(\phi_t \emptyslot, \phi_t \emptyslot) \).
			\label{thm:2hphiIdentity}
	\end{enumerate}
\end{lemma}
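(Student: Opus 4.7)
The plan is to establish the six parts in sequence, each exploiting the defining algebraic relations $\alpha = g(\Reebfield, \emptyslot)$, $\phi^{2} = -\identityOperator + \alpha \tensorproduct \Reebfield$, $\beta = g(\emptyslot, \phi \emptyslot)$ together with the torsion-freeness and metric-compatibility of the Levi--Civita connection. Part~(1) comes from expanding $(\Liederivative{\Reebfield}\alpha)(X) = \Reebfield(g(\Reebfield, X)) - g(\Reebfield, [\Reebfield, X])$ and using $g(\Reebfield, \Reebfield) = 1$ together with the torsion-free identity $[\Reebfield, X] = \nabla_{\Reebfield} X - \nabla_{X}\Reebfield$ to obtain $(\Liederivative{\Reebfield}\alpha)(X) = g(\nabla_{\Reebfield}\Reebfield, X)$, after which non-degeneracy of $g$ yields the equivalence. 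Part~(2) follows from Lie-differentiating $\phi^{2} = -\identityOperator + \alpha \tensorproduct \Reebfield$ and noting $\Liederivative{\Reebfield} \Reebfield = 0$, which gives $\phi(\Liederivative{\Reebfield}\phi) + (\Liederivative{\Reebfield}\phi)\phi = (\Liederivative{\Reebfield}\alpha)\tensorproduct \Reebfield$.

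For part~(3), I would Lie-differentiate $\beta = g(\emptyslot, \phi\emptyslot)$ and use $(\Liederivative{\Reebfield} g)(\emptyslot, \Reebfield) = 0$ (a consequence of $\Liederivative{\Reebfield}\alpha = 0$) together with $\phi^{2} = -\identityOperator + \alpha\tensorproduct \Reebfield$ to reduce $\Liederivative{\Reebfield}\beta = 0$ to the identity $(\Liederivative{\Reebfield} g)(X, \phi Y) = -g(X, AY)$, where $A := \Liederivative{\Reebfield}\phi$; combining with $\phi A + A\phi = 0$ from~(2) shows that both $(\Liederivative{\Reebfield} g)(\phi X, Y)$ and $(\Liederivative{\Reebfield} g)(X, \phi Y)$ equal $-g(X, AY)$. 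Part~(4) then follows from the natural decomposition $(\nabla\alpha)(X, Y) = \tfrac{1}{2}(\Liederivative{\Reebfield} g)(X, Y) + \tfrac{1}{2}(\dif\alpha)(X, Y)$ induced by $\alpha = g(\Reebfield, \emptyslot)$: substituting this into the antisymmetrized expressions encoding the symmetries of $\nabla_{\phi\emptyslot}\alpha$ and $(\nabla\alpha)\phi$, the $\Liederivative{\Reebfield} g$ contributions vanish by~(3) while the $\dif\alpha$ contributions vanish by the assumed compatibility on $\dif\alpha$.

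The main obstacle is part~(5). I would first observe that $(\nabla_{\Reebfield}\beta)(X, Y) = g(X, (\nabla_{\Reebfield}\phi)Y)$, so the two vanishing assertions are equivalent by non-degeneracy. Computing $(\nabla_{\Reebfield} - \Liederivative{\Reebfield})\beta$ through the torsion-free identity and invoking $\Liederivative{\Reebfield}\beta = 0$ reduces $\nabla_{\Reebfield}\beta = 0$ to the identity $\beta(\nabla_{X}\Reebfield, Y) + \beta(X, \nabla_{Y}\Reebfield) = 0$; via $\beta = g(\emptyslot, \phi\emptyslot)$ and the $g$-antisymmetry of $\phi$, this is precisely the symmetry of the $2$-tensor $(\nabla\alpha)\phi$. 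Under the first hypothesis of~(5) this symmetry is assumed; in dimension~$3$, I would instead derive it directly from part~(3) together with the fact that $\ker\alpha$ is $2$-dimensional and $\phi$ acts there as an isometric complex structure, so that $\dif\alpha|_{\ker\alpha}$ is automatically a multiple of $\beta|_{\ker\alpha}$ and hence $\dif\alpha(X, \phi Y) + \dif\alpha(\phi X, Y)$ vanishes on $\ker\alpha$.

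Finally, part~(6) is obtained by differentiating the compatibility identity $g_{t}(X, Y) = g_{t}(\phi_{t} X, \phi_{t} Y) + \alpha(X)\alpha(Y)$ in $t$; differentiating $\beta = g_{t}(\emptyslot, \phi_{t}\emptyslot)$ separately gives $g_{t}(X, \phi'_{t} Y) = -g'_{t}(X, \phi_{t} Y)$, and substituting this into the cross terms using both the symmetry of $g'_{t}$ and the $g_{t}$-antisymmetry of $\phi_{t}$ collapses the identity to the claimed relation $g'_{t} = -g'_{t}(\phi_{t}\emptyslot, \phi_{t}\emptyslot)$.
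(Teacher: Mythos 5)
Your proposal is correct, and for parts (1)--(4) and (6) it follows essentially the same computations as the paper: part (4) is the paper's calculation repackaged through the splitting \( \nabla\alpha = \tfrac12\Liederivative{\Reebfield}g + \tfrac12\dif\alpha \) into symmetric and antisymmetric parts, and part (6) is identical. In part (3), note that the step from \( (\Liederivative{\Reebfield}g)(\phi X, Y) = -g(AX,Y) \) (which is what the identity and the symmetry of \( \Liederivative{\Reebfield}g \) directly give) to \( -g(X,AY) \) requires proving that \( A = \Liederivative{\Reebfield}\phi \) is \( g \)-symmetric; this does follow from the ingredients you list (the symmetry of \( A\phi \) coming from the symmetry of \( \Liederivative{\Reebfield}g \), the anticommutation \( \phi A + A\phi = 0 \), the antisymmetry of \( \phi \), and \( A\Reebfield = 0 \)), but it is not immediate and should be spelled out; the paper sidesteps this by instead deriving \( \Liederivative{\Reebfield}g = g(A\emptyslot,\phi\emptyslot) \) and conjugating by \( \phi \) directly. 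The one genuinely different step is the dimension-\(3\) case of part (5): the paper verifies \( \nabla_\Reebfield\beta = 0 \) by direct evaluation on a local orthonormal frame \( (X,\phi X,\Reebfield) \), whereas you observe that in dimension \(3\) the condition \( (\dif\alpha)(\phi\emptyslot,\emptyslot) + (\dif\alpha)(\emptyslot,\phi\emptyslot) = 0 \) is automatic — since \( \dif\alpha|_{\ker\alpha} \) is pointwise a multiple of the nondegenerate \( \phi \)-invariant form \( \beta|_{\ker\alpha} \), and \( \iota_\Reebfield\dif\alpha = \Liederivative{\Reebfield}\alpha = 0 \) takes care of the Reeb direction — so that part (4) applies and the general-dimension argument takes over. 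Your route is somewhat more unified, exhibiting the three-dimensional case as a special instance of the hypothesis of (4), at the cost of having to check the Reeb-direction contributions separately, which you should state explicitly.
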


\begin{remark}
	All the extra assumptions hold for \( \Reebfield \)-invariant almost cosymplectic structures and thus for both cosymplectic and contact manifolds.
	The above identities are thus a generalization of what is already known for contact manifolds, see for example~\cite[Lemma 1.1 and Equation (4.9)]{tannoVariationalProblemsContact1989}.
\end{remark}

\begin{proof}[Proof of \cref{thm:compatible-metric-properties}]
	In \cref{thm:Reebfield-parallel} we can write \( 0 = \Liederivative{\Reebfield}\alpha = \nabla_{\Reebfield}\alpha + \alpha(\nabla \Reebfield) \).
	But \( 
		2\alpha(\nabla \Reebfield)
		=
		2g(\Reebfield, \nabla \Reebfield)
		=
		\nabla \norm{\Reebfield}^2
		=
		0
		,
	\)
	since \( \norm{\Reebfield}^{2} = \alpha(\Reebfield) = 1 \).
	Thus \( \nabla_\Reebfield \alpha \) vanishes which implies that \( \nabla_\Reebfield \Reebfield \) does as well since \( \Reebfield \) is the metric dual of \( \alpha \).

	For \cref{thm:phi-L_Rphi-anticommutes}, calculate that
	\begin{equation*}
		(\Liederivative{\Reebfield}\phi) \phi
		+
		\phi (\Liederivative{\Reebfield}\phi)
		=
		\Liederivative{\Reebfield}(\phi^2)
		=
		\Liederivative{\Reebfield}(-1 + \alpha \tensorproduct \Reebfield)
		=
		(\Liederivative{\Reebfield}\alpha) \tensorproduct \Reebfield
		.
	\end{equation*}

	For \cref{thm:phi-is-L_Rg-symmetric}, combine the fact that \( \Liederivative{\Reebfield}g \) is symmetric with the assumptions that \( \Liederivative{\Reebfield}\alpha = 0 \) and \( \Liederivative{\Reebfield} \beta = 0 \) to get that
	\begin{equation*}
	\begin{split}
		\Liederivative{\Reebfield}g
		&=
		\Liederivative{\Reebfield}(
			g(\phi \emptyslot, \phi \emptyslot)
			+
			\alpha \tensorproduct \alpha
		)
		=
		\Liederivative{\Reebfield}
		\bigl(
			\beta(\phi \emptyslot, \emptyslot)
		\bigr)
		=
		(\Liederivative{\Reebfield}\beta)(\phi \emptyslot, \emptyslot)
		+
		\beta((\Liederivative{\Reebfield} \phi) \emptyslot, \emptyslot)
		\\
		&=
		g(
			(\Liederivative{\Reebfield}\phi) \emptyslot,
			\phi \emptyslot
		)
		.
	\end{split}
	\end{equation*}
	By \cref{thm:phi-L_Rphi-anticommutes}, \( \phi \) anticommutes with \( \Liederivative{\Reebfield}\phi \), so this and the antisymmetry of \( \phi \) imply the claim.

	For \cref{thm:nablaalphaphiCommutator}, the assumptions mean that \cref{thm:phi-is-L_Rg-symmetric} holds, so
	\begin{equation*}
	\begin{split}
		0
		&=
		\bigl(
			(\dif\alpha)(\phi X, Y)
			+
			(\dif\alpha)(X, \phi Y)
		\bigr)
		-
		\bigl(
			(\Liederivative{R}g)(\phi X, Y)
			-
			(\Liederivative{R}g)(X, \phi Y)
		\bigr)
		\\
		&=
		\bigl(
			(\nabla_{\phi X}\alpha)Y
			-
			(\nabla_{Y}\alpha) \phi X
		\bigr)
		+
		\bigl(
			(\nabla_{X}\alpha)\phi Y
			-
			(\nabla_{\phi Y}\alpha) X
		\bigr)
		\\
		&-
		\bigl(
			(\nabla_{\phi X}\alpha)Y + (\nabla_{Y}\alpha) \phi X
		\bigr)
		+
		\bigl(
			(\nabla_{X}\alpha)\phi Y + (\nabla_{\phi Y}\alpha) X
		\bigr)
		\\
		&=
		2
		\bigl(
			(\nabla_{X} \alpha) \phi Y
			-
			(\nabla_{Y} \alpha) \phi X
		\bigr)
		.
	\end{split}
	\end{equation*}
	Adding the \( \Liederivative{\Reebfield}g \) terms to the \( \dif \alpha \) ones instead yields the other claim.

	For \cref{thm:nabla_Rbeta=0}, the equivalence of the two statements is simple to see: \( \nabla g = 0 \), so
	\begin{equation*}
		\nabla_{\Reebfield}\beta
		=
		\nabla_{\Reebfield} ( g(\emptyslot, \phi \emptyslot ))
		=
		(\nabla_{\Reebfield}g)(\emptyslot, \phi \emptyslot)
		+
		g(\emptyslot, (\nabla_\Reebfield\phi)\emptyslot)
		=
		g(\emptyslot, (\nabla_\Reebfield\phi)\emptyslot)
		.
	\end{equation*}
	This is essentially the fact that raising and lowering indices commutes with covariant derivatives.
	To prove that \( \nabla_{\Reebfield}\beta = 0 \) under the assumption that \( M \) has dimension~\( 3 \), pick some local orthonormal frame \( (X, \phi X, R) \).
	It is enough to verify that \( \nabla_\Reebfield\beta \) vanishes on this frame.

	Since \( \nabla_{\Reebfield} \Reebfield = 0 \) by \cref{thm:Reebfield-parallel} and \( \iota_\Reebfield\beta = 0 \), also \( \iota_\Reebfield \nabla_{\Reebfield} \beta = 0 \).
	Moreover, \( \nabla_\Reebfield \beta \) is antisymmetric, so we now only need to calculate \( (\nabla_\Reebfield\beta)(X, \phi X) \):
	\begin{multline*}
		(\nabla_\Reebfield\beta)(X, \phi X)
		=
		\Reebfield(g(X, \phi^2 X))
		-
		g(\nabla_\Reebfield X, \phi^2 X)
		-
		g(X, \phi \nabla_\Reebfield (\phi X))
		\\
		=
		-\Reebfield(g(X, X))
		+
		g(\nabla_\Reebfield X, X)
		+
		g(\phi X, \nabla_\Reebfield (\phi X))
		=
		0
		,
	\end{multline*}
	since \( 0 = R(g(X, X)) = 2g(\nabla_\Reebfield X, X) \), (and the same for \( \phi X \)) since we chose a normalized frame.

	Now we prove it instead in arbitrary dimension with the assumption of symmetry of \( (\nabla \alpha)\phi \).
	We first use that \( 0 = \Liederivative{\Reebfield}\beta = \nabla_\Reebfield\beta + \beta(\nabla_\emptyslot \Reebfield, \emptyslot) + \beta(\emptyslot, \nabla_\emptyslot \Reebfield) \).
	Then we can rewrite:
	\begin{equation*}
		\beta(\nabla_X \Reebfield, Y)
		=
		g(\nabla_X \Reebfield, \phi Y)
		=
		Xg(\Reebfield, \phi Y)
		-
		g(\Reebfield, \nabla_X \phi Y)
		=
		-\alpha(\nabla_X \phi Y)
		.
	\end{equation*}
	We can calculate that
	\( 
		(\nabla_\emptyslot\alpha)(\phi \emptyslot)
		=
		\Liederivative{\emptyslot}(\alpha \circ \phi(\emptyslot))
		-
		\alpha(\nabla_\emptyslot(\phi \emptyslot))
	\).
	Since \( \alpha \circ \phi = 0 \), \cref{thm:nablaalphaphiCommutator} implies that \( \alpha(\nabla_\emptyslot (\phi \emptyslot)) \) is symmetric.
	This shows that \( \beta(\nabla_\emptyslot \Reebfield, \emptyslot) \) is symmetric, so since \( \beta \) is antisymmetric we finally conclude that \( \nabla_\Reebfield \beta = \Liederivative{\Reebfield} \beta = 0 \).

	The identity in \cref{thm:2hphiIdentity} comes from taking the time derivative of \( g_{t} = g_{t}(\phi_{t} \emptyslot, \phi_{t} \emptyslot) + \alpha\tensorproduct\alpha \)
	and using that \( \beta \) does not depend on time which implies that
	\(
		0
		=
		\beta'
		=
		g_{t}'(\emptyslot, \phi_{t} \emptyslot)
		+
		g_{t}(\emptyslot, \phi_{t}' \emptyslot)
	\).
	In particular, \( g_{t}'(\phi_{t} \emptyslot, \phi_{t} \emptyslot) = - g_{t}(\phi_{t} \emptyslot, \phi_{t}' \emptyslot) \), so
	\begin{equation*}
		\odfrac{}{t} g_{t}(\phi \emptyslot, \phi \emptyslot)
		=
		g_{t}'(\phi_{t} \emptyslot, \phi_{t} \emptyslot)
		+
		g_{t}(\phi_{t}' \emptyslot, \phi_{t} \emptyslot)
		+
		g_{t}(\phi_{t} \emptyslot, \phi_{t}'\emptyslot)
		-g_{t}'(\phi_{t}\emptyslot, \phi_{t}\emptyslot)
		.
	\end{equation*}
\end{proof}

\subsection{Calculating the first variation of the Chern--Hamilton energy functional}
\label{sec:first-variation-calculation}
Let \( (M, \alpha, \beta) \) be an \( \Reebfield \)-invariant almost cosymplectic manifold and let \( g_{t} \) be a curve of compatible metrics with \( \phi_t \) the \( (1, 1) \)-tensor fields associated to \( g_t \).
We now calculate the derivative of the Chern--Hamilton energy along this curve:
\begin{equation*}
	\odfrac{}{t}\
	\int
		\norm{\Liederivative{\Reebfield} g_{t}}^2
	\,
	\alpha\wedge\beta^n
	=
	2
	\int
	\Big(g_{t}(
		\Liederivative{\Reebfield} g_{t},
		\Liederivative{\Reebfield} g'_{t}
	)
	-
	g_{t}^{ir} (g'_{t})^{js} (\Liederivative{\Reebfield} g_{t})_{ij} (\Liederivative{\Reebfield} g_{t})_{rs} 
	\Big)\,
	\alpha\wedge\beta^n
	.
\end{equation*}
We show below (\cref{thm:energy-variation-cross-terms}) that the second term vanishes.
The first term becomes
\begin{equation*}
\begin{split}
	g_{t}\bigl(
		\Liederivative{\Reebfield} g_{t},
		\Liederivative{\Reebfield} g_{t}'
	\bigr)
	&=
	g_{t}\bigl(
		\Liederivative{\Reebfield} g_{t},
		\nabla^{t}_\Reebfield g_{t}' + g_{t}'(\nabla^{t}_{\emptyslot} \Reebfield, \emptyslot) + g_{t}'(\emptyslot, \nabla^{t}_{\emptyslot} \Reebfield)
	\bigr)
	\\
	&=
	\Reebfield\left( 
		g_{t}(\Liederivative{\Reebfield} g_{t}, g_{t}')
	\right)
	\\
	&-
	g_{t}(\nabla^{t}_\Reebfield \Liederivative{\Reebfield} g_{t}, g_{t}')
	+
	g_{t}\Bigl(
		\Liederivative{\Reebfield} g_{t},
		g_{t}'(\nabla^{t}_{\emptyslot} \Reebfield, \emptyslot)
		+
		g_{t}'(\emptyslot, \nabla^{t}_{\emptyslot} \Reebfield)
	\Bigr)
	.
\end{split}
\end{equation*}
One obtains the first equality by the definition of the Lie derivative of tensor fields combined with the fact that the Levi--Civita connection is torsion-free.
The identity \( X(g(S, T)) = g(\nabla_X S, T) + g(S, \nabla_X T) \) holds for vector fields \( X \) and tensor fields \( S \) and \( T \).
The identity holds when \( S \) and \( T \) are vector fields or \( 1 \)-tensor fields by definition, and for general tensor fields because \( \nabla_X \) satisfies a product rule with regards to tensor products, and because the inner product of a tensor product is the product of the inner products.
The second line is a divergence (because of the identity \( \divergence (fX) = f\divergence X + \dif f(X) \) and that \( \Reebfield \) is divergence free) and vanishes when taking the integral.

In \cref{thm:energy-variation-cross-terms} we compute that the last term equals
\(
	2
	g_{t}\bigl(
		(\Liederivative{R}g)
		(
			\emptyslot,
			\difalphasharp \emptyslot
		)
		,
		g'_{t}
	\bigr)
\) where \( \difalphasharp\) is defined by \( g(\emptyslot, \difalphasharp \emptyslot) = \dif\alpha \), in coordinates \( \tensor{(\difalphasharp)}{^i_j} := \tensor{\dif\alpha}{^i_j} \).
In conclusion we get that
\begin{equation*}
	\odfrac{}{t}\
	\int
		\norm{\Liederivative{\Reebfield} g_{t}}^2
	\,
	\alpha\wedge\beta^n
	=
	2\int_{M}\Big(
	-g_{t}\left( 
		\nabla^t_\Reebfield \Liederivative{\Reebfield} g_{t},
		g_{t}'
	\right)
	+
	2
	g_{t}\Bigl(
		(\Liederivative{R}g)
		\bigl(
			\emptyslot,
			\difalphasharp \emptyslot
		\bigr)
		,
		g'_{t}
	\Bigr)\Big)
	\,\alpha\wedge\beta^n
	.
\end{equation*}

What is missing now is calculating the two terms mentioned above.

\begin{lemma}
	Let \( (M, \alpha, \beta) \) be an \( \Reebfield \)-invariant almost cosymplectic manifold and let \( g_{t} \) be a path of compatible metrics with associated curve of \( (1, 1) \)-tensor fields \( \phi_{t} \).
	Then,
	\begin{align}
		g_{t}^{ir}(g'_{t})^{js}
		(\Liederivative{\Reebfield} g_{t})_{ij}
		(\Liederivative{\Reebfield} g_{t})_{rs}
		&=
		0
		,
		\label{eq:energyCrossterm1}
	\intertext{and}
	g_{t}(
		\Liederivative{\Reebfield} g_{t},
		g'_{t}(\nabla^{t} \Reebfield, \emptyslot)
		+
		g'_{t}(\emptyslot, \nabla^{t} \Reebfield)
	)
	&=
	2
	g_{t} \Bigl(
		(\Liederivative{\Reebfield} g)
		\bigl(
			\emptyslot,
			\difalphasharp \emptyslot
		\bigr)
		,
		g'_{t}
	\Bigr)
	.
		\label{eq:energyCrossterm2}
	\end{align}
	\label{thm:energy-variation-cross-terms}
\end{lemma}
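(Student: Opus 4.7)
The plan is to reduce both identities to trace identities in a local $g_t$-orthonormal frame adapted to the structure, exploiting the algebraic properties of compatible metrics collected in \cref{thm:compatible-metric-properties}. At each point of $M$, choose an orthonormal basis containing $\Reebfield$ and represent $L := \Liederivative{\Reebfield} g_t$, $g'_t$, $\phi_t$, and $\difalphasharp$ as matrices. The relevant structural properties are: both $L$ and $g'_t$ are symmetric and annihilate $\Reebfield$ in either slot, so they effectively act on $\ker\alpha$; on $\ker\alpha$ one has $\phi_t^2 = -\identityOperator$; combining the antisymmetry of $\phi_t$ with item~(3) of \cref{thm:compatible-metric-properties} yields $L\phi_t + \phi_t L = 0$; item~(6) becomes $\phi_t g'_t + g'_t \phi_t = 0$ on $\ker\alpha$; and the $\Reebfield$-invariance compatibility $\dif\alpha(\phi_t \emptyslot, \emptyslot) + \dif\alpha(\emptyslot, \phi_t \emptyslot) = 0$ reads $\phi_t \difalphasharp = \difalphasharp \phi_t$.

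For \cref{eq:energyCrossterm1}, contracting the metric factors in the orthonormal basis rewrites the left-hand side as $\trace(g'_t L^{2})$. Since $L$ anticommutes with $\phi_t$, the square $L^{2}$ commutes with $\phi_t$. Using $g'_t = \phi_t g'_t \phi_t$ on $\ker\alpha$, cyclicity of the trace, and $\phi_t^{2} = -\identityOperator$, one obtains
\begin{equation*}
	\trace(g'_t L^{2})
	=
	\trace(\phi_t g'_t \phi_t L^{2})
	=
	\trace(g'_t \phi_t L^{2} \phi_t)
	=
	\trace(g'_t L^{2} \phi_t^{2})
	=
	-\trace(g'_t L^{2}),
\end{equation*}
which forces $\trace(g'_t L^{2}) = 0$.

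For \cref{eq:energyCrossterm2}, start from the torsion-free identity $g_t(\nabla^{t}_Y \Reebfield, Z) = \tfrac{1}{2}[(\Liederivative{\Reebfield} g_t)(Y, Z) + \dif\alpha(Y, Z)]$, obtained by splitting the bilinear form $(Y, Z) \mapsto g_t(\nabla^{t}_Y \Reebfield, Z)$ into its symmetric and antisymmetric parts. In the orthonormal frame this gives the matrix identity $A = \tfrac{1}{2}(L - \difalphasharp)$ for the endomorphism $A \colon Y \mapsto \nabla^{t}_Y \Reebfield$. Expanding the right-hand tensor as $B = A^{T} g'_t + g'_t A$ and using the symmetry of $L$ to collapse the two summands, we obtain $g_t(L, B) = 2 \trace(L g'_t A)$. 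Substituting $A$ and applying \cref{eq:energyCrossterm1} to annihilate the $\trace(L^{2} g'_t)$ contribution reduces the expression to a multiple of $\trace(L g'_t \difalphasharp)$; the transpose identity $(L g'_t \difalphasharp)^{T} = -\difalphasharp g'_t L$ combined with cyclicity of the trace and the symmetries of $L$ and $g'_t$ then relates this to $\trace(L \difalphasharp g'_t) = g_t((\Liederivative{\Reebfield} g_t)(\emptyslot, \difalphasharp \emptyslot), g'_t)$, yielding the claimed identity.

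The main obstacle is the careful matrix bookkeeping: navigating the interplay between the symmetries of $L$ and $g'_t$, the antisymmetry of $\difalphasharp$, and the various (anti)commutation relations with $\phi_t$, so as to cleanly isolate the surviving trace with the correct coefficient. In the contact specialisation $\difalphasharp = \phi_t$, the additional relation $L\phi_t = -\phi_t L$ collapses several intermediate terms at once and recovers Tanno's original argument in~\cite{tannoVariationalProblemsContact1989} as a special case.
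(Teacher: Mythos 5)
Your proof of \cref{eq:energyCrossterm1} is correct, and it is a cleaner route than the paper's: instead of decomposing \( \Liederivative{\Reebfield}g_t \) into \( \nabla\alpha \) plus its transpose and invoking the symmetry of \( (\nabla\alpha)\phi_t \) (item (4) of \cref{thm:compatible-metric-properties}), you work directly with the anticommutation relations \( L\phi_t = -\phi_t L \) (from item (3)) and \( g'_t = \phi_t g'_t \phi_t \) (from item (6)) and let cyclicity of the trace finish the job. One point worth recording explicitly is that \( L^2\phi_t^2 = -L^2 \) holds exactly and not merely on \( \ker\alpha \), because \( L \) annihilates \( \Reebfield \), so the correction term \( \alpha\tensorproduct\Reebfield \) in \( \phi_t^2 \) drops out of the trace.

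For \cref{eq:energyCrossterm2} there is a genuine gap: the factor \( 2 \) on the right-hand side is never produced, and you do not notice that your chain of equalities terminates at half of the claimed answer. Carrying out your own steps, with \( A = \tfrac12(L - \difalphasharp) \) one gets
\begin{equation*}
	g_t(L, B)
	=
	2\trace(L g'_t A)
	=
	\trace(L g'_t L) - \trace(L g'_t \difalphasharp)
	=
	-\trace(L g'_t \difalphasharp),
\end{equation*}
and the transpose-plus-cyclicity step gives \( -\trace(L g'_t \difalphasharp) = \trace(L \difalphasharp g'_t) = g_t\bigl((\Liederivative{\Reebfield}g_t)(\emptyslot, \difalphasharp\emptyslot), g'_t\bigr) \), i.e.\ the right-hand side with coefficient \( 1 \), not \( 2 \). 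No further identity closes this factor: equality of the two sides as you have computed them would force \( \trace(L g'_t \difalphasharp) = 0 \), which already fails in the contact specialisation \( \difalphasharp = \phi_t \) with \( n = 1 \), where \( L \) and \( g'_t \) are arbitrary trace-free symmetric endomorphisms of the \( 2 \)-plane \( \ker\alpha \) and \( \trace(L g'_t \phi_t) \) is generically nonzero. The discrepancy originates in your splitting \( g_t(\nabla^t_Y \Reebfield, Z) = \tfrac12\bigl[(\Liederivative{\Reebfield}g_t)(Y,Z) + \dif\alpha(Y,Z)\bigr] \), which is the correct identity for the convention \( \dif\alpha(Y,Z) = Y\alpha(Z) - Z\alpha(Y) - \alpha([Y,Z]) \); the stated coefficient \( 2 \) is consistent only with the antisymmetric part of \( g_t(\nabla^t_{\emptyslot}\Reebfield, \emptyslot) \) being the whole of \( \dif\alpha \) rather than half of it, i.e.\ with the \( \tfrac12 \)-normalised exterior derivative used by Blair and Tanno. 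You must either adopt and justify that normalisation explicitly, or follow the paper's route, which never writes down \( A \) in closed form but instead generates the structure of the answer from \( 2g'_t = g'_t - g'_t(\phi_t\emptyslot, \phi_t\emptyslot) \) together with the symmetry of \( (\nabla\alpha)\phi_t \) — items (6) and (4) of \cref{thm:compatible-metric-properties}, neither of which your argument for \cref{eq:energyCrossterm2} uses. Note that in the cosymplectic case \( \difalphasharp = 0 \), so both sides vanish and your argument is unaffected; only the general \( \Reebfield \)-invariant (in particular the contact) case is at stake.
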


\begin{proof}
	Let us start with \cref{eq:energyCrossterm1}.
	We will need that
	\begin{equation}
		\iota_{\Reebfield} \Liederivative{\Reebfield} g
		=
		R(\alpha(\emptyslot))
		-
		g(\Liederivative{\Reebfield} \Reebfield, \emptyslot)
		-
		\alpha(\Liederivative{\Reebfield} \emptyslot)
		=
		\Liederivative{\Reebfield} \alpha
		=
		0
		.
		\label{eq:iRLRg=0}
	\end{equation}
	Now, using \cref{thm:compatible-metric-properties}, \cref{thm:2hphiIdentity} (adding \( g'_{t} \) on both sides of that identity to get \( 2g'_{t} = g'_{t} - g'_{t}(\phi_{t} \emptyslot, \phi_{t} \emptyslot) \)):
	\begin{multline}
	\label{eq:firstVariationFirstCrossTerm}
		2
		g_{t}^{ir}
		(g'_{t})^{js}
		(\Liederivative{\Reebfield} g_{t})_{ij}
		(\Liederivative{\Reebfield} g_{t})_{rs}
		\\
		=
		g_{t}^{ir}
		\bigl(
			(g'_{t})^{js}
			-
			(g'_{t})^{ab} \tensor{(\phi_t)}{^j_a} \tensor{(\phi_t)}{^s_b}
		\bigr)
		\bigl(
			(\nabla^{t}_i\alpha)_j
			+
			(\nabla^{t}_j\alpha)_i
		\bigr)
		\bigl(
			(\nabla^{t}_r\alpha)_s
			+
			(\nabla^{t}_s\alpha)_r
		\bigr)
		.
	\end{multline}
	Now let us reduce the term
	\begin{equation*}
		-g_{t}^{ir} (g'_{t})^{ab}
		\tensor{(\phi_t)}{^j_a}
		\tensor{(\phi_t)}{^s_b}
		\bigl(
			(\nabla^{t}_i\alpha)_j
			+
			(\nabla^{t}_j\alpha)_i
		\bigr)
		\bigl(
			(\nabla^{t}_r\alpha)_s
			+
			(\nabla^{t}_s\alpha)_r
		\bigr)
		.
	\end{equation*}
	Applying \cref{thm:nablaalphaphiCommutator} of \cref{thm:compatible-metric-properties} to all the four occurrences of \( \nabla\alpha \) composed with \( \phi_t \) gives
	\begin{equation*}
		-g_t^{ir}
		(g'_{t})^{ab}
		\tensor{(\phi_t)}{^j_i}
		\tensor{(\phi_t)}{^s_r}
		\bigl(
			(\nabla^{t}_a\alpha)_j
			+
			(\nabla^{t}_j\alpha)_a
		\bigr)
		\bigl(
			(\nabla^{t}_b\alpha)_s
			+
			(\nabla^{t}_s\alpha)_b
		\bigr)
		.
	\end{equation*}
	Then raise the \( i \) indices using \( g_t^{ir} \), move the upper \( j \)-indices down and vice versa, and use that \( \phi \) is antisymmetric to replace the resulting \( \tensor{\phi}{_j^r} \) by \( -\tensor{\phi}{^r_j} \) to get
	\begin{equation*}
		\begin{split}
		&\phantom{=-}(g'_{t})^{ab}
		\tensor{(\phi_t)}{^r_j}
		\tensor{(\phi_t)}{^s_r}
		\bigl(
			(\nabla^{t}_a\Reebfield)^j
			+
			( (\nabla^{t})^j\alpha)_a
		\bigr)
		(\Liederivative{\Reebfield}g)_{bs}
		\\
		&=
		-
		(g'_{t})^{ab}
		(
			\delta^s_j
			-
			\alpha_j
			\Reebfield^s
		)
		\bigl(
			(\nabla^{t}_a\Reebfield)^j
			+
			( (\nabla^{t})^j\alpha)_a
		\bigr)
		(\Liederivative{\Reebfield}g)_{bs}
		\\
		&=
		-
		(g'_{t})^{ab}
		(
			g^{is}
			\delta_i^j
			-
			\Reebfield^j
			\Reebfield^s
		)
		(\Liederivative{\Reebfield}g)_{aj}
		(\Liederivative{\Reebfield}g)_{bs}
		\\
		&=
		-
		g^{is} 
		(g'_{t})^{ab}
		(\Liederivative{\Reebfield}g)_{ai}
		(\Liederivative{\Reebfield}g)_{bs}
		,
	\end{split}
	\end{equation*}
	since \( \contractwith{\Reebfield} \Liederivative{\Reebfield}g = 0 \).
	This is exactly the negative of the other term in \cref{eq:firstVariationFirstCrossTerm} (after renaming indices), so we are done with proving \cref{eq:energyCrossterm1}.
	Note that if \( \alpha \) is closed, \( (\nabla_i\alpha)_j = (\nabla_j\alpha)_i \), which reduces the size of these expressions (but does not change the result).

	The right hand side of the other term (\cref{eq:energyCrossterm2}) can be slightly simplified:
	\begin{equation*}
		(\Liederivative{\Reebfield} g_{t})^{ij}
		\left( 
			(g'_{t})_{rj} (\nabla^{t}_i \Reebfield)^r
			+
			(g'_{t})_{ir} (\nabla^{t}_j \Reebfield)^r
		\right)
		=
		2
		(\Liederivative{\Reebfield} g_{t})^{ij}
		(g'_{t})_{ir} (\nabla^{t}_j \Reebfield)^r
		,
	\end{equation*}
	since both \( \Liederivative{\Reebfield} g_{t} \) and \( g'_{t} \) are symmetric.
	Using \cref{thm:compatible-metric-properties}, \cref{thm:2hphiIdentity}, as before, this can be written as
	\begin{equation}
		\label{eq:first-variation-second-cross-term-one-side}
		2
		(\Liederivative{\Reebfield} g_{t})^{ij}
		(g'_{t})_{ir} (\nabla^{t}_j \Reebfield)^r
		=
		(\Liederivative{\Reebfield} g_{t})^{ij}
		\bigl(
			(g'_{t})_{ir}
			-
			(g'_{t})_{ab}
			\tensor{(\phi_t)}{^a_i}
			\tensor{(\phi_t)}{^b_r}
		\bigr)
		(\nabla^{t}_j \Reebfield)^r
		.
	\end{equation}
	Now work on the second term.
	We use a slight variant of \cref{thm:nablaalphaphiCommutator} from \cref{thm:compatible-metric-properties}, namely that
	\( 
		\tensor{(\phi_t)}{^b_r}
		(\nabla^{t}_a \Reebfield)^r
		=
		\tensor{(\phi_t)}{^r_a}
		((\nabla^{t})^b \alpha)_r
	\),
	as well as \cref{thm:phi-is-L_Rg-symmetric} from the same \namecref{thm:compatible-metric-properties} to calculate
	\begin{equation*}
	\begin{split}
		-(g'_{t})_{ab}
		&\tensor{(\phi_t)}{^a_i}
		\tensor{(\phi_t)}{^b_r}
		(\nabla^{t}_j\Reebfield)^r
		(\Liederivative{\Reebfield} g_{t})^{ij}
		\\
		&=
		(g'_{t})_{ab}
		\tensor{(\phi_t)}{^j_i}
		\tensor{(\phi_t)}{^r_j}
		((\nabla^{t})^{b}\alpha)_rj
		(\Liederivative{\Reebfield} g_{t})^{ia}
		\\
		&=
		-(g'_{t})_{ab}
		(
			\delta^r_i
			-
			\alpha_i \Reebfield^r
		)
		((\nabla^{t})^{b}\alpha)_{rj}
		(\Liederivative{\Reebfield} g_{t})^{ia}
		\\
		&=
		-(g_t')_{ab}
		((\nabla^{t})^b\alpha)_i
		(\Liederivative{\Reebfield} g_{t})^{ia}
		,
	\end{split}
	\end{equation*}
	where again we use that \( \contractwith{\Reebfield} \Liederivative{\Reebfield}g_t = 0 \).
	Relabelling \( (a, b, i) \) to \( (i, r, j) \) and combining with the left term of \cref{eq:first-variation-second-cross-term-one-side}, the right hand side of \cref{eq:energyCrossterm2} becomes
	\begin{equation*}
		2(\Liederivative{\Reebfield}g)^{ij}
		(g'_{t})_{ir}
		\bigl(
			(\nabla^{t}_{j} \Reebfield)^r
			-
			((\nabla^{t})^r \alpha)_j
		\bigr)
		=
		2(\Liederivative{\Reebfield}g)^{ij}
		(g'_{t})_{ir}
		\tensor{\dif\alpha}{_j^r}
		,
	\end{equation*}
	which is just
	\(
		g_t\bigl(
			g'_{t}
			,
			(\Liederivative{\Reebfield}g)
			(
				\emptyslot,
				\difalphasharp \emptyslot
			)
		\bigr)
	\)
	as claimed.
\end{proof}

\subsection{The tangent space to the space of compatible metrics}
\label{sec:compatible-metrics-tangent-space}

In this Section we prove \cref{thm:compatible-metrics-tangent-space}, giving a description of the \( (2, 0) \)-tensor fields that lie tangent to curves of compatible metrics.
We also show that the tensor field
\begin{equation}
	H
	:=
	\nabla_\Reebfield \Liederivative \Reebfield g
	-
	2 (\Liederivative{\Reebfield} g)(
		\emptyslot,
		\difalphasharp \emptyslot
	)
	,
	\label{eq:dalphasarp-defined-compatible-metric-tangent}
\end{equation}
where \( g \) is a compatible metric, lies in this tangent space at \( g \).

Recall that the claim was that a \( (2, 0) \)-tensor field \( H \) is tangent to a curve \( g(t) \) of compatible metrics at \( t=0 \) if and only if it is symmetric, has \( \iota_{\Reebfield} H = 0 \), and has \( H(\phi \emptyslot, \emptyslot) = H(\emptyslot, \phi \emptyslot) \), where \( \phi \) is the \( (1, 1) \)-tensor field associated to \( g(0) \).

The proof of the claim for a general \( \Reebfield \)-invariant almost cosymplectic manifold follows the same steps as the proof for the contact case which was done by Tanno~\cite{tannoVariationalProblemsContact1989}.
We provide the generalization here for completeness, and in the notation used all along this manuscript.

\begin{proof}[Proof of \cref{thm:compatible-metrics-tangent-space}]
	First, let \( g(t) \) be a curve of compatible metrics with associated curve of \( (1, 1) \)-tensor fields \( \phi(t) \).
	Since all the \( g(t) \) are symmetric, also the \( g'(t) \) are, and \( \iota_\Reebfield g'(t) = 0 \), since
	\begin{equation*}
		\iota_\Reebfield g'(t)
		=
		\iota_{\Reebfield}
		\odfrac{}{t}
		g(t)(\emptyslot, \emptyslot)
		=
		\odfrac{}{t}
		\alpha
		=
		0
		.
	\end{equation*}
	Now let us verify that \( \phi \) is symmetric for \( g'(t) \).
	By \cref{thm:compatible-metric-properties}, \( g' = -g'(\phi \emptyslot, \phi \emptyslot) \), which implies that \( \phi \) is symmetric for \( g' \), since \( g' \) vanishes on \( \Reebfield \).

	Now for the other direction let \( g_0 \) be a compatible metric with associated \( (1, 1) \)-tensor field \( \phi_0 \) and let a symmetric tensor field \( H \) satisfying the two criteria be given.
	We will now construct a curve \( g(t) \) of compatible metrics with \( g'(0) = H \) and \( g(0) = g_0 \).
	Start by defining the \( (1, 1) \)-tensor field \( H^+ \) by \( g_{0}(\emptyslot, H^+ \emptyslot) = H \), and then let
	\begin{equation*}
		g(t) := g_0(e^{t H^+}\emptyslot, \emptyslot),
		\quad\text{and}\quad
		\phi(t) := \phi_0 e^{tH^+}
		.
	\end{equation*}
	There are no convergence problems with the operators \( e^{tH^+} \) since they are essentially defined by a matrix exponential at each tangent space of \( M \).
	Since \( H \) and thus \( H^+ \) are symmetric, we can write \( g(t) = g_0(e^{\frac{1}{2} tH^+}\emptyslot, e^{\frac{1}{2} tH^{+}} \emptyslot) \) to see that \( g(t) \) is positive and symmetric and thus actually a metric.
	Let us verify that it indeed defines a compatible metric.
	For this we need to show that \( \phi(t)^2 = -\identityOperator + \alpha \tensorproduct \Reebfield \) and that \( g(t)(\phi(t) \emptyslot, \phi(t) \emptyslot) + \alpha \tensorproduct \alpha = g(t) \).
	Since \( \phi_0 \) is symmetric for \( H \) and antisymmetric for \( g_0 \) it anticommutes with \( H^+ \).
	This implies that \( e^{tH^+}\phi_0 = \phi_0 e^{-tH^+} \).
	In particular, \( \phi(t)^2 = \phi_0^2 = -\identityOperator + \alpha \tensorproduct \Reebfield \).
	Combining the (anti)symmetries, we also see that
	\begin{equation*}
	\begin{split}
		g(t)(\phi(t) \emptyslot, \phi(t) \emptyslot)
		&=
		g_0(
			e^{th^+} \phi_0 e^{tH^+} \emptyslot,
			\phi_0 e^{tH^+} \emptyslot
		)
		=
		g_0(
			\phi_0 e^{tH^+} \emptyslot,
			\phi_0 \emptyslot
		)
		\\
		&=
		g(t)
		-
		\bigl(
			\alpha \circ e^{tH^+}
		\bigr)
		\tensorproduct \alpha
		=
		g(t)
		-
		\alpha \tensorproduct \alpha
		,
	\end{split}
	\end{equation*}
	where the last equality uses that \( \exp(tH^+) \Reebfield = \Reebfield \) since \( \contractwith{\Reebfield} H = 0 \).
	We have thus proven that \( g(t) \) is a curve of compatible metrics.
\end{proof}

\begin{lemma}
	Let \( g \) be a compatible metric on an \( \Reebfield \)-invariant almost cosymplectic manifold \( (M, \alpha, \beta) \).
	Then the expression \cref{eq:dalphasarp-defined-compatible-metric-tangent} defines an element in the tangent space to the space of compatible metrics at \( g \).
\end{lemma}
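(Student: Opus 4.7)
The plan is to verify the three conditions of \cref{thm:compatible-metrics-tangent-space} characterizing the tangent space to the space of compatible metrics at $g$: symmetry of $H$, $\iota_{\Reebfield} H = 0$, and $H(\phi\emptyslot, \emptyslot) = H(\emptyslot, \phi\emptyslot)$. I will verify each condition on the two summands of $H$ separately, relying on the identities collected in \cref{thm:compatible-metric-properties}.

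The Reeb-annihilation is immediate. For the first summand, $(\nabla_\Reebfield \Liederivative{\Reebfield}g)(\Reebfield, Y)$ vanishes using $\iota_\Reebfield \Liederivative{\Reebfield}g = \Liederivative{\Reebfield}\alpha = 0$ (from $\Reebfield$-invariance of $\alpha$) together with $\nabla_\Reebfield \Reebfield = 0$. The second summand vanishes when contracted with $\Reebfield$ by the same identity $\iota_\Reebfield \Liederivative{\Reebfield}g = 0$. The $\phi$-symmetry of the first summand is obtained by propagating the $\phi$-symmetry of $\Liederivative{\Reebfield}g$ through the covariant derivative along $\Reebfield$, using $\nabla_\Reebfield \phi = 0$. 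For the second summand, the compatibility condition $(\dif\alpha)(\phi\emptyslot, \emptyslot) + (\dif\alpha)(\emptyslot, \phi\emptyslot) = 0$ together with the $g$-antisymmetry of $\phi$ yields the commutation $\phi \difalphasharp = \difalphasharp \phi$; combining this with the $\phi$-symmetry of $\Liederivative{\Reebfield}g$ gives $(\Liederivative{\Reebfield}g)(\phi X, \difalphasharp Y) = (\Liederivative{\Reebfield}g)(X, \phi \difalphasharp Y) = (\Liederivative{\Reebfield}g)(X, \difalphasharp \phi Y)$.

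The main obstacle will be the symmetry of $H$, and specifically of its second summand. The first summand is automatically symmetric, since $\Liederivative{\Reebfield}g$ is symmetric and $\nabla_{\Reebfield}$ preserves symmetry. For the second, I plan to translate the question into operator form: letting $L$ denote the $g$-symmetric $(1,1)$-tensor with $\Liederivative{\Reebfield}g = g(\emptyslot, L\emptyslot)$, and noting that $\difalphasharp$ is $g$-antisymmetric (since $\dif\alpha$ is antisymmetric), the symmetry of $(\Liederivative{\Reebfield}g)(\emptyslot, \difalphasharp\emptyslot)$ is equivalent to the anticommutation $L\difalphasharp + \difalphasharp L = 0$. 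The $\phi$-symmetry of $\Liederivative{\Reebfield}g$ together with $\phi^{*} = -\phi$ gives $L\phi + \phi L = 0$, and combined with $\phi \difalphasharp = \difalphasharp \phi$ this shows that $L\difalphasharp + \difalphasharp L$ is itself $g$-antisymmetric and anticommutes with $\phi$, while both operators annihilate $\Reebfield$. The anticommutation then follows in the contact case ($\difalphasharp = \phi$) and cosymplectic case ($\difalphasharp = 0$) immediately, and more generally by a direct diagonalization argument on $\ker\alpha$, where $\phi$ interchanges the eigenspaces of $L$ corresponding to opposite eigenvalues.
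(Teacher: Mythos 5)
Your verification of the Reeb-annihilation and of the $\phi$-symmetry of both summands is correct and matches the paper's argument, as does the observation that the first summand is symmetric because $\nabla_{\Reebfield}$ preserves symmetry. The gap is in the symmetry of the second summand. You correctly reduce it to the anticommutation $L\difalphasharp + \difalphasharp L = 0$ (with $\Liederivative{\Reebfield}g = g(\emptyslot, L\emptyslot)$), but the facts you propose to deduce it from --- $L$ symmetric, $L\phi + \phi L = 0$, $\difalphasharp$ antisymmetric, $\phi\difalphasharp = \difalphasharp\phi$, both annihilating $\Reebfield$ --- do not imply it once $\dim M \geq 5$. Writing $V_\lambda$ for the eigenspaces of $L$, the anticommutation is equivalent to $\difalphasharp(V_\lambda) \subseteq V_{-\lambda}$, and your hypotheses only force $\difalphasharp$ to commute with the swap $\phi \colon V_\lambda \to V_{-\lambda}$, which is consistent with $\difalphasharp$ preserving each $V_\lambda$. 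Concretely, on $\ker\alpha \cong \mathbb{R}^4$ with $\phi$ the standard complex structure, $L$ acting as $\mathrm{diag}(1,-1)$ on each $\mathbb{C}$-factor and $\difalphasharp$ the antisymmetric swap $(v_1, v_2) \mapsto (v_2, -v_1)$ of the two factors satisfy all your listed conditions yet commute rather than anticommute. Your argument does close in dimension $3$ (where any $g$-antisymmetric endomorphism of the $2$-plane $\ker\alpha$ is a multiple of $\phi$, and anticommuting with $\phi$ forces it to vanish), and in the contact and cosymplectic cases, but the lemma is stated and used (via \cref{thm:Euler--Lagrange}) for $\Reebfield$-invariant almost cosymplectic manifolds of arbitrary dimension.

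The missing ingredient, which is how the paper proceeds, is a further identity relating $\difalphasharp$ to $\Liederivative{\Reebfield}\phi$: apply $\Liederivative{\Reebfield}$ to the compatibility condition \cref{eq:R-invariant-almost-cosymplectic-compatible-metric} and use $\Liederivative{\Reebfield}\dif\alpha = \dif\Liederivative{\Reebfield}\alpha = 0$ to obtain
\begin{equation*}
	(\dif\alpha)\bigl((\Liederivative{\Reebfield}\phi)\emptyslot, \emptyslot\bigr)
	+
	(\dif\alpha)\bigl(\emptyslot, (\Liederivative{\Reebfield}\phi)\emptyslot\bigr)
	=
	0
	,
\end{equation*}
which, since $h = \frac{1}{2}\Liederivative{\Reebfield}\phi$ is $g$-symmetric and $\difalphasharp$ is $g$-antisymmetric, says exactly that $h$ and $\difalphasharp$ anticommute. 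Combined with $L = 2h\phi$ and $\phi\difalphasharp = \difalphasharp\phi$ this yields $L\difalphasharp = 2h\phi\difalphasharp = 2h\difalphasharp\phi = -2\difalphasharp h\phi = -\difalphasharp L$, completing the symmetry of the second summand in every dimension. Inserting this one step makes your proof complete.
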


\begin{proof}
	We need to verify that
	\(
		H
		=
		\nabla_\Reebfield \Liederivative{\Reebfield}g
		-
		2(\Liederivative{\Reebfield}g)
		(
			\emptyslot,
			\difalphasharp \emptyslot
		)
	\)
	is symmetric and that both \( \iota_\Reebfield H = 0 \) and \( H(\phi \emptyslot, \emptyslot) = H(\emptyslot, \phi \emptyslot) \).
	By \cref{eq:iRLRg=0}, \( \iota_{\Reebfield} \Liederivative{\Reebfield} g = 0 \), so
	\begin{equation*}
		(\nabla_\Reebfield \Liederivative{\Reebfield} g)(R, \emptyslot)
		=
		\Reebfield(\iota_{\Reebfield} \Liederivative{\Reebfield}g)
		-
		(\Liederivative{\Reebfield}g)(\nabla_\Reebfield \Reebfield, \emptyslot)
		-
		(\Liederivative{\Reebfield}g)(\Reebfield, \nabla_{\Reebfield} \emptyslot)
		=
		0
		.
	\end{equation*}
	The other term of \( H \) also vanishes on \( \Reebfield \) since \( \iota_\Reebfield \dif \alpha = \Liederivative{\Reebfield}\alpha - \dif \iota_{\Reebfield}\alpha = 0 - \dif 1 = 0 \).

	For the \( \phi \)-symmetry recall that \cref{thm:compatible-metric-properties} tells us that \( \phi \) is symmetric for \( \Liederivative{\Reebfield}g \) and commutes with \( \nabla_{\Reebfield} \).
	Writing out
	\begin{equation*}
		(\nabla_{\Reebfield} \Liederivative{\Reebfield}g)(\phi \emptyslot, \emptyslot)
		=
		\Reebfield(
			(\Liederivative{\Reebfield}g)(\phi \emptyslot, \emptyslot)
		)
		-
		(\Liederivative{\Reebfield}{g})(\nabla_\Reebfield(\phi \emptyslot), \emptyslot)
		-
		(\Liederivative{\Reebfield}{g})(
			\phi \emptyslot,
			\nabla_{\Reebfield} \emptyslot
		)
	\end{equation*}
	we then see that indeed \( \phi \) is symmetric for \( \nabla_{\Reebfield} \Liederivative{\Reebfield}g \).
	For the other term, the assumption that \( \phi \) is antisymmetric for \( \dif\alpha \) means that it commutes with \( \difalphasharp \) since \( \phi \) is antisymmetric for \( g_0 \).
	Combining this with \( \phi \) being symmetric for \( \Liederivative{\Reebfield}g \) means that \( \phi \) is also symmetric for \( (\Liederivative{\Reebfield}g)(\emptyslot, \difalphasharp \emptyslot) \).

	For the symmetry of \( H \), its first term is symmetric since \( g \) is.
	For the second term, the Lie derivative of \( g = g(\phi \emptyslot, \phi \emptyslot) + \alpha \otimes \alpha \) implies (as in \cref{thm:LRphi-properties}) that \( \Liederivative{\Reebfield}g = g(\emptyslot, (\Liederivative{\Reebfield}\phi)\phi \emptyslot) \).
	Taking the Lie derivative of \cref{thm:phi-is-L_Rg-symmetric} of \cref{thm:compatible-metric-properties} and using that \( \alpha \) is \( \Reebfield \)-invariant shows that \( \difalphasharp \) and \( \Liederivative{\Reebfield}\phi \) anticommute.
	Combined with the (anti)symmetries and anticommutation properties from \cref{thm:compatible-metric-properties} as well as the commutativity of \( \difalphasharp \) and \( \Liederivative{\Reebfield}\phi \), we see that also the second term of \( H \) is symmetric.
\end{proof}

\end{document}